%% file: Eilenberg.tex
\documentclass[oneside,article,12pt]{memoir}
\usepackage[margin=2.5cm]{geometry}
\usepackage{amsthm}
\usepackage{amssymb}
\usepackage{amsmath}
\usepackage{mathtools}
\usepackage{faktor}
\usepackage{relsize}
\usepackage{graphicx,caption,color}
\usepackage[dvipsnames]{xcolor}
\usepackage{babel}
\usepackage{hyperref}
\usepackage{cite}
\usepackage{environ}
\usepackage{ifthen}
\usepackage{tikz}
\usepackage{tikz-cd}
\usepackage{tikz-3dplot}

\usetikzlibrary{calc}
\tdplotsetmaincoords{70}{30}
\usetikzlibrary{positioning}

\tikzset{every node/.append style={fill=white,font=\footnotesize,fill opacity=1,text opacity=1,draw opacity=1}}
\tikzset{roundnode/.style={circle, draw=darkgray, fill=blue!5, very thick,inner sep=0pt, minimum size=3mm}}

\counterwithout{section}{chapter}
\nobibintoc

\input{Makros}

\def\gref{\psi}
\def\cref{\hat{\psi}}
\def\tref{|\cref|}
\def\grefl{\lambda}
\def\crefl{\hat{\lambda}}

\def\grefr{\rho}
\def\crefr{\hat{\rho}}

\def\AA{\mathcal{A}}
\def\zero{\mathbf{0}}
\def\hidx{J}
\def\hcoord{T}
\def\chop{r}
\def\magic{\alpha}
\def\Hurewicz{h}
\def\relHurewicz{\bar{h}}
\def\classHurewicz{h_{cl}}

\input{MakroThmCounter}
\title{Eilenberg-MacLane spaces in A-homotopy theory}
\author{Georg Wille}
\date{\today}
\begin{document}

\maketitle

\begin{abstract}
    In this paper we study a discrete homotopy theory for undirected graphs.
    More precisely, we consider $A$-homotopy groups, first defined in the work of Barcelo, Kramer, Laubenbacher and Weaver.
    For every degree $n$ and every group $G$ (Abelian if $n\geq 2$), we construct a graph whose $n$-th $A$-homotopy group is isomorphic to $G$ and whose other $A$-homotopy groups all vanish. Using these, we show that for every infinite sequence of groups $G_1,G_2,\ldots$, with $G_i$ Abelian for $i\geq 2$, there is a graph $X$ such that the $i$-th $A$-homotopy group of $X$ is isomorphic to $G_i$ for all $i$. Along the way, we identify relative $A$-homotopy groups with relative classical homotopy groups, and prove a discrete version of the Blakers-Massey theorem. We then construct partial Postnikov systems and (full) Whitehead towers.
\end{abstract}
\tableofcontents*
\section{Introduction}
Regarding a graph as a one-dimensional CW-complex, topological invariants such as homotopy and homology groups offer little insight into the structure of the graph. In fact, every connected graph is homotopy equivalent to a wedge of circles. In recent years, several alternative homotopy
and homology theories for
graphs have been defined and studied. The results show
that they capture much
more detailed information about the graphs (see\cite{Hepworth:Magnitude,Yau:Path,Helene:Perspectives,Dochtermann:Cross}). In general however, their expressivity
is still wide open and
part of the motivation for the work in this paper. Here, we concentrate
on $A$-homotopy theory,
which is based on the theory of $Q$-analysis by Atkin
\cite{Atkin:I,Atkin:II}. More precisely, we consider $A$-homotopy groups introduced in \cite{Helene:Foundations} and discrete cubical homology introduced in \cite{Helene:Homology}. Only recently in \cite{Chris:Revisited}, it was shown that these notions of homology and homotopy groups are compatible in the sense that they satisfy a corresponding version of the Hurewicz theorem and it was shown that the $A$-homotopy groups of a graph $X$ are naturally isomorphic to the classical homotopy groups of a CW-complex $|N_1X|$.

The interesting question remains how rich the theory is, namely: ``Does every phenomenon observed in the classical topology have a counterpart in
$A$-homotopy theory?''. This question is formalized in the \fa{discrete homotopy hypothesis}, which states that the localization of the category of graphs along those maps which induce isomorphisms on $A$-homotopy groups is equivalent to the classical homotopy category, that is the category of CW-complexes with morphisms being homotopy classes of continuous maps. Recently, a major advancement towards this conjecture has been made in \cite{Chris:ntypes}, where it was shown that the localization along those graph maps, which induce isomorphisms on the first $n$ homotopy groups is equivalent to the category of CW-complexes localized at those maps that induce isomorphisms on the first $n$ classical homotopy groups. In particular, they show, that for every finite sequence of groups $G_1,\ldots ,G_n$, with $G_i$ Abelian for $i\geq 2$, there is a graph $X$ such that the $i$-th discrete homotopy group of $X$ is isomorphic to $G_i$ for $1\leq i\leq n$. A key observation of theirs is that the fact that $A$-homotopy groups of a graph $X$ are determined by the $(n+1)$-skeleton of $|N_1X|$, can be used to control the first $n$ $A$-homotopy groups via $(n+1)$-\fa{skeletal pushouts}.

This work is heavily influenced by their observation, however we offer a different perspective here by working with relative $A$-homotopy groups. Relative $A$-homotopy groups were also introduced in \cite{Helene:Foundations}, we will show that for a pair of graphs $(X,A)$, it holds that the relative $A$-homotopy groups of $(X,A)$ are isomorphic to the relative classical homotopy groups $\pi_*(|N_1X|,|N_1A|)$. We do this by directly constructing a group homomorphism $$\AA_*(X,*)\to \pi_*(|N_1X|,*)$$ and corresponding relative version of this. While the existence of such an isomorphism has been shown (in the non-relative case) in \cite{Chris:Revisited}, we provide an explicit description of this map, although the author believes that this description is unlikely to surprise experts. The isomorphism $\AA_*(X,A,*)\simeq \pi_*(|N_1X|,|N_1A|,*)$ then allows us to give an $A$-homotopy version of the Blakers-Massey theorem.

Our main result is to show the existence of \fa{Eilenberg-Mac\-Lane graphs}, that is, for every degree $n$ and every group $G$ (with $G$ Abelian if $n\geq 2$) there is a graph $X$ such that the $n$-th $A$-homotopy group of $X$ is isomorphic to $G$ and all other $A$-homotopy groups vanish. For $n=1$ we will give an explicit description of these very similar to Milnor's construction of classifying spaces $B\mathcal{G}$ for a topological group $\mathcal{G}$ (\cite{Milnor:BundlesI},\cite{Milnor:BundlesII}). To construct Eilenberg-MacLane graphs in higher degrees, we will show the existence of partial Postnikov systems, partial in the sense that the tower starts at a fixed $n$ and goes downwards. To the author's surprise however, this partial construction is enough to yield full Whitehead towers. Finally, we show that also for every infinite sequence of groups $G_1,G_2,\ldots$, Abelian for $i\geq 2$, there is a graph $X$ such that the $i$-th $A$-homotopy group of $X$ is isomorphic to $G_i$ for all $i$.

In Section 2 we will give an overview of the previously known results in $A$-homotopy theory that will be used in the rest of the paper. In section 3 we will prove the above mentioned isomorphism $\AA_*(X,A,*)\simeq \pi_*(|N_1X|,|N_1A|,*)$. Finally, in Section 4, we construct Eilenberg-MacLane graphs as well as partial Postnikov systems and (full) Whitehead towers.

\textbf{Acknowledgements:}
The author would like to thank H{\'e}l{\`e}ne Barcelo, Curtis Greene, Chris Kapulkin and Volkmar Welker for insightful discussions and their comments on an earlier version of this paper.

\section{Preliminaries}
We will give an overview of $A$-homotopy theory of graphs, including (almost) all of the results and definitions needed in the rest of the article. The author claims no originality on any of this.

Notational conventions: In this article, the letters $X,Y,Z$ will denote graphs and $A,B$ will denote subgraphs; groups will be denoted $G,M$. Sans-serif letters $\mathsf{X},\mathsf{Y}$ denote cubical sets.
For us, a graph $X=(V(X),E(X))$ is a set of vertices $V(X)$ with a set of edges $E(X)\subseteq \binom{V(X)}{2}$, i.e. a graph is a one-dimensional simplicial complex.

For a vertex $v\in V(X)$, the \fa{$1$-neighborhood of $v$ in $X$}, denoted by $\Umg{1}{v}$, is the smallest (not necessarily induced) subgraph containing $v$ and all edges $\{v,w\}\in E(X)$. Similarly, the \fa{$2$-neighborhood of $v$ in $X$}, denoted by $\Umg{2}{v}$, is the union of $\Umg{1}{w}$ for all vertices $w\in V(\Umg{1}{v})$.
\begin{definition}
    Let $X$ and $Y$ be graphs and $f:V(X)\to V(Y)$ a map between vertex sets. If $f$ satisfies that for all edges $\{x_1,x_2\}\in E(X)$ it holds that $f(x_1)=f(x_2)$ or $\{f(x_1),f(x_2)\}\in E(Y)$, then $f$ is called a \fa{graph map} and denoted $f:X\to Y$.
\end{definition}

The following examples of graphs play a central role in the theory; we use the notation $[m]=\{0,1,2,\ldots, m\}$:
\begin{example}
    \begin{itemize}
        \item The \fa{interval graph} $I_m$ with vertices $[m]$ and edges $\{k,k+1\}$ for all $k\in[m-1]$.
              \begin{center}

                  \begin{tikzpicture}
                      \tikzset{roundnode/.style={circle, draw=darkgray, fill=blue!5, very thick,inner sep=0pt, minimum size=5mm}}
                      \node[roundnode](0) at (0,0){\smaller 0};
                      \node[roundnode](1) at (1,0){\smaller 1};
                      \node[roundnode](2) at (2,0){\smaller 2};
                      \node[roundnode](m) at (4,0){\smaller m};
                      \node (dots) at (3,0){$\cdots$};
                      \draw (0.east) -- (1.west);
                      \draw (1.east) -- (2.west);
                      \draw (2.east) -- (dots);
                      \draw (dots) -- (m.west);
                  \end{tikzpicture}
              \end{center}

        \item The \fa{infinite interval graph} $I_\infty$ with vertices $\mathbb{Z}$ and edges $\{k,k+1\}$ for $k\in\ZZ$
              \begin{center}
                  \begin{tikzpicture}
                      \tikzset{roundnode/.style={circle, draw=darkgray, fill=blue!5, very thick,inner sep=0pt, minimum size=5mm}}
                      \node[roundnode](-2) at (0,0){\smaller -2};
                      \node[roundnode](-1) at (1,0){\smaller -1};
                      \node[roundnode](0) at (2,0){\smaller 0};
                      \node[roundnode](1) at (3,0){\smaller 1};
                      \node[roundnode](2) at (4,0){\smaller 2};
                      \draw(-2.east) -- (-1.west);

                      \draw (-1.east) -- (0.west);
                      \draw (0.east) -- (1.west);
                      \draw (1.east) -- (2.west);
                      \node (ldots) at (-1,0){$\cdots$};
                      \node (rdots) at (5,0){$\cdots$};
                      \draw (ldots)--(-2.west);
                      \draw (rdots)--(2.east);
                  \end{tikzpicture}
              \end{center}
        \item The \fa{$m$-cycle graph} $C_m$ with vertices $[m-1]$ and edges $\{k,k+1\}$ for all $k\in[m-2]$ as well as one edge connecting $0$ and $m-1$
              \begin{center}
                  \begin{tikzpicture}
                      \tikzset{roundnode/.style={circle, draw=darkgray, fill=blue!5, very thick,inner sep=0pt, minimum size=5mm}}
                      \node[roundnode] (0) at (30:1cm) {\smaller 0};
                      \node[roundnode] (1) at (90:1cm) {\smaller 1};
                      \node[roundnode] (2) at (150:1cm) {\smaller 2};
                      \node[roundnode] (3) at (210:1cm) {\smaller 3};
                      \node (4) at (270:0.9cm) {\dots};
                      \node[roundnode] (5) at (330:1cm) {\smaller m-1};
                      \draw (0)--(1)--(2)--(3)--(4)--(5)--(0);
                  \end{tikzpicture}
              \end{center}
    \end{itemize}
\end{example}
There are several products on the category of graphs, we will need the following.
\begin{definition} Let $X$ and $Y$ be graphs.
    \begin{itemize}
        \item The \fa{disjoint union of graphs} $X\sqcup Y$ has both vertices and edges the disjoint union of the vertices and edges of $X$ and $Y$ respectively.
        \item The \fa{box product} $X \bprod Y$ is the graph with vertices $V(X\bprod Y)=V(X)\times V(Y)$ and edges
              \begin{align*}
                  \big\{ \left\{ (x, y_1), (x, y_2) \right\} \,\big|\, x \in V(X), \left\{ y_1, y_2 \right\} \in E(Y) \big\}\cup\big\{ \left\{ (x_1, y), (x_2, y) \right\} \,\big|\, \left\{ x_1, x_2 \right\} \in E(X), y \in V(Y) \big\}
              \end{align*}
        \item The \fa{categorical product} $X\times Y$ is the graph containing all vertices and edges like $X \bprod Y$ with the additional edges
              \begin{align*}
                  \big\{ \left\{ (x_1,y_1),(x_2,y_2) \right\} \,\big|\,\left\{ x_1, x_2 \right\} \in E(X),\left\{ y_1, y_2 \right\} \in E(Y) \big\}
              \end{align*}
    \end{itemize}
\end{definition}
The following figure illustrates the different products.
\medskip

\begin{minipage}[t]{0.32\textwidth}
    \centering
    \begin{tikzpicture}
        \node[roundnode](0D) at ($(180:1cm)+(0,0.5)$) {};
        \node[roundnode](1D) at ($(60:1cm)+(0,0.5)$) {};
        \node[roundnode](2D) at ($(300:1cm)+(0,0.5)$) {};
        \draw (0D) -- (1D) -- (2D) -- (0D);
        \node[roundnode](0I) at (2,-0.3) {};
        \node[roundnode](1I) at (2,0.8) {};
        \draw (0I) -- (1I);
    \end{tikzpicture}
    \captionof*{figure}{$C_3\sqcup I_1$}
\end{minipage}
\begin{minipage}[t]{0.32\textwidth}
    \centering
    \begin{tikzpicture}
        \node[roundnode](0) at ($(180:1cm)+(-1,-1.5)$) {};
        \node[roundnode](1) at ($(60:1cm)+(-1,-1.5)$) {};
        \node[roundnode](2) at ($(300:1cm)+(-1,-1.5)$) {};
        \node[roundnode](3) at (180:1cm) {};
        \node[roundnode](4) at (60:1cm) {};
        \node[roundnode](5) at (300:1cm) {};
        \draw (0) -- (1) -- (2) --(0);
        \draw (3) -- (4) -- (5) -- (3);
        \draw (0) -- (3);
        \draw (1) -- (4);
        \draw (2) -- (5);
    \end{tikzpicture}
    \captionof*{figure}{$C_3\bprod I_1$}
\end{minipage}
\begin{minipage}[t]{0.32\textwidth}
    \centering
    \begin{tikzpicture}
        \node[roundnode](0) at ($(180:1cm)+(-1,-1.5)$) {};
        \node[roundnode](1) at ($(60:1cm)+(-1,-1.5)$) {};
        \node[roundnode](2) at ($(300:1cm)+(-1,-1.5)$) {};
        \node[roundnode](3) at (180:1cm) {};
        \node[roundnode](4) at (60:1cm) {};
        \node[roundnode](5) at (300:1cm) {};
        \draw (0) -- (1) -- (2) --(0);
        \draw (3) -- (4) -- (5) -- (3);
        \draw (0) -- (3);
        \draw (1) -- (4);
        \draw (2) -- (5);
        \draw (0) -- (4) -- (2) -- (3)--(1)--(5)--(0);
    \end{tikzpicture}
    \captionof*{figure}{$C_3\times I_1$}
\end{minipage}
\medskip

With all of these products the category of graphs can be made into a monoidal category (see \cite{Monoidalproducts}); in particular all of these are associative up to natural isomorphism. For a graph $X$ we will use $X^n$ to denote the $n$-fold box product of $X$ with the convention that $X^0=I_0$, the graph with one vertex. To justify this convention observe that $X\bprod I_0 \cong X$.
\begin{definition}
    For natural numbers $m,n$, denote by
    \begin{itemize}
        \item a \fa{facet} $\del_i^{\ve}(I_m^n)$, for $\ve\in\{0,1\}$ and $1\leq i\leq n$, i.e. the induced subgraph of $I_m^n$ with vertices
              \[
                  \{(v_1,\ldots, v_n)\in V(I_m^n)\mid v_i= \ve\cdot m\},
              \]
        \item $\del I_m^n$ the \fa{boundary of the cube $I_m^n$}, that is the subgraph induced by the set of vertices
              \[
                  \{(v_1,\ldots, v_n)\in V(I_m^n)\mid v_i=0 \text{ or } v_i=m \text{ for some }1\leq i\leq n \},
              \]
              put differently
              \[
                  \del I_m^n= \bigcup_{i=1}^n\del_i^0(I_m^n)\cup \del^1_i(I_m^n)
              \]
        \item and $\hg{m}{n}=\del I_m^{n-1}\bprod I_m \cup I_m^{n-1}\bprod\{0\}$ the \fa{$n$-horn}.
    \end{itemize}
\end{definition}
When we take quotients, like in the following definition, by identifying vertices via an equivalence relation, there will be an edge between two vertices whenever there is an edge between two representatives.
\begin{definition}
    Given a graph $X$, let the $m$-\fa{suspension} of $X$, denoted by $\Sigma_m X$, be the graph
    \[
        \Sigma_m X \letbe (X\bprod I_m) / \sim,
    \]
    where $\sim$ is the equivalence relation generated by $(x,0)\sim (y,0)$ and $(x,m)\sim(y,m)$ for all $x,y\in X$. Let $\Sigma_m^nX$ denote the $n$-\fa{fold $m$-suspension} of $X$, i.e. $\Sigma_m^nX = \Sigma_m(\Sigma_m^{n-1}X)$.
\end{definition}

\subsection{Discrete homotopy groups}
\begin{definition}~
    Let $X,Y$ be graphs.
    \begin{itemize}
        \item Let $f,g\from X\to Y$ be graph maps. A graph map $H:X\bprod I_m \to Y$ is called a \fa{homotopy from $f$ to $g$} if for all $x\in X$ it holds that
              \[
                  H(x,0)=f(x)\text{ and } H(x,m)=g(x).
              \]
        \item For graphs $X$ and $Y$ and subgraphs $A\subset X$ and $B\subset Y$, a map of pairs $$f:(X,A)\to (Y,B)$$ is a graph map $f:X\to Y$ such that $f(A)\subset B$.
        \item Let $*_X\in V(X)$. A graph map $f:I_\infty^n\to X$ is called \fa{almost everywhere constant (a.~e.~c.) with basepoint $*_X$} if all but finitely many vertices in $I_\infty^n$ get mapped to $*_X$.
        \item Given two almost everywhere constant maps $f,g:I_\infty^n\to X$ with basepoint $*_X$, an \fa{almost everywhere constant homotopy} is a homotopy $H:I_\infty^n\bprod I_m \to X$ from $f$ to $g$ such that all but finitely many vertices get mapped to $*_X$.
    \end{itemize}
\end{definition}

For all graphs $X$ and $Y$, homotopy induces an equivalence relation on the set of all graph maps $X \to Y$.
The pair $(X,*_X)$ of a graph $X$ and a vertex $*_X\in V(X)$ is called a \fa{based graph}.
If it is clear from the context with respect to which basepoint maps and homotopies are a.e.c, we will omit it from notation.

Given two almost everywhere constant maps $f,g:I_\infty^n\to X$, since all but finitely many vertices are mapped to $*_X$ there is an integer $M$ such that for all $(v_1,\ldots, v_n)\in I_\infty^n$ it holds that
\begin{itemize}
    \item if $v_1\geq M$ then $f(v_1,\ldots, v_n)=*_X$
    \item and if $v_1\leq -M$ then $g(v_1,\ldots, v_n)=*_X$.
\end{itemize}
Using the smallest such $M$, consider the \fa{concatenation} $f\cdot g$ as the following a. e. c. map
\begin{align*}
    f\cdot g \colon I_\infty^n & \la X                                                \\
    (v_1,v_2,\ldots, v_n)      & \lm \begin{cases}
                                         f(v_1+M,v_2,\ldots, v_n) & \text{ for } v_1\leq0 \\
                                         g(v_1-M,v_2,\ldots, v_n) & \text{ for } v_1\geq0
                                     \end{cases}
\end{align*}
If there are a. e. c. homotopies from $f,g$ to $f',g'$ respectively, then there is also a a. e. c. homotopy from $f\cdot g$ to $f'\cdot g'$. Concatenation of $f$ with the constant a. e. c. map yields $f$ up to precomposition with a translation and thus this concatenation is homotopic to $f$. Consider the map $-f$ defined as $-f(v_1,v_2\ldots v_n)\letbe f(M-v_1,v_2\ldots v_n)$, then $f\cdot -f$ is homotopic to the constant map.
All of this leads to the following definition.

\begin{definition}\label{dfn:htpygroups}
    For every $n\geq 1$, the \fa{$n$-th discrete homotopy group} is the set of equivalence classes of almost everywhere constant graph maps
    \begin{align*}
        \AA_n(X,*_X)=\left\{ f\colon I_\infty^n\to X \;\middle|\; f\text{ a. e. c. with basepoint } *_X \right\}/\sim
    \end{align*}
    where the equivalence relation $\sim$ is almost everywhere constant homotopy. For $n\geq 1$, the group multiplication is given by concatenation via
    \[
        [f]\cdot[g]=[f\cdot g]
    \]
    and the neutral element is the constant map with basepoint $*_X$.
    We also define $\AA_0(X,*_X)$ as the set of connected components of $X$, pointed by the component containing $*_X$. A sequence of pointed sets $(A,*_A) \xrightarrow{f} (B,*_B) \xrightarrow{g} (C,*_C)$ is called \fa{exact} at $B$ if $\im(f) = g^{-1}(*_C)$.
\end{definition}
Now, for $m\geq 1$ consider the graph maps
\begin{align*}
    \grefl_m: I_{m+1} & \la I_{m}       &  & \text{and  } & \grefr_m:I_{m+1} & \la I_m        \\
    t                 & \lm \max(0,t-1) &  &              & t                & \lm \min(m,t).
\end{align*}
and denote by $\cat{LR}_n$ the subcategory of $\Graph$ with objects $I_m^n$ and morphisms generated by the $n$-fold box product $\grefl_m^n=\grefl_m\bprod \ldots \bprod \grefl_m$ and $\grefr_m^n=\grefr_m\bprod \ldots \bprod \grefr_m$ for all $m$. For $n,m\geq 1$ denote with $\zero$ the vertex $(0,\ldots,0)\in I_m^n$.
\begin{proposition}
    Let $(X,*)$ be a based graph and $n\in\NN$.
    The following are naturally isomorphic as sets:
    \begin{enumerate}
        \item $$\AA_n(X,*)$$
        \item $$\left\{f:(I_m^n,\del I_m^n)\to(X,*) \, \middle\vert\, m\in\NN\right\}/\sim$$
        \item $$\left\{f:(\del I_m^{n+1},\zero)\to(X,*) \, \middle\vert\, m\in\NN\right\}/\sim$$
    \end{enumerate}
    where in the cases (2) and (3) the equivalence relation $\sim$ is generated by both homotopies that are also maps of pairs of graphs, and precomposition with the morphisms of $\cat{LR}_n$ and $\cat{LR}_{n+1}$ respectively. Note that in case (3) we identify $\grefl_m^{n+1}$ and $\grefr_m^{n+1}$ with their restrictions $\del I_{m+1}^{n+1}\to \del I_{m}^{n+1}$.
\end{proposition}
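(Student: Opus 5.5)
We construct explicit maps (1)$\to$(2)$\to$(3), check that each respects the equivalence relations, and then build inverses. For (1)$\to$(2): an a.e.c.\ map $f:I_\infty^n\to X$ is constant equal to $*$ outside some box $[-N,N]^n$. Translating by $N$ in every coordinate and restricting gives a map $(I_{2N+2}^n,\del I_{2N+2}^n)\to (X,*)$; the extra margin of one guarantees that the boundary is sent to $*$. Conversely, a map of pairs $(I_m^n,\del I_m^n)\to(X,*)$ extends by $*$ to an a.e.c.\ map on $I_\infty^n$. The extension is a graph map because every edge leaving the cube starts at a boundary vertex, which already maps to $*$. Two choices of $N$ for the same $f$ differ exactly by precomposition with iterated $\grefl^n$ and $\grefr^n$: restricting to a larger box and re-centering amounts to trimming a constant collar on the left or the right. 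Moreover, extension by $*$ is insensitive to precomposition with $\grefl_m^n,\grefr_m^n$ up to a translation, and translations are a.e.c.\ homotopic to the identity. To see this, note that the shift $v_1\mapsto v_1+1$ is connected to the identity by the one-step homotopy $H(v,0)=f(v)$, $H(v,1)=f(v_1+1,\ldots)$. This is a graph map on $I_\infty^n\bprod I_1$ because $\{v,v+e_1\}$ is an edge. Finally, a homotopy of pairs $I_m^n\bprod I_k\to X$, with the boundary sent to $*$ throughout, extends by $*$ to an a.e.c.\ homotopy. Conversely, an a.e.c.\ homotopy is supported in a finite box and restricts to a homotopy of pairs. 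Together these show that (1) and (2) agree.

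For (2)$\leftrightarrow$(3), use the standard identification of a cube modulo its boundary with a sphere, realized here as the boundary of a cube of one dimension higher. Given $f:(I_m^n,\del I_m^n)\to(X,*)$, define $g:\del I_m^{n+1}\to X$ by letting it be $f$ on the facet $\del_{n+1}^1(I_m^{n+1})\cong I_m^n$ and sending every other facet to $*$. Since the other facets meet that facet only along its boundary, where $f$ is $*$, this is a graph map, and $\zero\mapsto *$. Going back requires collapsing: $\del I_m^{n+1}$ minus the interior of one facet is a "cup" whose rim is the boundary of that facet. We need a graph map
\[
  \del I_{M}^{n+1}\la I_m^n
\]
that restricts, up to $\cat{LR}$-moves, to the identity on the top facet and pushes the cup onto $\del I_m^n$, where any $f$ takes the value $*$. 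The natural candidate is a radial projection. Write the boundary as the top facet together with the cup, and retract the cup onto the rim by the map $(v,t)\mapsto$ (the point of $\del I_m^n$ nearest $v$), which is constant in the height direction.

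This construction only lands in (2) for classes $g$ that are already $*$ on the cup. A general $g$ is merely based at the single vertex $\zero$, so the main step is to homotope $g$, relative to $\zero$ and after enlarging $m$ via $\grefl^{n+1}$ and $\grefr^{n+1}$, to a map that is $*$ on every facet except one. I expect this to be the main obstacle. The plan is a discrete "contract the contractible cup" argument. The cup $\del I_m^{n+1}\setminus \mathrm{int}\,\del^1_{n+1}$ deformation retracts, by graph maps, onto the vertex $\zero$: one can slide coordinates toward $0$ one step at a time using $\grefl$-type maps, each step being a homotopy of length one. We then extend this contraction to all of $\del I_m^{n+1}$. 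This is possible because $\del I^{n+1}$ is the union of the cup and the top facet along a rim, and the rim has a collar in the top facet. So we stretch the top facet, by precomposing with $\cat{LR}$-maps, to create a collar, and absorb the contraction of the rim into that collar.

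Once this normalization is in place, it remains to check two things. First, the two constructions are mutually inverse on classes. Second, homotopies and $\cat{LR}$-moves correspond under them. This is bookkeeping, since $\grefl^{n+1}$ and $\grefr^{n+1}$ restricted to the top facet are exactly $\grefl^n$ and $\grefr^n$. Naturality in based maps $X\to Y$ is immediate in all three descriptions, because every construction above is postcomposition-compatible.
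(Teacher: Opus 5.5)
Your route is genuinely different from the paper's, which gives no argument of its own. It cites \cite[4.6]{Chris:Revisited} for (1)$\cong$(2) and deduces (2)$\cong$(3) from \cite[3.22]{Kapulkin:Homotopygroups}, a statement about homotopy groups of cubical sets, so enlarging $m$ and the needed extension properties are handled abstractly (via $NX$). You instead work directly with graphs.

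Your treatment of (1)$\leftrightarrow$(2) is correct: restriction with a margin, extension by $*$, $\cat{LR}$-moves as padding by constant collars, and unit translations a.e.c.\ homotopic to the identity. So is the map sending $f$ to the map that is $f$ on $\del^1_{n+1}(I_m^{n+1})$ and $*$ elsewhere.

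Your normalization step is a hand-made homotopy extension property, and it does go through. After stretching by $\gref$-type clamps $c$, let $d(v)$ be the $\ell^\infty$-distance of $v$ from the outer rim of the enlarged top facet. Put $H_r(v)=g\bigl(h_{\max(0,r-d(v))}(c(v))\bigr)$ there, with $h_0=\mathrm{id}$ and $0\leq r\leq L$. An edge that changes $c(v)$ never changes $\min(d,L)$, so no diagonal adjacency is ever needed. This glues along the rim with $g\circ h_r\circ c$ on the cup. Your approach buys a self-contained argument with explicit representatives; the price is checking graph-map conditions by hand, which the cubical route avoids.

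Two steps need more than you give them, though both are fixable with your own tools.

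First, the contraction of the cup must be done in the right order and one coordinate at a time. Changing several coordinates in one step is not a homotopy on a box product. Lowering a side coordinate first moves a vertex $(m,v_2,\dots,v_{n+1})\in\del_1^1(I_m^{n+1})$ with all $0<v_j<m$ off $\del I_m^{n+1}$, so $g\circ h_s$ would not be defined. Lower the height first, via $(v,t)\mapsto(v,\max(0,t-s))$, which maps the cup into itself and ends on the bottom facet. Then contract the bottom facet coordinate by coordinate; this fixes $\zero$ throughout.

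Second, well-definedness of the inverse is not mere bookkeeping. A chain of homotopies and $\cat{LR}$-moves between two normalized maps passes through non-normalized ones, so you need two further facts:
\begin{enumerate}
\item the one-parameter version of the normalization, i.e.\ the same collar formula with the homotopy parameter carried along; its two ends are unchanged because they already vanish on the cup;
\item any two $\cat{LR}$-composites $\del I_M^{n+1}\to\del I_m^{n+1}$ are homotopic relative to $\zero$, by changing the clamp offset one coordinate at a time.
\end{enumerate}
With these additions your argument becomes a complete proof.
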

\begin{proof}
    That (1) and (2) are isomorphic is \cite[4.6]{Chris:Revisited},
    the fact that (2) and (3) are isomorphic follows from \cite[3.22]{Kapulkin:Homotopygroups}.
\end{proof}
It can be shown that if $X$ is connected, the homotopy groups do not depend, up to isomorphism, on the choice of the basepoint.
Given two based graphs $(X,*_X),(Y,*_Y)$ and a graph map $\vf:X\to Y$ we say the map is a \fa{based map} if $\vf(*_X)=*_Y$. To signify that a graph map has this property we write $\vf:(X,*_X)\to(Y,*_Y)$. In this setting, postcomposition with $\vf$ induces group homomorphisms
\begin{align*}
    \vf_*\from\AA_k(X,*_X) & \la \AA_k(Y,*_Y) \\
    [f]                    & \lm [\vf\circ f]
\end{align*}
\begin{definition}
    Let $X,Y$ be graphs and $\vf\from X\to Y$ a graph map.
    \begin{itemize}
        \item $\vf$ is called an $n$\fa{-equivalence} if for all $*_X\in V(X)$ the induced map $\vf_*\from \AA_k(X,*_X) \to \AA_k(Y,\vf(*_X))$ is an isomorphism for all $k< n$, and an epimorphism for $k=n$.
        \item $\vf$ is called a \fa{weak equivalence} if $\vf$ is an $n$-equivalence for all $n \geq 1$.
        \item $\vf$ is called a \fa{homotopy equivalence} if there is a map $\psi\from Y\to X$ such that $\psi\circ \vf$ and $\vf \circ \psi$ are homotopic to the identity map on $X$ and $Y$ respectively.
    \end{itemize}
\end{definition}

\begin{definition}
    A a. e. c. graph $(X,*)$ is called \fa{$n$-connected} if $\AA_k(X,*)=0$ for all $k\leq n$.
\end{definition}
\begin{lemma}\label{lem:colimithomotopygroups}
    Let $X_1,X_2,\ldots$ be a sequence of connected graphs with a family of inclusions of induced subgraphs $\iota_k: X_k \to X_{k+1}$, such that $\iota_k$ induces an isomorphism $\iota_{k*}:\AA_i(X_k) \to \AA_i(X_{k+1})$ for all $i\leq k$. Let $X_\infty$ be the colimit of this sequence. Then the universal cocone map $\hat{\iota}_k\from X_k\to X_\infty$ induces isomorphisms $\hat{\iota}_{k*}:\AA_i(X_k) \to \AA_i(X_\infty)$ for all $k\geq 1$ and $i\leq k$.
\end{lemma}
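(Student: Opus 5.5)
The plan is to use a compactness argument: every representative of a class in $\AA_i(X_\infty)$, and every homotopy between representatives, has finite support, so it already lives in some finite stage $X_N$. First, describe the colimit concretely. Since each $\iota_k$ is an inclusion of induced subgraphs, $X_\infty$ is the union $\bigcup_k X_k$. Its vertex set is the union of the vertex sets, and two vertices of $X_k$ are adjacent in $X_\infty$ exactly when they are adjacent in $X_k$. The cocone maps $\hat\iota_k$ are inclusions of induced subgraphs, and each $X_k$ is an induced subgraph of $X_N$ for all $N\geq k$. We fix a basepoint $*\in X_1$ and use it throughout.

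The key observation concerns a.e.c.\ maps. Let $f\colon I_\infty^i\to X_\infty$ be an a.e.c.\ map with basepoint $*$. Its image is finite, so it lies in some $X_N$. Because $X_N$ is induced in $X_\infty$, $f$ factors as a graph map $I_\infty^i\to X_N$. The same argument applies to an a.e.c.\ homotopy $I_\infty^i\bprod I_m\to X_\infty$: its image is finite, so it factors through some $X_N$. I will state this as a small claim. Any finite subset of $V(X_\infty)$ lies in some $V(X_N)$, and any graph map from a graph into $X_\infty$ with image contained in $V(X_N)$ is a graph map into $X_N$.

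Surjectivity of $\hat\iota_{k*}$ for $i\leq k$ goes as follows. Given $[f]\in\AA_i(X_\infty)$, choose $N\geq k$ with $f$ landing in $X_N$. The composite $X_k\to X_N$ is $\iota_{N-1}\circ\cdots\circ\iota_k$. Each factor $\iota_j$ with $j\geq k\geq i$ induces an isomorphism on $\AA_i$, so the composite is an isomorphism on $\AA_i$. Hence there is $g\colon I_\infty^i\to X_k$ whose class maps to $[f]\in\AA_i(X_N)$. Applying $\hat\iota_{N*}$ and using $\hat\iota_N\circ(\iota_{N-1}\circ\cdots\circ\iota_k)=\hat\iota_k$ gives $\hat\iota_{k*}[g]=[f]$.

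Injectivity of $\hat\iota_{k*}$ uses the same idea applied to homotopies. Suppose $\hat\iota_k\circ g$ is a.e.c.\ homotopic to the constant map in $X_\infty$. The homotopy factors through some $X_N$ with $N\geq k$, so $[g]$ maps to the trivial element of $\AA_i(X_N)$. Since $\AA_i(X_k)\to\AA_i(X_N)$ is an isomorphism, $[g]$ is trivial. For $i\geq1$ this argument uses that the induced maps are homomorphisms, so trivial kernel implies injectivity. For $i=0$ we instead compare two arbitrary classes $[g],[g']$: a homotopy between their images in $X_\infty$ factors through some $X_N$, and injectivity on $\AA_0$ at that stage gives $[g]=[g']$. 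The same two-class argument works for $\AA_0$ surjectivity, using paths (since all $X_k$ are connected, both sides are trivial anyway).

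I expect no step to be a real obstacle. The one point needing care is that the colimit in $\Graph$ of induced inclusions really is the union with no extra edges. This holds because an edge of the colimit must come from an edge in some $X_k$, and inducedness then places it in every later stage. I would verify this by checking the universal property directly.
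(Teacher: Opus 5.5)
Your proposal is correct and follows essentially the same approach as the paper. Finiteness of the images of a.e.c.\ maps and a.e.c.\ homotopies places everything inside some $X_N$, and the isomorphism $\AA_i(X_k)\to\AA_i(X_N)$ then gives injectivity (via trivial kernel) and surjectivity. Your explicit checks that each $X_N$ is induced in the union $X_\infty$, and your treatment of the case $i=0$, are details the paper leaves implicit.
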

\begin{proof}
    Let $i\geq 1$. We have to show that $\hat{\iota}_k\from X_k\to X_\infty$ induces isomorphisms of homotopy groups in degrees $i\leq k$. We show that $\hat{\iota}_{k*}:\AA_i(X_k) \to \AA_i(X_\infty)$ is injective and surjective separately. Firstly, consider a a. e. c. map $f:I_\infty^i\to X_k$ such that $\hat{\iota}_{k*}([f])=0$. Then there is a a. e. c. homotopy $H:I_\infty^i\bprod I_m\to X_\infty$ from $\hat{\iota}_k\circ f$ to the constant map. Since the image of $H$ is finite, there is an $N\geq k$ such that the image of $H$ is contained in $X_N$. Writing $\iota_{N,k} \letbe \iota_{N-1} \circ \dots \circ \iota_k : X_k \to X_N$, since $(\iota_{N,k})_*:\AA_i(X_k) \to \AA_i(X_N)$ is an isomorphism, we have that $[f]=0$ in $\AA_i(X_k)$ and thus $\hat{\iota}_{k*}$ is injective. Secondly, consider a a. e. c. map $f:I_\infty^i\to X_\infty$. Since the image of $f$ is finite, there is an $N\geq k$ such that the image of $f$ is contained in $X_N$. Since $(\iota_{N,k})_*:\AA_i(X_k) \to \AA_i(X_N)$ is an isomorphism, there is a a. e. c. map $g:I_\infty^i\to X_k$ such that $(\iota_{N,k})_*([g])=[f]$ in $\AA_i(X_N)$. Thus, we have that $\hat{\iota}_{k*}([g])=[f]$ in $\AA_i(X_\infty)$ and thus $\hat{\iota}_{k*}$ is surjective.
\end{proof}

The following is Theorem 2.8 from \cite{Helene:Foundations}:
\begin{theorem}[Seifert-van Kampen theorem for graphs]\label{thm:seifertvankampen}
    Let $(X,*)$ be a based connected graph and let $A, B$ be connected, induced subgraphs of $X$, such that $A\cap B$ is connected, $*\in A\cap B$ and $A\cup B=X$. Furthermore, let the distance between $A\setminus B$ and $B\setminus A$ be at least $3$. Denote by $\iota_A,\iota_B$ the inclusions of $A\cap B$ into $A$ and $B$ respectively, and by $j_A,j_B$ the inclusions of $A$ and $B$ respectively into $X$. Then, the following diagram is a pushout in the category of groups:
    \[
        \begin{tikzcd}
            \AA_1(A\cap B,*) \ar[r,"\iota_{A*}"] \ar[d,"\iota_{B*}"] & \AA_1(A,*) \ar[d,"j_{A*}"] \\
            \AA_1(B,*) \ar[r,"j_{B*}"] & \AA_1(X,*).
        \end{tikzcd}
    \]

\end{theorem}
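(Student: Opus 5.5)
The statement is Theorem 2.8 of \cite{Helene:Foundations}. I would prove it with the classical van Kampen scheme: show that $j_{A*}$ and $j_{B*}$ jointly generate $\AA_1(X,*)$, and then show that every relation among them already holds in the amalgamated product. The tool throughout is the distance hypothesis. Because $A\setminus B$ and $B\setminus A$ are at distance at least $3$, no edge of $X$ joins them. Moreover, no vertex of $A\cap B$ is adjacent to both of them, and since $A,B$ are induced, every edge of $X$ lies in $A$ or in $B$. I would work with the description of $\AA_1$ as homotopy classes of maps $(I_m,\del I_m)\to(X,*)$ modulo the $\cat{LR}_1$-relations, which the Proposition above makes available.

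\textbf{Surjectivity.} Take a loop $f\colon I_m\to X$ based at $*$. Cut $I_m$ into maximal consecutive segments whose images lie in $A$ and in $B$, with the cut points in $A\cap B$; this is possible because every edge lies in $A$ or $B$. For each cut point $f(t_k)\in A\cap B$, choose a path $\gamma_k$ inside $A\cap B$ from $*$ to $f(t_k)$, using that $A\cap B$ is connected. Inserting $\gamma_k\gamma_k^{-1}$ at each cut point does not change the class of $f$, since it is a homotopy combined with stretching by $\cat{LR}$-moves. This writes $[f]$ as a product of loops each lying in $A$ or in $B$.

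\textbf{Injectivity of the induced map $\AA_1(A)*_{\AA_1(A\cap B)}\AA_1(B)\to\AA_1(X)$.} Take a word in the two groups whose image is trivial, and realize it by a loop $f$. There is an a.e.c.\ homotopy $H\colon I_m\bprod I_\ell\to X$ from $f$ to the constant map, each row of which is a loop in $X$. I would then show that consecutive rows $H(-,s)$ and $H(-,s+1)$ give the same element of the amalgamated product, using their decompositions from the surjectivity step.

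The key local observation concerns adjacent vertices $H(t,s)$ and $H(t,s+1)$, which are equal or joined by an edge. At any vertex $t$ where one row is in $A\setminus B$, the other row is in $A$. So the decompositions of the two rows can be chosen with compatible cut points. Each small square $\{t,t+1\}\times\{s,s+1\}$ has image of diameter at most $2$, so the distance hypothesis places it entirely in $A$ or entirely in $B$. The two decompositions then differ by a sequence of moves, each occurring in a single factor. A move in $A$ or $B$ is a relation in $\AA_1(A)$ or $\AA_1(B)$. Re-choosing a connecting path $\gamma$ in $A\cap B$ changes a piece by conjugation with a loop in $A\cap B$, which is identified across the two factors. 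Hence every row represents the same element of the amalgamated product, and the last row is trivial. Besides the homotopies, I would also check that the $\cat{LR}$-moves $\grefl,\grefr$ respect the decomposition; this is clear, since they only repeat vertices.

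\textbf{Main obstacle.} The hard part is the bookkeeping in the injectivity step: the cut points of consecutive rows must be chosen consistently, and the connecting paths in $A\cap B$ must be handled uniformly so that the difference really is a product of factor-wise relations. This is exactly where the distance bound of $3$ is needed. First I would refine the decomposition so that each vertex of $I_m$ is assigned to $A$ or to $B$, namely $B$ exactly when $H(t,s)\in B\setminus A$, and $A$ otherwise. Then I would show by case analysis that every square falls into one of these two regions.
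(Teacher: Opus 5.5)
Your argument is correct in outline, and it takes a genuinely different route from the paper. The paper gives no proof of this statement; it imports it from \cite{Helene:Foundations}. Within the paper's own framework, the natural proof is the one used for \ref{thm:blakersmassey}. By \ref{magic}, $\AA_1(Y,*)\cong\pi_1(|N_1Y|,*)$ naturally in $Y$, and $\pi_1$ of a CW complex depends only on its $2$-skeleton. Every graph map $I_1^2\to X$ has image of diameter at most $2<3$, and $A$, $B$ are induced, so $|\sk_2N_1X|=|\sk_2N_1A|\cup|\sk_2N_1B|$ with intersection $|\sk_2N_1(A\cap B)|$, all three connected. The classical van Kampen theorem for CW subcomplexes then yields the pushout. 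That proof is a few lines long but depends on the comparison theorem of \cite{Chris:Revisited}. Yours is the discrete analogue of the textbook proof: cut a null-homotopy into small squares and sweep across them. It needs only elementary manipulations of paths. It also makes visible that the distance bound is used solely to force each square of the homotopy grid into $A$ or into $B$, which is the same fact that splits the $2$-skeleton in the topological argument.

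Two remarks on the bookkeeping you flag as the main obstacle. First, the vertex labelling in your last paragraph does not work as stated. A square may contain a vertex of $A\cap B$ (label $A$) and one of $B\setminus A$ (label $B$), so squares need not lie in a single labelled region. What you actually need is only your earlier observation that the image of each square lies in $A$ or in $B$.

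Second, that observation is enough to make ``compatible cut points'' precise. For rows $s$ and $s+1$, take as common cut points all columns $u$ with $H(u,s),H(u,s+1)\in A\cap B$, including the two constant end columns. Between consecutive such columns, all squares lie in one factor. Indeed, suppose a square of the strip is not contained in $B$ and a later one is not contained in $A$. Then the first square after the former that is not contained in $A$ lies in $B$, while its predecessor lies in $A$. Their common column is therefore another such column, a contradiction; the case with $A$ and $B$ exchanged is the same. Hence on each strip the two row pieces agree in that factor up to the vertical edges at its ends. These edges lie in the induced subgraph $A\cap B$. After attaching the connecting paths, they give loops in $A\cap B$, which cancel telescopically in the amalgamated product and are trivial at the end columns.
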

\subsection{Graph coverings}
Graph coverings in the sense most commonly appearing in combinatorics, namely graph maps that are local isomorphisms when restricted to $1$-neighborhoods, have been first considered in $A$-homotopy theory for graphs without triangles and squares in \cite{Hardeman:Graphcoverings}, and used in \cite{Helene:Vanishing} to show that these graphs have vanishing higher homology groups. For arbitrary graphs, Mavinkurve and Kapulkin observed in \cite{Udit:Fundamentalgroupoid} that to obtain a notion of graph covering that is more suitable for $A$-homotopy theory, one has to furthermore require a lifting property.   Consider the inclusion $\inc\from I_3\to I_1^2$, that maps $0\mapsto (0,0)$, $1\mapsto (1,0)$, $2\mapsto (1,1)$ and $3\mapsto (0,1)$.
\begin{center}
    \begin{tikzpicture}

        \node[roundnode] (a1) at (0,0) {};
        \node[roundnode] (a2) at (0,2) {};
        \node[roundnode] (a3) at (2,2) {};
        \node[roundnode] (a4) at (2,0) {};

        \draw (a1) -- (a2) -- (a3) -- (a4);

        \node[roundnode] (b1) at (5,0) {};
        \node[roundnode] (b2) at (5,2) {};
        \node[roundnode] (b3) at (7,2) {};
        \node[roundnode] (b4) at (7,0) {};

        \draw (b1) -- (b2) -- (b3) -- (b4) -- (b1);

        \draw[mono] (2.5,1) -- (4.5,1) node[midway, above] {$\eta$};

    \end{tikzpicture}
\end{center}

\begin{definition}\label{dfn:cov}

    A pointed graph map $p:(X,*_X)\to (B,*_B)$ is called a \fa{covering}, if
    \begin{itemize}
        \item for all $v\in X$ the restriction of $p$ to $\Umg{1}{v}$ gives rise to an isomorphism from $\Umg{1}{v}$ to $\Umg{1}{p(v)}$,
        \item and $p$ has the right lifting property with respect to $\eta$.
    \end{itemize}
    $p$ is called a \fa{regular covering} if the image of the induced map $p_*\from \AA_1(X,*_X)\to \AA_1(B,*_B)$ is a normal subgroup.
\end{definition}
\begin{proposition}
    Let $p:(X,*_X)\to (B,*_B)$ be a covering. Then there is a right action of $\AA_1(B,*_B)$ on $p^{-1}(*_B)$ given by lifting loops in $B$ starting at $*_B$ to paths in $X$ starting at vertices in $p^{-1}(*_B)$. We call this action the \fa{monodromy action}.
\end{proposition}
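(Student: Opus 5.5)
We need to show three things: every loop at $*_B$ has lifts with a well-defined endpoint, that endpoint depends only on the a.e.c. homotopy class of the loop, and the result is a right action.

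\textbf{Step 1: path lifting.} Work with representatives $f\colon (I_m,\del I_m)\to (B,*_B)$ of $\AA_1(B,*_B)$, using the identification of $\AA_1$ with classes of maps of pairs from the Proposition above. Let $x\in p^{-1}(*_B)$ and a path $\gamma\colon I_m\to B$ with $\gamma(0)=*_B$. We lift inductively. Put $\tilde\gamma(0)=x$. Given $\tilde\gamma(k)$, the step $\gamma(k)\to\gamma(k+1)$ is either constant or an edge of $\Umg{1}{\gamma(k)}$. Since $p$ restricts to an isomorphism $\Umg{1}{\tilde\gamma(k)}\to\Umg{1}{\gamma(k)}$, there is a unique $\tilde\gamma(k+1)$ in the $1$-neighborhood of $\tilde\gamma(k)$ with $p(\tilde\gamma(k+1))=\gamma(k+1)$. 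Hence the lift exists and is unique. Define $x\cdot[f]\letbe\tilde f(m)$, which lies in $p^{-1}(*_B)$.

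\textbf{Step 2: well-definedness.} We must check invariance under the two generators of the equivalence relation.

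\emph{Precomposition with $\grefl_m,\grefr_m$.} This only inserts a repeated vertex. The lift of $f\circ\grefl_m$ is $\tilde f\circ\grefl_m$ by uniqueness, so it has the same endpoint; the same holds for $\grefr_m$.

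\emph{Homotopies of pairs.} Let $H\colon I_m\bprod I_k\to B$ be a homotopy with $H(0,t)=H(m,t)=*_B$. This is the main obstacle: we must lift $H$ to $\tilde H$ with $\tilde H(0,t)=x$ for all $t$. Proceed square by square.
\begin{itemize}
\item Lift the left column (constant at $*_B$) as constant at $x$.
\item Lift each row $H(\cdot,t)$ from $x$ using Step 1.
\item Show the lifts of consecutive rows $t$ and $t+1$ stay adjacent or equal, pointwise in $s$, by induction on $s$.
\end{itemize}
For the inductive step, consider a unit square $H(s,t),H(s+1,t),H(s+1,t+1),H(s,t+1)$ in $B$, where we already know that the three lifted vertices $\tilde H(s+1,t),\tilde H(s,t),\tilde H(s,t+1)$ form a path. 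This path is a map $I_2\to X$, or $I_3\to X$ after composing with $\grefr$ to match $\eta$'s domain. The right lifting property with respect to $\eta$ extends the lifted path of three edges to a lift of the whole square $I_1^2\to B$. So there is a lift of the fourth corner adjacent to both the lift of $H(s+1,t)$ and the lift of $H(s,t+1)$. By uniqueness of lifts in $1$-neighborhoods, this vertex equals both row-lifts at $(s+1,t)$ and $(s+1,t+1)$, so the row-lifts remain adjacent or equal. Degenerate squares, where some corners coincide, are handled by the local isomorphism alone.

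Finally, the right column $\tilde H(m,\cdot)$ lies in $p^{-1}(*_B)$ and consecutive entries are equal or adjacent. Since $p$ is injective on $1$-neighborhoods and maps this column to the constant $*_B$, the column is constant. Therefore the endpoint is independent of $t$.

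\textbf{Step 3: action axioms.} The constant loop lifts to the constant path, so $x\cdot 1=x$. For a concatenation $f\cdot g$, the lift starting at $x$ is $\tilde f$ followed by the lift of $g$ starting at $\tilde f(m)$, by uniqueness. Hence $x\cdot([f][g])=(x\cdot[f])\cdot[g]$, which is a right action.
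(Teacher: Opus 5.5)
The paper states this proposition without proof, as background taken from Kapulkin--Mavinkurve \cite{Udit:Fundamentalgroupoid}, so your argument has to stand on its own. Most of its skeleton is right: unique path lifting from the star isomorphisms, invariance under $\grefl_m,\grefr_m$, comparing the row lifts of a homotopy column by column, constancy of the right column, and the action axioms. But the square step in Step~2 is wrong as written, and it is the only place where the lifting property enters.

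\textbf{Why the square step fails.} Since $\eta$ is bijective on vertices, a lift $h$ against $\eta$ is completely determined by the given map $I_3\to X$. The lifting property therefore never produces a new vertex. It only asserts that the two ends of a lifted three-edge path around a square are equal or adjacent.
\begin{itemize}
\item Padding your two-edge path $\tilde H(s+1,t),\tilde H(s,t),\tilde H(s,t+1)$ with $\grefr_2$ gives $a,b,c,c$. This can only lie over a square with $f(1,1)=f(0,1)=H(s,t+1)$. That is not the square of $H$, and it says nothing about $H(s+1,t+1)$.
\item Your remark that degenerate squares are handled by the local isomorphism alone is also false. $\Umg{1}{v}$ is a star, not an induced subgraph, so the star isomorphism never tells you that two neighbours of a vertex are adjacent. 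A square with one collapsed side is a triangle, and there the star isomorphism gives nothing.
\end{itemize}
Concretely, $C_6\to C_3$, $i\mapsto i \bmod 3$, satisfies the star condition. Take the homotopy of pairs with rows $0,1,2,0$ and $0,0,0,0$. The row lifts from $0$ are $0,1,2,3$ and $0,0,0,0$. They stop being adjacent at $s=2$, where the square has corners $1,2,0,0$, and the endpoints $3\neq 0$ differ.

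\textbf{Repair.} Feed all four row lifts into the lifting property. The vertices $\tilde H(s+1,t),\tilde H(s,t),\tilde H(s,t+1),\tilde H(s+1,t+1)$ form a graph map $I_3\to X$: the first and last steps are steps of the row lifts, and the middle step is the induction hypothesis. This map lies over $f\circ\eta$ for the graph map $f\colon I_1^2\to B$, $f(a,b)=H(s+1-a,t+b)$. The lift $h$ then forces $h(0,0)=\tilde H(s+1,t)$ and $h(0,1)=\tilde H(s+1,t+1)$ to be equal or adjacent, which is exactly your inductive step. This works verbatim for degenerate squares and needs no uniqueness argument. With this replacement the rest of your proof goes through.
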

The following result can be found in \cite[Proposition 4.14]{Udit:Fundamentalgroupoid}.
\begin{proposition}\label{prop:UditChrisCovering}
    For a covering $p:(X,*_X)\to (B,*_B)$, the induced map
    \[
        \begin{tikzcd}
            p_*\from\AA_n(X,*_X) \arrow{r} & \AA_n(B,*_B)
        \end{tikzcd}
    \]
    is an isomorphism of groups for $n\geq 2$ and a monomorphism for $n=1$.
\end{proposition}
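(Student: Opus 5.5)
The statement is cited from \cite[Proposition 4.14]{Udit:Fundamentalgroupoid}, but a direct argument from the two defining properties of a covering goes as follows. The basic tool is a lifting statement: every graph map $f\colon I_m^n\to B$ with $f(\zero)=*_B$ has a unique lift $\tilde f\colon I_m^n\to X$ with $p\circ\tilde f=f$ and $\tilde f(\zero)=*_X$.

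\textbf{Lifting.} I will first prove this for paths $I_m\to B$. Uniqueness and existence follow by induction along the path, using that $p$ restricts to an isomorphism $\Umg{1}{v}\to\Umg{1}{p(v)}$. For $I_m^n$ I will lift coordinate lines one at a time: first the axis, then the lines parallel to the last coordinate, inductively over the lexicographic order of vertices. The lifts built this way must be consistent on every unit square $I_1^2\subseteq I_m^n$. If three vertices of a square are already lifted, their images form a copy of $I_3$ mapped via $\eta$ into $I_1^2$. The right lifting property of $p$ with respect to $\eta$ then produces a lift of the whole square. Local injectivity on $1$-neighborhoods makes this lift agree with the lift of the fourth vertex obtained along the other path. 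Squares whose image in $B$ is degenerate are handled in the same way, using injectivity on $\Umg{1}{v}$. I expect this step, that the lifting property for $\eta$ forces commutation of all unit squares and hence well-definedness of the lift of the cube, to be the main obstacle. Applied to $I_m^n\bprod I_k$, the same argument gives the homotopy lifting property.

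\textbf{Injectivity for $n\geq1$.} I will use description (2) of the earlier Proposition. Take $f\colon(I_m^n,\del I_m^n)\to(X,*_X)$ with $p\circ f$ null-homotopic. The homotopy may involve precomposition with morphisms of $\cat{LR}_n$; these commute with lifting, since a lift of $g\circ\lambda$ is the lift of $g$ composed with $\lambda$, by uniqueness. Lift a homotopy $H\colon I_m^n\bprod I_k\to B$ of pairs, running from $p\circ f$ to the constant map, to $\tilde H$ starting at $*_X$. By uniqueness of lifts, $\tilde H$ restricted to the bottom face equals $f$. The set $\del I_m^n\bprod I_k$ is connected for $n\geq 1$ and is mapped by $H$ to $*_B$. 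It contains $(\zero,0)$, so uniqueness of lifts of constant maps shows that $\tilde H$ sends it to $*_X$. The top face lifts a constant map, so it is constant at $*_X$. Hence $[f]=0$, and since $p_*$ is a homomorphism this proves injectivity.

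\textbf{Surjectivity for $n\geq2$.} Let $g\colon(I_m^n,\del I_m^n)\to(B,*_B)$ and let $\tilde g$ be its lift with $\tilde g(\zero)=*_X$. The boundary $\del I_m^n$ is connected for $n\geq2$ and is mapped by $g$ to $*_B$. It contains $\zero$, so the unique-lifting argument shows that $\tilde g(\del I_m^n)=*_X$. Then $\tilde g$ is a valid representative, and $p_*[\tilde g]=[g]$. For $n=1$ this step fails, because $\del I_m=\{0,m\}$ is disconnected; this is exactly why the statement only gives a monomorphism there.
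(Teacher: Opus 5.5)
The paper gives no argument of its own here; it imports the result from \cite[Proposition 4.14]{Udit:Fundamentalgroupoid}. Your proposal is a genuinely different, self-contained route that uses only the two conditions defining a covering. The local isomorphism on $1$-neighborhoods gives unique path lifting. The lifting property for $\eta$ is exactly what closes up a unit square once three of its sides are lifted; this is the same trick the paper uses in the proof of \ref{lem:covering_Umg}. Inducting over coordinate lines therefore yields unique based lifts of maps out of $I_m^n$ and $I_m^n\bprod I_k$. Description (2) of $\AA_n$ then gives injectivity, and surjectivity for $n\geq 2$.

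Apart from the slip below, this is sound. It buys a proof that shows precisely where the $\eta$-condition enters, whereas the citation keeps the paper short by relying on the covering theory of Kapulkin and Mavinkurve. As an aside, your cube-lifting lemma, together with uniqueness of lifts on the connected horn, shows directly that a covering is a fibration in the paper's sense. Its fibre $p^{-1}(*_B)$ is a discrete graph, since two adjacent vertices in the fibre would contradict injectivity on $1$-neighborhoods. So the long exact sequence (\ref{thm:LESFibration}) would also give the statement at once.

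One claim in your injectivity step is false as written. The graph $\del I_m^n\bprod I_k$ is not connected for $n=1$ and $m\geq 2$: it is $\{0\}\bprod I_k\sqcup\{m\}\bprod I_k$, as you yourself observe in the surjectivity paragraph. The conclusion survives. Each of the two components meets the bottom face, where $\tilde H=f$ takes the value $*_X$ because $f$ is a loop. Uniqueness of lifts of constant paths then gives $\tilde H\equiv *_X$ on both components.

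The same observation is what you need to handle the full equivalence relation, which in description (2) is only generated by homotopies and $\cat{LR}_n$-precompositions. Every component of $\del I_m^n\bprod I_k$ meets both the bottom and the top face, and the generators of $\cat{LR}_1$ fix $0$ and send the last vertex to the last vertex. Together these let you propagate ``the lift is a loop'' along a zigzag of such moves in either direction. You should make that induction explicit for $n=1$ rather than lifting a single homotopy.
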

\begin{definition}
    Let $p:(X,*_X)\to (B,*_B)$ be a covering. The group of deck transformations $\mathrm{Aut}(p)$ is the group of graph automorphisms $f:X\to X$ such that $p\circ f=p$.
\end{definition}

\begin{theorem}[Galois correspondence for graphs]
    Let $(X,*)$ be a connected graph, then there is an equivalence of categories
    \[
        \left\{\text{subgroups of }\AA_1(X,*) \text{ ordered by inclusion}\right\}\cong\left\{\text{connected based graph coverings of }(X,*)\right\}
    \]
    This correspondence is given by identifying a connected covering $p:(\tilde{X},*_{\tilde{X}})\to(X,*_X)$ with its image in the fundamental group $p_*(\AA_1(\tilde{X},*_{\tilde{X}}))$. The covering $p$ is regular if and only if the group of deck transformations $\mathrm{Aut}(p)$ acts transitively on $p^{-1}(*_X)$. 

\end{theorem}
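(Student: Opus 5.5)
The approach is to transport the classical Galois correspondence for covering spaces through the identification $\AA_1(X,*)\cong\pi_1(|N_1X|,*)$ from \cite{Chris:Revisited}, together with the observation that graph coverings of $X$ are exactly the graph-level incarnations of topological coverings of $|N_1X|$. Alternatively, one can mirror the classical proof directly in the graph setting. I would carry out the direct construction, using the lifting properties of Definition~\ref{dfn:cov} in place of path lifting.

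\textbf{Step 1: path and homotopy lifting.} Using the local isomorphism on $1$-neighborhoods, any path $I_m\to X$ lifts uniquely once a starting vertex is fixed. Using the right lifting property with respect to $\eta$, one shows that a.e.c.\ homotopies of paths $I_m\bprod I_k\to X$ lift square by square. This gives the monodromy action and injectivity of $p_*$, already recorded in Proposition~\ref{prop:UditChrisCovering}.

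\textbf{Step 2: the functor.} The assignment $p\mapsto p_*(\AA_1(\tilde X,*))$ is functorial. A based map of coverings over $X$ induces an inclusion of images, and conversely an inclusion of images yields a unique based map of coverings. To build that map, send a vertex $\tilde x$ to the endpoint of the lift of $p(\gamma)$, where $\gamma$ is any path from the basepoint to $\tilde x$. Well-definedness follows from Step 1 and the subgroup inclusion, and the result is a graph map because of the $1$-neighborhood isomorphisms. This gives full faithfulness; since the subgroup poset is thin, the hom-sets are at most singletons.

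\textbf{Step 3: essential surjectivity.} Given a subgroup $H\le\AA_1(X,*)$, define $\tilde X_H$ as follows. Its vertices are classes of paths $\gamma$ from $*$, where $\gamma\sim\gamma'$ if they have the same endpoint and $[\gamma\cdot\overline{\gamma'}]\in H$; paths are taken up to the equivalence of $\cat{LR}_1$ precomposition and endpoint-fixed homotopy. Two classes $[\gamma],[\gamma']$ are adjacent if $\gamma'\sim\gamma$ extended by one edge. Projection to the endpoint gives $p$.

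One then verifies four things. First, $p$ restricts to isomorphisms on $1$-neighborhoods; the delicate point is that distinct neighbors of $v$ lift to distinct neighbors, and that edges among neighbors are respected. Second, $p$ has the right lifting property with respect to $\eta$: a $4$-cycle in $X$ bounds a square $I_1^2\to X$, which is nullhomotopic rel endpoints, so going around it returns to the same class. Third, $\tilde X_H$ is connected. Fourth, $p_*\AA_1(\tilde X_H)=H$, since a loop lifts to a loop exactly when its class lies in $H$.

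\textbf{Step 4: regularity.} If $p_*\AA_1(\tilde X)$ is normal, then for each $g$ precomposing lifts by a loop representing $g$ defines a deck transformation, using Step 2 with equal images. These deck transformations act transitively on the fibre. Conversely, transitivity implies that the stabilizers of fibre points, which are the conjugates $g^{-1}Hg$, all coincide with $H$, so $H$ is normal.

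\textbf{Main obstacle.} The hardest step is the first verification in Step 3: that the $1$-neighborhood condition holds in $\tilde X_H$. Triangles and squares in $X$ must lift consistently, so I need that a path traversing a triangle or a $4$-cycle is trivial in $\AA_1$ rel endpoints. For $3$- and $4$-cycles this holds because they bound maps from $I_1^2$, with a degenerate square in the case of a triangle. Any $5$-cycle or longer is allowed to be nontrivial, and it is precisely this that makes the construction distinguish graph coverings from topological ones.
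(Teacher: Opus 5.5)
The paper does not prove this statement. It is quoted as background in the preliminaries, where the author disclaims originality, and the surrounding subsection attributes the covering theory, including unique path lifting, to \cite{Udit:Fundamentalgroupoid}. So there is no internal argument to compare with, and your proposal has to stand on its own.

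It does stand. The following pieces together form a complete proof:
\begin{itemize}
\item unique path lifting from the neighbourhood isomorphism;
\item square-by-square lifting of endpoint-fixed homotopies: lift two sides of each square, lift the third edge via the neighbourhood isomorphism, and close the square with the $\eta$-lifting property;
\item the lifting criterion, which gives at most one based map between connected coverings, and one exactly when the images are nested;
\item the path-class model $\tilde X_H$;
\item the conjugate-image argument for regularity.
\end{itemize}
Choosing the direct route was also the right call. The paper compares $X$ with $|N_1X|$ only at the level of homotopy and homology groups and never identifies graph coverings with covering spaces of $|N_1X|$. The transport route would therefore need that identification as an extra lemma.

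Two remarks on emphasis.

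First, the step you flag as the main obstacle is the easy one. In the paper's definition, $\Umg{1}{v}$ is the non-induced star at $v$. So the neighbourhood condition for $\tilde X_H$ only asks that the neighbours of $[\gamma]$ be exactly the classes $[\gamma e_w]$, one for each neighbour $w$ of the endpoint of $\gamma$. This is immediate once you note that your adjacency relation is well defined on classes and symmetric, since $e\bar{e}$ is null rel endpoints. Edges among neighbours are not part of the condition. Triangles need no separate treatment, since they are images of degenerate squares.

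The real geometric input is that the boundary loop of any, possibly degenerate, square $I_1^2\to X$ is null rel endpoints. It is used in Step~1 and in the $\eta$-lifting verification of Step~3, which is where the substance lies. Compatibility of edges among neighbours then follows from $\eta$-lifting, as in \ref{lem:covering_Umg}.

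Second, in the converse of Step~4 you should state one step explicitly. A deck transformation $\varphi$ with $\varphi(\tilde{*})=\tilde y$ carries loops at $\tilde{*}$ to loops at $\tilde y$ and satisfies $p\circ\varphi=p$. Hence $p_*\AA_1(\tilde X,\tilde{*})=p_*\AA_1(\tilde X,\tilde y)$. The latter is the conjugate of the former by the class of $p\circ\delta$, for any path $\delta$ from $\tilde{*}$ to $\tilde y$.
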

\subsection{Discrete homology groups}
The following homology theory has been introduced in \cite{Helene:Homology}. To define it, consider the following families of graph maps between the cube graphs $I_1^n$:
\begin{itemize}
    \item for $1 \leq i \leq n$ the \fa{face maps} $\gd_i^\ve$
          \begin{align*}
              \gd_i^\ve\colon I_1^{n-1} & \la I_1^n                                                \\
              (v_1,\dots, v_{n-1})      & \lm (v_1,\dots,v_{i-1},\varepsilon,v_i,\dots, v_{n-1}) .
          \end{align*}
    \item the \fa{degeneracy maps} $\gs_i$, for $1 \leq i \leq n+1$:
          \begin{align*}
              \gs_i\colon I_1^{n+1} & \la I_1^{n}                                     \\
              (v_1,\dots,v_{n+1})   & \lm (v_1,\dots, v_{i-1},v_{i+1},\dots,v_{n+1}).
          \end{align*}
\end{itemize}
Now for a graph $X$ and all $n\geq 0$ denote the set of graph homomorphisms $I_1^n\to X$ by $\Graph\left(I_1^n,X\right)$.
Precomposing these with face and degeneracy maps respectively gives rise to maps of sets
\begin{align*}
    \csd_i^\ve\from\Graph\left(I_1^n,X\right) & \la \Graph\left(I_1^{n-1},X\right) \\
    f                                         & \lm f\circ \gd_i^\ve
\end{align*}
and
\begin{align*}
    \css_i\from\Graph\left(I_1^n,X\right) & \la \Graph\left(I_1^{n+1},X\right) \\
    f                                     & \lm f\circ \gs_i
\end{align*}
Applying the free Abelian group functor $\ZZ(\cdot)\from \Set\to \Ab$ we obtain a family of Abelian groups
\[
    L_nX=\ZZ\left(\Graph(I_1^n,X)\right),
\]
which can be equipped with the differential
\begin{align*}
    \del_n\from L_nX & \la L_{n-1}X                                                  \\
    f                & \lm \sum_{i=1}^{n}(-1)^{i}\left( \del_i^1f-\del_i^0f \right).
\end{align*}
It holds by a routine calculation that $\del_{n-1}\del_{n}=0$, so we obtain a chain complex $(L_*X,\del_*)$. However, this complex does not define a homology theory with desirable properties; for instance, the homology groups of the chain complex $L_*X$ for $X$ the graph with only one vertex have rank 1 in all degrees. To construct a more suitable homology one extra step is needed. For this let
\[
    D_nX=\ZZ\left( \bigcup_{i=1}^{n}\css_i\left(\Graph(I_1^{n-1},X)\right) \right),
\]
the subgroup of $L_nX$ generated by the images of the degeneracy maps. It can easily be seen that $\del(D_nX)\subseteq D_{n-1}X$ and thus, the family $D_*X$ forms a chain sub-complex of $L_*X$.
\begin{definition}
    The \fa{discrete homology} of a graph $X$ is the homology of the chain complex
    \[
        C_*(X)\letbe L_*X/D_*X.
    \]
    For every $n\geq 0$ we write $H_n(X)$ for $H_n(C_*(X))$.
\end{definition}
A generalized variant of the following theorem can be found in \cite[Thm 3.4]{Helene:Homology}; in \cite{Helene:Vanishing} a specialization to graphs can be found.
\begin{theorem}[Mayer-Vietoris sequence for graphs]
    Let $X$ be a graph, $n\in \NN$, and let $A,B$ be two induced subgraphs of $X$ such that $X=A\cup B$ and the distance between $A\setminus B$ and $B\setminus A$ is at least $n+2$, then there is an exact sequence in homology in degrees $\leq n$:
    \[
        \begin{tikzcd}
            H_n(A\cap B) \ar[r,"\iota_*"] &H_n(A)\oplus H_n(B)\ar[r,"j_*"] \ar[d, phantom, ""{coordinate, name=Z}] &H_n(X) \ar[dll,swap,"\delta", rounded corners,
                to path={ -- ([xshift=8.5ex]\tikztostart.center)
                        |- (Z) [near end]\tikztonodes
                        -| ([xshift=-13ex]\tikztotarget.center) -- (\tikztotarget)}] \\
            H_{n-1}(A\cap B) \ar[r,"\iota_*"] &H_{n-1}(A)\oplus H_{n-1}(B)\ar[r,"j_*"] \ar[d, phantom, ""{coordinate, name=W}] &\cdots \ar[dll, rounded corners,
                to path={ -- ([xshift=8.5ex]\tikztostart.center)
                        |- (W) [near end]\tikztonodes
                        -| ([xshift=-13ex]\tikztotarget.center) -- (\tikztotarget)}] \\
            H_0(A\cap B) \ar[r,"\iota_*"] &
            H_0(A)\oplus H_0(B)\ar[r,"j_*"] &
            H_0(X),
        \end{tikzcd}
    \]
    where $\iota(x)=(x,x)$ and $j(x,y)=x-y$.
\end{theorem}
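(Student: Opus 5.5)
The plan is to avoid subdivision altogether: the distance hypothesis should make every cube of dimension at most $n+1$ small enough to lie entirely in $A$ or in $B$. Once that holds, the sequence comes from a short exact sequence of chain complexes that is exact in degrees $\leq n+1$. I will use the quotient complexes $C_*(-)=L_*(-)/D_*(-)$ throughout. The inclusions $A\cap B\hookrightarrow A$, $A\cap B\hookrightarrow B$, $A\hookrightarrow X$ and $B\hookrightarrow X$ induce chain maps. Hence $\iota(x)=(x,x)$ and $j(x,y)=x-y$ define chain maps
\[
C_*(A\cap B)\xrightarrow{\ \iota\ } C_*(A)\oplus C_*(B)\xrightarrow{\ j\ } C_*(X),
\]
and clearly $j\circ\iota=0$.

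The first step is a small-cubes observation. A graph map $f\colon I_1^k\to X$ sends any two vertices of $I_1^k$ to vertices at graph distance at most $k$, because any two vertices of $I_1^k$ are joined by a path with at most $k$ edges. Suppose $k\leq n+1$ and the image of $f$ met both $A\setminus B$ and $B\setminus A$. Then these two sets would be at distance $\leq n+1<n+2$, contradicting the hypothesis. So the vertex image of $f$ lies in $V(A)$ or in $V(B)$.

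Since $A$ and $B$ are induced subgraphs, every edge of $X$ between vertices of $A$ lies in $A$, and likewise for $B$. Hence $f$ factors through $A$ or through $B$ as a graph map. It follows that $j$ is surjective in every degree $k\leq n+1$.

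The second step is exactness in the middle, degree by degree. Here I must check compatibility with the quotient by degenerate cubes. A cube in a subgraph is degenerate there if and only if it is degenerate in $X$, since degeneracy means factoring through some $\sigma_i$, an intrinsic property of the map $f$. So $C_k(A)$ is freely generated by the nondegenerate cubes landing in $A$, and similarly for $B$, $A\cap B$ and $X$; all the maps involved just send a basis cube to itself.

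Injectivity of $\iota$ is then immediate. Now suppose $j(x,y)=0$. Then $x=y$ as elements of $C_k(X)$, so every basis cube with nonzero coefficient lands both in $A$ and in $B$. Its vertex image therefore lies in $V(A\cap B)$, and since $A\cap B$ is induced, the cube factors through $A\cap B$. This shows $(x,y)=\iota(x)$. Thus
\[
0\to C_k(A\cap B)\to C_k(A)\oplus C_k(B)\to C_k(X)\to 0
\]
is exact for every $k\leq n+1$.

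The last step is the standard snake/zig-zag argument applied to this short exact sequence. It yields the connecting map $\delta\colon H_k(X)\to H_{k-1}(A\cap B)$ and exactness of the long sequence at every spot up to and including $H_n$. Computing $H_k$ for $k\leq n$ only involves chains in degrees $\leq n+1$, so exactness of the short sequences in degrees $\leq n+1$ suffices.

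The main obstacle is the bookkeeping at the top degree: the zig-zag must only use chains of degree $\leq n+1$, and one must verify that cycles in $C_n(X)$ lift through the surjection in degree $n$, with boundaries coming from degree $n+1$. The other point needing care is the degeneracy issue in the second step, which is why I will explicitly use the free bases of nondegenerate cubes.
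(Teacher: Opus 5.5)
Your proof is correct: the small-cubes observation makes $0\to C_k(A\cap B)\to C_k(A)\oplus C_k(B)\to C_k(X)\to 0$ exact for all $k\leq n+1$. Equivalently, the subcomplex $\mathrm{im}(j)\subseteq C_*(X)$ of small chains agrees with $C_*(X)$ through degree $n+1$, which is exactly what the long exact sequence through $H_n$ needs. The paper gives no proof of its own and defers to Barcelo--Capraro--White and Barcelo--Greene--Jarrah--Welker; your chain-level argument is the standard one behind those results, and it rests on the same observation the paper uses in its Blakers--Massey proof, namely that every cube of dimension below the distance bound factors through $A$ or $B$.
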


\begin{definition}
    Let $X$ be a graph, $A$ a subgraph of $X$ and let $\iota:A\to X$ be the inclusion. The \fa{relative chain complex} $C_*(X,A)$ of $(X,A)$ is the cokernel of the inclusion $\iota_*:C_*(A) \to C_*(X)$.
    We define the \fa{relative homology groups} of $(X,A)$, denoted $H_n(X,A)$, to be the homology of the chain complex $C_*(X,A)$.
\end{definition}
The short exact sequence of chain complexes
\[
    0  \to C_*(A) \xrightarrow{\iota_*} C_*(X) \to C_*(X,A) \to 0
\]
gives rise to a long exact sequence in homology.

\subsection{Relative homotopy groups}
Relative homotopy groups were defined in \cite{Helene:Foundations}. Given $n\geq 1$ and a pair $(X,A)$, where $X$ is a graph and $A$ is a subgraph of $X$ and a vertex $*\in V(A)$, consider the sets of graph maps
\[
    \Graph((I_m^n,\del I_m^n,\hg{m}{n}),(X,A,*)),
\]
i.e. the set of graph maps
\begin{align*}
    f:(I_m^n,\del I_m^n,\hg{m}{n}) \to (X,A,*).
\end{align*}
Now consider the equivalence relation $\sim$ generated by the suitable notion of homotopy, that is maps
\begin{align*}
    H:(I_m^n\bprod I_k,\del I_m^n\bprod I_k,\hg{m}{n}\bprod I_k) \to (X,A,*).
\end{align*}
Every map $\phi:I_M^n\to I_m^n$ in $\cat{LR}_n$ satisfies $\phi(\hg{M}{n})= \hg{m}{n}$ and $\phi(\del I_M^n)=\del I_m^n$ and hence induces a map
\[
    \phi^*: \Graph((I_{m}^n,\del I_{m}^n,\hg{m}{n}),(X,A,*)) \to \Graph((I_{M}^n,\del I_{M}^n,\hg{M}{n}),(X,A,*)).
\]
This map is compatible with homotopies, in the sense that homotopic maps get sent to homotopic maps.
Now let
\[
    \AA_n(X,A,*)\letbe \left(\bigsqcup_{m\in\NN}\Graph((I_m^n,\del I_m^n,\hg{m}{n}),(X,A,*))\right) /\sim.
\]
where $\sim$ is the equivalence relation generated by the homotopies and precomposition with morphisms in $\cat{LR}_n$.
Every map of pairs $\vf:(X,A)\to (Y,B)$ gives rise to a map $\vf_*:\AA_*(X,A,*)\to\AA_*(Y,B,*)$, which is a group homomorphism in every degree $n\geq 2$ and a map of pointed sets in degree $n=1$.
Also, there is a long exact sequence of homotopy groups
\[
    \begin{tikzcd}
        \cdots \ar[r] &
        \AA_n(X,*)
        \ar[r] \ar[d, phantom, ""{coordinate, name=Z}] &
        \AA_n(X,A,*) \ar[dll,swap,"(\gd_n^1)^*", rounded corners,
            to path={ -- ([xshift=8.5ex]\tikztostart.center)
                    |- (Z) [near end]\tikztonodes
                    -| ([xshift=-13ex]\tikztotarget.center) -- (\tikztotarget)}] \\
        \AA_{n-1}(A,*) \ar[r] &
        \AA_{n-1}(X,*) \ar[r] \ar[d, phantom, ""{coordinate, name=W}] &
        \cdots \ar[dll, rounded corners,
            to path={ -- ([xshift=8.5ex]\tikztostart.center)
                    |- (W) [near end]\tikztonodes
                    -| ([xshift=-13ex]\tikztotarget.center) -- (\tikztotarget)}] \\
        \AA_0(A,*) \ar[r] &
        \AA_0(X,*)
    \end{tikzcd}
\]
where the boundary map $(\gd_n^1 )^*:\AA_{n}(X,A,*)\to \AA_{n-1}(A,*)$ is given as follows: Let $f\in \AA_{n}(X,A,*)$ be represented by $f:(I_m^n,\del I_m^n,\hg{m}{n})\to (X,A,*)$, then the image of the composition
\[
    \begin{tikzcd}
        (I_m^{n-1},I_m^{n-1},\del I_m^{n-1})\ar[r,"\gd_n^1"] &(I_m^n,\del I_m^n,\hg{m}{n})\ar[r,"f"] & (X,A,*)
    \end{tikzcd}
\]
is contained in $(A)$. Hence, we obtain a map $(\gd_n^1)^*f:(I_m^{n-1},\del I_m^{n-1})\to (A,*)$.

\subsection{Cubical sets}
In this subsection we collect some definitions and results needed later and refer for more details to \cite{Chris:Revisited} and \cite{Kapulkin:Homotopygroups}.
\begin{definition}
    The \fa{cube category} $\square$ (with \fa{connections}) is the category with objects $[1]^n$, $n \geq 0$, and morphisms generated by:
    \begin{enumerate}
        \item \fa{face maps} $\cd_i^\ve$ for $1\leq i \leq n$
              \begin{align*}
                  \cd_i^\ve\from \one^{n-1} & \la \one^n                                              \\
                  (v_1,\ldots, v_{n-1})     & \lm (v_1,\ldots,v_{i-1},\varepsilon,v_i,\dots, v_{n-1})
              \end{align*}
        \item \fa{degeneracies} $\cs_i$ for $1\leq i \leq n+1$
              \begin{align*}
                  \cs_i\from \one^{n+1} & \la \one^{n}                                     \\
                  (v_1,\ldots,v_{n+1})  & \lm (v_1,\ldots ,v_{i-1},v_{i+1},\ldots,v_{n+1})
              \end{align*}
        \item \fa{connections} $\cg_i^\ve$ for $1\leq i \leq n$
              \begin{align*}
                  \cg_i^\ve\from \one^{n+1} & \la \one^{n}                                               \\
                  (v_1,\ldots, v_{n+1})     & \lm (v_1,\ldots,v_{i-1},m^\ve(v_i,v_{i+1}),\ldots v_{n+1})
              \end{align*}
              where $m^\ve$ is the maximum (minimum) for $\ve=0$ ($\ve=1$).
    \end{enumerate}
    A \fa{cubical set} is a presheaf on the cube category $\square$. The \fa{category of cubical sets} is denoted by $\cSet$ and the morphisms in $\cSet$ are the natural transformations. For a cubical set $\mathsf{X}$, we write $\csd_i^\ve$, $\css_i$ and $\csg_i^\ve$ for the maps $\mathsf{X}(\cd_i^\ve)$, $\mathsf{X}(\cs_i)$ and $\mathsf{X}(\cg_i^\ve)$ respectively.
\end{definition}
From a cubical set $\mathsf{X}$ a CW-complex $|\mathsf{X}|$ can be obtained by taking the \fa{geometric realization} of $\mathsf{X}$. This is the space
\[
    |\mathsf{X}|\letbe \left( \bigsqcup_{n\in\NN} [0,1]^n\times \mathsf{X}(\one^n) \right)/\sim,
\]
where the equivalence relation $\sim$ is generated by
\begin{align*}
    \left(\,(t_1,\ldots ,t_{n-1}), \csd_i^\ve x\right) & \sim \left(\,(t_1,\ldots, t_{i-1}, \ve, t_{i},\ldots, t_{n-1}), x\right),                  \\
    \left(\,(t_1,\ldots ,t_{n+1}), \css_i x\right)     & \sim \left(\,(t_1,\ldots, t_{i-1}, t_{i+1},\ldots, t_{n+1}), x\right),                     \\
    \left(\,(t_1,\ldots ,t_{n+1}), \csg_i^\ve x\right) & \sim \left(\,(t_1,\ldots, t_{i-1}, m^\ve(t_i,t_{i+1}),t_{i+2}, \ldots, t_{n+1}), x\right).
\end{align*}
This construction is functorial; given a map of cubical sets $f:\mathsf{X}\to \mathsf{Y}$ and an element $((t_1,\ldots, t_n),x)\in |\mathsf{X}|$, we define $|f|((t_1,\ldots, t_n),x)=((t_1,\ldots,t_n),f(x))$. The \fa{skeleton} functor $\sk_n:\cSet\to\cSet$ is given as follows: Given a cubical set $\mathsf{X}$, let $\sk_n(\mathsf{X})(\one^m)=\mathsf{X}(\one^m)$ for $m\leq n$ and for $m>n$ let $\sk_n(\mathsf{X})(\one^m)$ be the set of those elements in $\mathsf{X}(\one^m)$ which can be obtained via a sequence of connections and degeneracies from $\sk_{n}(\mathsf{X})(\one^n)$. The structure maps $\csd_i^\ve$, $\css_i$ and $\csg_i^\ve$ are given as the restrictions of those of $\mathsf{X}$. Observe that $|\sk_n(\mathsf{X})|$ is the $n$-skeleton, in the sense of CW-complexes, of $|\mathsf{X}|$.

Our main motivation to study cubical sets is the following family of cubical sets $N_m X, m\in \NN_{>0}$ obtained from a graph $X$:
We define the cubical set $N_m X$ by setting
\begin{align*}
    N_m X(\one^n) = \mathrm{Hom}_{\Graph}(I_m^n, X)
\end{align*}
and maps given by precomposing $f \in \mathrm{Hom}_{\Graph}(I_m^n, X)$
with the graph maps
\begin{align*}
    \gd_i^\ve\from  I_m^{n-1} & \la I_m^n                                               \\
    (v_1,\dots, v_{n-1})      & \lm (v_1,\dots,v_{i-1},\ve\cdot m, v_i, \dots, v_{n-1})
\end{align*}
for $1 \leq i \leq n$ to yield the face maps $\csd_i^\ve$, with the graph maps
\begin{align*}
    \gs_i\from  I_m^{n+1} & \la I_m^{n}                                    \\
    (v_1,\dots,v_{n+1})   & \lm (v_1,\dots, v_{i-1},v_{i+1},\dots,v_{n+1})
\end{align*}
for $1 \leq i \leq n+1$ to yield the degeneracy maps $\css_i$ and with the graph maps
\begin{align*}
    \gg_i^\ve\from  I_m^{n+1} & \la I_m^{n}                                               \\
    (v_1,\dots, v_{n+1})      & \lm (v_1,\dots,v_{i-1},m^\ve(v_i,v_{i+1}),\dots, v_{n+1})
\end{align*}
to yield the connections $\csg_i^\ve$.

For $m\geq 1$ recall the graph maps
\begin{align*}
    \grefl_m: I_{m+1} & \la I_{m}       &  & \text{and  } & \grefr_m:I_{m+1} & \la I_m        \\
    t                 & \lm \max(0,t-1) &  &              & t                & \lm \min(m,t).
\end{align*}
and let
\begin{align*}
    \gref_m: I_{m+2} & \la I_m                  \\
    t                & \lm \max(0,\min(t-1,m)).
\end{align*}
Note that $\gref_m=\grefl_{m}\circ\grefr_{m+1}=\grefr_{m}\circ\grefl_{m+1}$.
Precomposition with $\grefl_m^n,\grefr_m^n: I_{m+1}^n \to I_m^n$ gives rise to maps of cubical sets $\crefl_m,\crefr_m:N_{m}(X)\to N_{m+1}(X)$. This allows us to consider the colimit
\begin{align*}
    NX & \letbe \colim{}\left( N_1 X\xrightarrow{\crefl_1} N_2 X\xrightarrow{\crefr_2} N_3 X\xrightarrow{\crefl_3} \cdots \right),
\end{align*}
where the sequence alternates between $\crefl$ and $\crefr$.
Denote by $\cref: N_1 X \to NX$ the canonical map.
\begin{theorem}\label{magic}
    For every $k\in \NN_{>0}$, there is, up to natural isomorphism, the following commutative diagram of functors
    \[
        \begin{tikzcd}
            \Graph \ar[r,"N"] \ar[d,"\AA_k"] & \cSet \ar[d,"| \cdot | "] \\
            \Grp & \ar[l,"\pi_k"] \Top
        \end{tikzcd}
    \]
    where $\AA_k:\Graph \to \Grp$ is the $k$-th discrete homotopy group and $\pi_k:\Top\to\Grp$ is the $k$-th homotopy group of a topological space; for $k=0$ both $\AA_0$ and $\pi_0$ are functors into pointed sets, the corresponding diagram also commutes. Furthermore, the maps $|\crefl_m|,|\crefr_m|:|N_m X|\to |N_{m+1} X|$ and $\tref:|N_1 X|\to |NX|$ are weak equivalences.
\end{theorem}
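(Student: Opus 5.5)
The plan is to factor the comparison through the combinatorial homotopy groups of the cubical set $NX$. This needs three facts. First, $NX$ is a Kan complex. Second, for a Kan cubical set the combinatorial homotopy groups agree naturally with $\pi_k$ of its geometric realization. Third, the combinatorial homotopy groups of $NX$ agree naturally with $\AA_k(X)$.

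\textbf{Step 1: $NX$ is Kan.} Take an open box $\sqcap^{n}_{i,\ve}\to NX$. Its finitely many faces already factor through some $N_mX$. Composing with enough maps $\crefl,\crefr$, each face becomes a graph map $I_{M}^{n-1}\to X$ that is constant near the relevant boundaries. These faces then glue to a graph map on the horn $\hg{M}{n}$, after permuting coordinates. To fill, use a retraction-like graph map $I_{M'}^n\to \hg{M}{n}$, where $M'$ is larger. It pushes the interior of the cube onto the horn along the missing direction. For example, send $(v_1,\dots,v_n)$ to $(v_1,\dots,v_{n-1},\max(0,v_n-\text{dist}(v,\partial)))$, truncated appropriately. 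This is a graph map because each coordinate changes by at most one along an edge. Precomposing the horn map with it gives a filler in $N_{M'}X$, hence in $NX$. The reparametrisations are exactly what the $\cat{LR}_n$-maps provide, so the filler is compatible with the face maps in the colimit. I expect this step to be the main obstacle. Building the retraction so that it matches the already-chosen faces on $\partial I^n_{M'}$ up to the maps $\gref_m$ needs careful bookkeeping of the alternating $\crefl/\crefr$ sequence.

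\textbf{Step 2: compare with $\AA_k$.} An $n$-cube of $NX$ with boundary at the base point is represented by some $f:(I_m^n,\partial I_m^n)\to(X,*)$. Two such cubes are identified in the colimit exactly when they agree after precomposition with $\cat{LR}_n$-maps. Cubical homotopies in $NX$ correspond to graph homotopies $I_m^n\bprod I_{k}\to X$, after subdividing by the same trick. Therefore $\pi_n(NX,*)$, computed combinatorially using that $NX$ is Kan, is description (2) of $\AA_n(X,*)$ in the proposition on the three models of $\AA_n$. Concatenation in $I_\infty^n$ matches the cubical composition defined by Kan filling, so this is a group isomorphism. Naturality in $X$ is clear, since everything is defined by postcomposition.

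\textbf{Step 3: realization and the weak equivalences.} For a Kan cubical set with connections, combinatorial homotopy groups agree naturally with $\pi_k|\cdot|$. This is the standard cubical analogue of the simplicial fact, and I would cite it from \cite{Kapulkin:Homotopygroups}. Combined with Step 2, it gives the commutative square. The case $k=0$ is the same argument with path components. For the last claim I would show that $|N_mX|\to|NX|$ induces isomorphisms on $\pi_k$. One route is to show that $N_mX$ has the same combinatorial homotopy groups as $NX$, computed via a fibrant replacement. It suffices to check that any sphere in $|N_mX|$ can be cellularly approximated by a map from a subdivided cube, which is a graph map $I^n_{mM}\to X$. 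One then checks that it becomes null-homotopic in $|N_mX|$ whenever it does in $|NX|$, and the finiteness argument of Lemma~\ref{lem:colimithomotopygroups} transfers this. Since every map in the colimit sequence is a section-like map $\crefl_m$ or $\crefr_m$ compatible with these identifications, two-out-of-three then gives that each $|\crefl_m|$ and $|\crefr_m|$ is a weak equivalence, and hence that $\tref$ is one too.
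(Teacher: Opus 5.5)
You should know first that the paper gives no proof of this theorem. It is quoted from \cite{Chris:Revisited}, and the weak-equivalence part is then used as an input in \ref{prop:homotopy_inverse_Psi}, to upgrade the explicit left homotopy inverse $\chop$ of $\tref$ to a two-sided one. For the commutative square, your route is the standard one and is fine in outline: Kan-ness of $NX$, identification of its combinatorial homotopy groups with description (2) of $\AA_k(X)$, then the comparison theorem of \cite{Kapulkin:Homotopygroups}. Step~1 is not where the difficulty lies. Suppose the $\cat{LR}$-reparametrization $\phi\colon I_{M'}\to I_M$, $t\mapsto\max(0,\min(t-a,M))$, pads by at least $M$ on both sides. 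Then $v\mapsto(\phi(v_1),\ldots,\phi(v_{n-1}),\max(0,\phi(v_n)-d(v)))$, with $d(v)=\min_{j<n}\min(v_j,M'-v_j)$, is a graph map $I^n_{M'}\to\hg{M}{n}$ that agrees with $\phi^n$ on $\hg{M'}{n}$.

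The genuine gap is the last claim, that $|\crefl_m|$, $|\crefr_m|$ and $\tref$ are weak equivalences.

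\textbf{Surjectivity.} Cellular approximation only pushes a map $([0,1]^k,\partial[0,1]^k)\to(|N_mX|,*)$ into the $k$-skeleton. It does not make it the realization of a cubical map out of a subdivided cube, i.e.\ of a graph map $I^k_{mM}\to X$. That would be a cubical approximation theorem for the non-Kan cubical set $N_mX$, and it is not routine. Open stars of two vertices of $|N_1X|$ meet whenever the vertices are opposite corners of a square. So the simplicial-approximation recipe can produce vertex assignments that are not graph maps. For $m=1$ this approximation statement already contains the surjectivity of $\AA_k(X)\to\pi_k(|N_1X|)$, which is the substance of what you want to prove. It is the kind of property \cite{Babson:CubicalApprox} had to assume.

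\textbf{Injectivity.} Your injectivity argument is circular. Compactness moves a null-homotopy in $|NX|$ into some $|N_MX|$ with $M\geq m$. Getting back to $|N_mX|$ then needs $|N_mX|\to|N_MX|$ to be injective on $\pi_k$, which is the claim itself. The finiteness argument of \ref{lem:colimithomotopygroups} works only because the stage maps are assumed to be isomorphisms there.

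\textbf{Coverage of all stage maps.} Two-out-of-three only reaches the maps occurring in the alternating sequence. The others need the identity $\crefr_{m+1}\circ\crefl_m=\crefl_{m+1}\circ\crefr_m$ and two-out-of-six.

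\textbf{What would repair it.} Injectivity can be obtained honestly from an explicit map back, as in Section~3: $\chop\circ\tref=\chop_1\simeq\mathrm{id}$ and $\chop_m\circ|\cref^m|=\chop_1$. Surjectivity still requires a real argument. For instance, for $f\colon(I^k_m,\partial I^k_m)\to(X,*)$ you would need the single cube $|\bar f|$ and the grid of its unit subcubes $\tref\circ\chop\circ|\bar f|$ to be homotopic in $|NX|$. That is a subdivision statement to be proved inside the Kan complex $NX$. The paper does not prove it either; it takes the weak equivalence from \cite{Chris:Revisited}.
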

The natural transformation $\AA_k(X) \to \pi_k(|NX|)$ is given as follows: Let $f:(I_m^k,\del I_m^k) \to (X,*)$ be a representative of a homotopy class in $\AA_k(X,*)$. Then $f$ is by definition of $N_m$ an element of $N_m X(\one^k)$ and can hence be identified with an element of $NX(\one^k)$ via the canonical map $N_m X\to NX$. Now the following map
\begin{align*}
    |\bar{f}|:([0,1]^k,\del[0,1]^k) & \la (|NX|,*)          \\
    (t_1,...,t_k)                   & \lm ((t_1,...,t_k),f)
\end{align*}
is a representative of a homotopy class of $(|NX|, *)$.

In homology, we have a similar result to \ref{magic}: Observe that there is exactly one $n$-cell in $|N_1 X|$ for every graph map $f:I_1^n\to X$, that is neither a connection nor a degeneracy. Using that the subcomplex generated by all connections is acyclic (\cite{Welker:Connections}), we obtain an isomorphism
\begin{align*}
    H_*(X) & \cong H_*(|N_1 X|).
\end{align*}
Similarly, for a pair of graphs $A\subseteq X$ it holds that $H_*(X,A)\cong H_*(|N_1 X|,|N_1 A|)$.
\subsection{Fibrations in discrete homotopy theory}
This entire subsection is based on section 5 in \cite{Chris:Revisited}.
\begin{definition}
    A graph map $\vf:X\to Y$ is called a \fa{fibration}, if for every $n\in\NN$ and every diagram of the form
    \[
        \begin{tikzcd}
            \hg{m}{n}\ar[r,"\hat{g}"]\ar[d,hook]&X\ar[d,"\vf"]\\
            I_m^{n}\ar[r,"g"]&Y
        \end{tikzcd}
    \]
    there is a $k\in\NN$ and a map $h:I_{m+2k}^{n}\to X$ such that the following diagram commutes
    \[
        \begin{tikzcd}
            \hg{m+2k}{n}\ar[r,"c"]\ar[d,hook]  &\hg{m}{n}\ar[r,"\hat{g}"]&X\ar[d,"\vf"]\\
            I_{m+2k}^{n}\ar[r,"c"]\ar[rru,dashed,"h"]&I_m^{n}\ar[r,"g"]&Y,
        \end{tikzcd}
    \]
    where $c:I_{m+2k}^{n}\to I_m^{n}$ is the $n$-fold box product of the map $$I_{m+2k}\to I_m,t\mapsto\max(0,\min(t-k,m)).$$
\end{definition}

\begin{theorem}[\cite{Chris:Revisited}, Theorem~5.14]\label{thm:LESFibration}
    Let $\vf:(X,*_X)\to(Y,*_Y)$ be a based fibration of graphs and let $F=\vf^{-1}(*_Y)$. Then there is a long exact sequence
    \[
        \begin{tikzcd}
            \cdots \ar[r] &
            \AA_n(X,*_X)
            \ar[r] \ar[d, phantom, ""{coordinate, name=Z}] &
            \AA_n(Y,*_Y) \ar[dll,swap, rounded corners,
                to path={ -- ([xshift=8.5ex]\tikztostart.center)
                        |- (Z) [near end]\tikztonodes
                        -| ([xshift=-10ex]\tikztotarget.center) -- (\tikztotarget)}] \\
            \AA_{n-1}(F, *_X) \ar[r] &
            \AA_{n-1}(X,*_X) \ar[r] \ar[d, phantom, ""{coordinate, name=W}] &
            \cdots \ar[dll, rounded corners,
                to path={ -- ([xshift=8.5ex]\tikztostart.center)
                        |- (W) [near end]\tikztonodes
                        -| ([xshift=-10ex]\tikztotarget.center) -- (\tikztotarget)}] \\
            \AA_0(F,*_X) \ar[r] &
            \AA_0(X,*_X) \ar[r] & \AA_0(Y,*_Y)
        \end{tikzcd}
    \]

\end{theorem}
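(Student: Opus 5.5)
The plan is to reduce the statement to the long exact sequence of the pair $(X,F)$ recalled above. The key step is to show that, for a fibration, $\vf$ induces a bijection
\[
    \vf_*\colon \AA_n(X,F,*_X)\la \AA_n(Y,*_Y,*_Y)\cong \AA_n(Y,*_Y)
\]
for all $n\geq 1$. Here the last identification holds because a map $(I_m^n,\del I_m^n,\hg{m}{n})\to(Y,*_Y,*_Y)$ is exactly a map $(I_m^n,\del I_m^n)\to (Y,*_Y)$. Once this is done, we substitute $\AA_n(Y,*_Y)$ for $\AA_n(X,F,*_X)$ in the long exact sequence of the pair $(X,F)$. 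Its connecting map then becomes $\AA_n(Y,*_Y)\to\AA_{n-1}(F,*_X)$, given by lifting and restricting to the face $\gd_n^1$. The map $\AA_n(X)\to\AA_n(X,F)$ becomes $\vf_*$, since the composite $\AA_n(X)\to\AA_n(X,F)\to\AA_n(Y)$ is just postcomposition with $\vf$. The tail $\AA_0(F)\to\AA_0(X)\to\AA_0(Y)$ will be checked by hand with the lifting property for $n=1$, that is, path lifting.

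\textbf{Surjectivity.} Let $g\colon (I_m^n,\del I_m^n)\to (Y,*_Y)$ be given, and let $\hat g\colon\hg{m}{n}\to X$ be the constant map at $*_X$. The lifting property of $\vf$ gives $h\colon I_{m+2k}^n\to X$ with $\vf\circ h=g\circ c$ and $h|_{\hg{m+2k}{n}}=*_X$. Since $c(\del I_{m+2k}^n)\subseteq \del I_m^n$, we get $\vf(h(\del I_{m+2k}^n))=*_Y$, so $h(\del I_{m+2k}^n)\subseteq F$. Hence $h$ is a representative of a class in $\AA_n(X,F,*_X)$. Moreover, $\vf\circ h=g\circ c$, and $c$ is a composite of $\grefl$'s and $\grefr$'s, so it lies in $\cat{LR}_n$ and $[g\circ c]=[g]$.

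\textbf{Injectivity.} Suppose $f,f'$ represent classes in $\AA_n(X,F,*_X)$ and $\vf\circ f\sim \vf\circ f'$. After precomposing with $\cat{LR}_n$-maps we may assume that both live on the same $I_m^n$ and that there is a single chain of homotopies of triples in $Y$. Treat one homotopy $G\colon I_m^n\bprod I_\ell\to Y$ at a time. We regard $G$ as an $(n+1)$-cube, after padding $\ell$ and $m$ to be equal via $\cat{LR}$-maps, and we specify a lift on a horn: $f$ on the bottom $\{0\}$-face of the homotopy direction, $*_X$ on $\hg{m}{n}\bprod I_\ell$, and the constant homotopy in the $F$-directions. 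The last part requires first reordering coordinates so that the open face of the $(n+1)$-horn is the face $\gd_n^1 \bprod I_\ell$ that is allowed to move in $F$. Lifting produces a homotopy of triples in $X$ from $f\circ c$ to some $\tilde f$ with $\vf\circ\tilde f=\vf\circ f'\circ c$.

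The remaining problem is to compare $\tilde f$ and $f'$, which both lie over the same map in $Y$. I would handle this with a second lift: the straight-line homotopy in $Y$ is constant, and we lift it on a horn given by $\tilde f$, $f'$, and the basepoint data.

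I expect this injectivity step to be the main obstacle. There are two sources of difficulty. First, the horn in the lifting property has a specific open face, so the coordinates must be arranged carefully. Second, every lift introduces a reparametrisation $c$, and one must check that $c$ is compatible with $\del$ and with the horn, and that it does not change classes, which holds because $c$ lies in $\cat{LR}$. Finally, exactness at the low-degree pointed sets, and the check that the substituted connecting map agrees with $(\gd^1_n)^*$, are routine and will be done by inspecting the definitions.
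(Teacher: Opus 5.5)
You have a correct proof by a genuinely different route. The paper gives no proof of this statement: it imports it from Carranza--Kapulkin, where the sequence comes from the cubical side. There the reparametrised lifting condition is exactly what makes $N\vf$ a Kan fibration: a lifting problem against an open box in $NX$ lives at some finite stage $N_m$ and is solved at $N_{m+2k}$. Its fibre is $NF$, and the long exact sequence of a Kan fibration of cubical Kan complexes is transported back along $\AA_n(X)\cong\pi_n(|NX|)$.

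You instead stay inside graphs and run the classical argument for Serre fibrations. You show that $\vf_*\colon\AA_n(X,F,*_X)\to\AA_n(Y,*_Y)$ is a bijection by lifting, then splice this into the long exact sequence of the pair $(X,F)$. This gives an elementary proof that uses only the lifting property and the pair sequence. It also gives an explicit connecting map (lift, then restrict to $\gd_n^1$) and the identification $\AA_n(X,F)\cong\AA_n(Y)$ itself, which fits the relative viewpoint of this paper. The price is the bookkeeping with $\cat{LR}$-maps and reparametrisations, plus dependence on the pair sequence. The cubical route gets all of this at once from the facts that $NX$ is Kan and $\AA_n\cong\pi_n\circ|N(-)|$.

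Two points in your injectivity step need tidying.

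\textbf{The horn in the first lift.} Lifting a single homotopy $G$ needs no reordering. You prescribe $f$ on the bottom, $*_X$ on $\hg{m}{n}\bprod I_\ell$, and the constant homotopy of $f\circ\gd_n^1$ on $\gd_n^1\bprod I_\ell$, which lies over $*_Y$. This fills every face except the top, so it is literally the paper's standard horn. If you also left $\gd_n^1\bprod I_\ell$ open, there would be two missing faces, which the definition of fibration does not cover. The reordering belongs to the comparison lift, where both ends $\tilde f$ and $f'\circ C$ are prescribed and $\gd_n^1\bprod I$ is the open face. There you swap the $n$-th coordinate with the homotopy coordinate, which is legitimate because coordinate permutations commute with the box power of $c$. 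Alternatively, concatenate the chain of homotopies into one and do only that single lift with both ends prescribed.

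\textbf{Normalising the zigzag.} Turning an arbitrary zigzag of homotopies and $\cat{LR}$-precompositions into one chain of homotopies on a common $I_M^n$ needs a lemma. Any two $\cat{LR}$-composites $I_M^n\to I_m^n$ must give composites that are homotopic through maps of triples. For example, $g\circ\grefl_m^n\simeq g\circ\grefr_m^n$ by changing one coordinate at a time. You should state and check this.
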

\begin{definition}\label{dfn:fibrationcategory}
    A fibration category is a category $\cat{C}$ with two subcategories $\cat{F}$ and $\cat{W}$ satisfying the following conditions. We call the morphisms of $\cat{F}$ fibrations and the morphisms of $\cat{W}$ weak equivalences. A map that is a morphism in both is called an acyclic fibration.
    \begin{enumerate}
        \item Given two morphisms:
              \[ X \xrightarrow{f} Y \xrightarrow{g} Z \]
              if two of $f$, $g$, $gf$ are weak equivalences then so is the third;
        \item all isomorphisms are acyclic fibrations;
        \item pullbacks along fibrations exist; fibrations and acyclic fibrations are stable under pullback;
        \item $\cat{C}$ has a terminal object $1$; the map $X \to 1$ is a fibration for any object $X \in \cat{C}$;
        \item every map can be factored as a weak equivalence followed by a fibration.
    \end{enumerate}
\end{definition}
The following is Theorem 5.9 in \cite{Chris:Revisited}.
\begin{theorem}
    The category $\Graph$ with weak equivalences and fibrations as above is a fibration category.
\end{theorem}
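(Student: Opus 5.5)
The statement is the last theorem cited, Theorem 5.9 of \cite{Chris:Revisited}: $\Graph$, with the fibrations and weak equivalences defined above, is a fibration category. The plan is to check the five axioms of Definition~\ref{dfn:fibrationcategory} one at a time. The basic tool throughout is Theorem~\ref{magic}. It lets us translate statements about weak equivalences of graphs into statements about weak equivalences of the CW-complexes $|NX|$, which are naturally weakly equivalent to $|N_1X|$.

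\emph{Two-out-of-three.} Functoriality of $\AA_k$ gives this directly. If two of $f$, $g$, $gf$ induce isomorphisms on all $\AA_k$ at all basepoints, so does the third. One small point needs care: surjectivity on $\AA_0$ is needed to handle basepoints of $Y$ not in the image of $f$. This follows from $\AA_0$ being a bijection in the relevant cases.

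\emph{Isomorphisms and terminal object.} Every isomorphism is a weak equivalence. It is also a fibration: given a horn-lifting square, take $k=0$ and $h=\vf^{-1}\circ g$. The terminal object is $I_0$. For $X\to I_0$ to be a fibration, every map $\hat g\colon\hg{m}{n}\to X$ must extend after reparametrization by $c$. We do this with $k=0$ by using a retraction $r\colon I_m^n\to\hg{m}{n}$, a graph map. Such an $r$ is built by the usual ``push the top face down'' argument: collapse the last coordinate inductively using the $\grefl/\grefr$-type maps. If a direct retraction fails, we pass to $I_{m+2k}^n$ and use the padding by $c$, which is exactly what the definition permits.

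\emph{Pullbacks.} Pullbacks exist in $\Graph$. The pullback $X\times_Y Z$ is the induced subgraph of the categorical product on pairs with equal image, and a pair of graph maps into it is exactly a compatible pair. Stability of fibrations under pullback is formal, because the lifting property is defined by a right lifting condition against fixed diagrams (up to the reparametrization $c$, which commutes with everything). Stability of \emph{acyclic} fibrations is harder. Use the long exact sequence of Theorem~\ref{thm:LESFibration}: a fibration is a weak equivalence if and only if every fibre over every basepoint has trivial $\AA_*$, with care at $\AA_0$ and $\AA_1$. The fibres of a pullback are isomorphic to the fibres of the original map. So acyclicity passes to pullbacks, provided surjectivity on components is also transported. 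That follows from path lifting, the case $n=1$ of the fibration property.

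\emph{Factorization.} This is the main obstacle. For $f\colon X\to Y$, use a path-space construction. Because paths have variable length, let $PY$ be the graph whose vertices are pairs (a graph map $\gamma\colon I_\infty\to Y$, eventually constant in both directions) modulo the $\cat{LR}$-type stabilization. Form $P_f=X\times_Y PY$ via the starting point, and let $P_f\to Y$ be evaluation at the endpoint. The inclusion $X\to P_f$ by constant paths should be a homotopy equivalence, via a contraction of paths. The endpoint map should satisfy the horn lifting property, because a horn in $P_f$ together with a cube in $Y$ assembles into a horn of one dimension higher in $Y$. That horn is filled by retraction, after enlarging $m$ by $2k$. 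The delicate part is making adjacency in $PY$ compatible with these fillings so that all maps are graph maps. I would first try defining $\gamma\sim\gamma'$ as adjacent when they agree up to shift and are pointwise adjacent, and then verify the lifting property using the $c$-padding.
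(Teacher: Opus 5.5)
The paper does not prove this statement itself; it imports it as Theorem~5.9 of \cite{Chris:Revisited}. Measured against that, your treatment of the following axioms is fine:
\begin{itemize}
\item two-out-of-three;
\item isomorphisms being acyclic fibrations;
\item existence of pullbacks and pullback-stability of fibrations;
\item pullback-stability of acyclic fibrations. Fibres of the pullback are isomorphic to fibres of the original map. Vertex-surjectivity comes from path lifting, since $\hg{m}{1}$ is a single vertex. Theorem~\ref{thm:LESFibration} does the rest.
\end{itemize}

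\textbf{The gap in axiom (4).} Take $X=\hg{m}{n}$ and $\hat g=\mathrm{id}$. This shows that ``$X\to I_0$ is a fibration for every $X$'' is equivalent to the following: for all $m,n$ there are $k$ and a graph map $r\colon I_{m+2k}^n\to\hg{m}{n}$ that agrees with $c$ on $\hg{m+2k}{n}$.

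Your ``push the top face down'' does not produce such an $r$. Collapsing the last coordinate by $\grefl/\grefr$-type maps moves the side facets, so it does not restrict to $c$ on the horn.

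Nor can the padding stay small. Take $n=2$ and $m\geq 2$. The two top corners of $I_{m+2k}^2$ must go to $(0,m)$ and $(m,m)$. These are at distance $3m$ in $\hg{m}{2}$, because the top row is missing. In $I_{m+2k}^2$ they are joined by a path of length $m+2k$, so $k\geq m$ is forced. For $n=2$ the horn is a path, and a $1$-Lipschitz extension works once $k\geq m$. For $n\geq 3$ the target is no longer a path, and no such general extension argument is available.

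Producing these fillers in all dimensions amounts to the statement that $NX$ is a Kan complex. That is a central technical result of \cite{Chris:Revisited}, and the cited theorem rests on it. ``If a direct retraction fails, pass to $I_{m+2k}^n$'' restates what has to be proved rather than proving it.

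\textbf{Axiom (5) inherits this gap.} Lifting against your endpoint map $P_f\to Y$ needs two fillings:
\begin{itemize}
\item first the $X$-coordinate, which is fibrancy of $X$ again;
\item then maps $I_m^n\bprod I_\ell\to Y$ prescribed on $(\hg{m}{n}\bprod I_\ell)\cup(I_m^n\bprod\{0,\ell\})$, with $\ell$ depending on the paths involved.
\end{itemize}
The second is a rectangular form of the same unproved filling lemma. It is what the paper records as $(e_0,e_1)\colon PY\to Y\times Y$ being a fibration. The adjacency issue you worry about disappears with the paper's $P_mY$: pointwise adjacency makes $P_mY$ the internal hom for $\bprod$, so maps $Z\to P_mY$ are exactly maps $Z\bprod I_m\to Y$, and no shifts are needed.

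Your claim that the constant-path inclusion $X\to P_f$ is a homotopy equivalence is false in general. Along a homotopy $P_f\bprod I_\ell\to P_f$, the start and end of each path each move by at most $\ell$ edges. So for $f\colon I_0\to I_\infty$, a straight path of length greater than $2\ell$ can never reach a constant path.

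What holds, and suffices, is that the inclusion is a weak equivalence. A.e.c.\ maps and homotopies into $P_f$ have finite image, so they involve only paths of some bounded length $N$. On those paths, the shortening homotopy $(\gamma,t)\mapsto\gamma(\min(-,N-t))$ is an a.e.c.\ homotopy from the identity to the composite $P_f\to X\to P_f$ (projection followed by constant paths).
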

We give a brief overview of the construction behind the factorization of a map into a weak equivalence and a fibration, which relies on the notion of a path graph. The path graph has first been defined in \cite{Babson:CubicalApprox}, however, we stick to the slightly different notion defined in \cite{Chris:Revisited}.
\begin{definition}
    For a graph $Y$, let $P_m Y$ be the graph with vertices $V(P_mY)=\hom_{\Graph}(I_m,Y)$, in which two distinct vertices $f,g$ are connected if and only if for all $v\in V(I_m)$ it holds that $\{f(v),g(v)\}\in E(Y)$ or $f(v)=g(v)$. The \fa{path graph of }$Y$ is the colimit $PY$ of
    \begin{align*}
        P_1Y\hookrightarrow P_2Y\hookrightarrow \ldots,
    \end{align*}
    where the map $P_{m-1}Y\hookrightarrow P_mY$ maps $\omega:I_{m-1}\to Y$, to the path
    \begin{align*}\label{diag:mappingpathgraph}
        I_{m} & \la Y \\
        v     & \lm
        \begin{cases}
            \omega(v),   & v<m  \\
            \omega(m-1), & v=m.
        \end{cases}
    \end{align*}
\end{definition}
With this, there is a way to construct two acyclic fibrations $e_0,e_1:PY\to Y$, where $e_0$ maps every path to its starting point and $e_1$ maps every path to its endpoint. Furthermore, the induced map $(e_0,e_1):PY\to Y\times Y$
is a fibration.

Now, for $f:(X,*_X)\to (Y,*_Y)$, consider the following diagram, where $E_f$, the \fa{mapping path graph}, is defined as the pullback of the square
\[
    \begin{tikzcd}
        E_f\ar[r]\ar[d]& X\ar[d,"f"]\\
        PY\ar[r,"e_1"]\ar[d,"e_0"]& Y\\
        Y
    \end{tikzcd}
\]
It can be shown that the vertical map $p_f:E_f\to Y$ is a fibration and the horizontal map $E_f\to X$ is an acyclic fibration. The \fa{homotopy fiber of $f$}, denoted by $\mathrm{hofib}(f)$, is given by $p_f^{-1}(*_Y)$. Choosing a basepoint $*\in \mathrm{hofib}(f)$, we then get a long exact sequence
\[
    \begin{tikzcd}
        \cdots \ar[r] &
        \AA_n(X,*_X)
        \ar[r] \ar[d, phantom, ""{coordinate, name=Z}] &
        \AA_n(Y,*_Y) \ar[dll,swap, rounded corners,
            to path={ -- ([xshift=8.5ex]\tikztostart.center)
                    |- (Z) [near end]\tikztonodes
                    -| ([xshift=-10ex]\tikztotarget.center) -- (\tikztotarget)}] \\
        \AA_{n-1}(\mathrm{hofib}(f),*) \ar[r] &
        \AA_{n-1}(X,*_X) \ar[r] \ar[d, phantom, ""{coordinate, name=W}] &
        \cdots \ar[dll, rounded corners,
            to path={ -- ([xshift=8.5ex]\tikztostart.center)
                    |- (W) [near end]\tikztonodes
                    -| ([xshift=-10ex]\tikztotarget.center) -- (\tikztotarget)}] \\
        \AA_0(\mathrm{hofib}(f),*) \ar[r] &
        \AA_0(X,*_X) \ar[r] &
        \AA_0(Y,*_Y).
    \end{tikzcd}
\]
Note that we can give a concrete description of $E_f$ and $\mathrm{hofib}(f)$ by using how limits and colimits are computed in $\Graph$. Every path $\omega\in PY$ is eventually constant, we will denote the endpoint of $\omega$ by $\lim{k\to \infty} \omega(k)$.
\begin{align*}
    E_f               & = \left\{\,(\omega,x)\in PY\times X \,\middle|\, \lim{k\to \infty} \omega(k) = f(x)\,\right\}                \\
    \mathrm{hofib}(f) & =\left\{\,(\omega,x)\in PY\times X \,\middle|\,\lim{k\to \infty}\omega(k) = f(x),\, \omega(0)=*_Y \,\right\}
\end{align*}
Furthermore, the upper horizontal map can be characterized as
\begin{align*}
    \pr{2}:E_f & \la X \\
    (\omega,x) & \lm x
\end{align*}
which admits a right inverse $s: X \to E_f$ defined by $s(x) = (c_{f(x)}, x)$, where $c_{f(x)}$ denotes the constant path at $f(x)$. Since $s$ is a section of the projection $\pr{2}$, and the map $\pr{2}$ is a weak equivalence, so is $s$. Hence we obtain a factorization of $f$ as a weak equivalence followed by a fibration as follows
\begin{align*}
    X \overset{s}{\la} E_f \overset{p_f}{\la} Y.
\end{align*}

\section{A homotopy inverse}
In this section, we construct a homotopy inverse to the canonical map $|N_1X|\to |NX|$. It has been shown in \cite{Chris:Revisited} that this map is a weak equivalence, therefore, since both $|N_1X|$ and $|NX|$ are CW-complexes, the existence of a homotopy inverse follows from Whitehead's theorem, and is therefore not surprising. However, this concrete description will allow us to also construct a relative map $\AA_*(X,A,*)\to \pi_*(|N_1X|,|N_1A|,*)$, which will be shown to be an isomorphism using the five lemma. In particular, this will follow a relative version of the Hurewicz theorem.
\subsection{Construction of the inverse}
Consider the following two auxiliary maps for $m=2k+1, k\in\NN$:
\begin{align*}
    \hidx_m:[0,1] & \la \{0,1,\ldots ,m-1\}                                                                                                                      \\
    t             & \lm \begin{cases}
                            0   & \for t< \frac{1}{2^{k+1}}                                                            \\
                            k-l & \for \frac{1}{2^{l+2}}\leq t<\frac{1}{2^{l+1}} \text{ and } l\in\{1,\ldots,k-1\}     \\
                            k   & \for \frac{1}{4} \leq t< \frac{3}{4}                                                 \\
                            k+l & \for 1-\frac{1}{2^{l+1}}\leq t<1-\frac{1}{2^{l+2}} \text{ and } l\in\{1,\ldots,k-1\} \\
                            m-1 & \for t\geq 1-\frac{1}{2^{k+1}}
                        \end{cases}
\end{align*}
\begin{figure}[htbp]
    \begin{tikzpicture}[xscale=1.2,yscale=1.2]
        \draw (0,0) -- (12,0) node[right, font=\small] {$t$};
        \draw (0,0) -- (0,6) node[above,yshift=2pt, font=\small] {$\hidx_7(t)$};
        \foreach \y in {0,1,2,3,4,5,6}{
                \draw (-0.08,\y) -- (0.08,\y);
                \node[left, xshift=-2pt, font=\small] at (-0.08,\y) {\y};
            }
        \draw[darkgray,dotted] (0.75,0)  -- (0.75,1);
        \draw[darkgray,dotted] (1.5,0)   -- (1.5,2);
        \draw[darkgray,dotted] (3,0)     -- (3,3);
        \draw[darkgray,dotted] (9,0)     -- (9,4);
        \draw[darkgray,dotted] (10.5,0)  -- (10.5,5);
        \draw[darkgray,dotted] (11.25,0) -- (11.25,6);

        \foreach \x/\lab in {0/{$0$}, 0.75/{$\frac{1}{16}$}, 1.5/{$\frac{1}{8}$},
        3/{$\frac{1}{4}$}, 9/{$\frac{3}{4}$},
        10.5/{$\frac{7}{8}$}, 11.25/{$\frac{15}{16}$}, 12/{$1$}}{
        \draw (\x,-0.08) -- (\x,0.08);
        \node[below, yshift=-2pt] at (\x,-0.08) {\lab};
        }
        \draw[very thick, Violet] (0,0)     -- (0.75,0);
        \draw[very thick, Violet] (0.75,1)  -- (1.5,1);
        \draw[very thick, Violet] (1.5,2)   -- (3,2);
        \draw[very thick, Violet] (3,3)     -- (9,3);
        \draw[very thick, Violet] (9,4)     -- (10.5,4);
        \draw[very thick, Violet] (10.5,5)  -- (11.25,5);
        \draw[very thick, Violet] (11.25,6) -- (12,6);

        \foreach \p in {(0,0), (0.75,1), (1.5,2), (3,3), (9,4), (10.5,5), (11.25,6), (12,6)}{
                \filldraw[Violet] \p circle (2pt);
            }
        \foreach \p in {(0.75,0), (1.5,1), (3,2), (9,3), (10.5,4), (11.25,5)}{
                \draw[Violet, fill=white] \p circle (2pt);
            }

    \end{tikzpicture}
\end{figure}
and
\begin{align*}
    \hcoord_m:[0,1] & \la [0,1]                                                                                                                                                                                   \\
    t               & \lm \begin{cases}
                              0                                                                                 & \for t < \frac{1}{2^{k+2}}                    \\
                              2^{k-\hidx_m(t)+2}\cdot\left(t-\frac{1}{2^{k-\hidx_m(t)+2}}\right)                & \for \frac{1}{2^{k+2}} \leq t < \frac{1}{4}   \\
                              2\cdot\left(t-\frac{1}{4}\right)                                                  & \for \frac{1}{4} \leq t < \frac{3}{4}         \\
                              2^{\hidx_m(t)-k+2}\cdot\left(t-\left(1-\frac{1}{2^{\hidx_m(t)-k+1}}\right)\right) & \for \frac{3}{4} \leq t < 1-\frac{1}{2^{k+2}} \\
                              1                                                                                 & \for t \geq 1-\frac{1}{2^{k+2}}
                          \end{cases}
\end{align*}

\begin{figure}[htbp]
    \begin{tikzpicture}[xscale=1.2,yscale=1.2]
        \draw (0,0) -- (12,0) node[right, font=\small] {$t$};
        \draw (0,0) -- (0,6) node[above,yshift=2pt, font=\small] {$\hcoord_7(t)$};
        \draw (-0.08,0) -- (0.08,0);
        \node[left, xshift=-2pt, font=\small] at (-0.08,0) {$0$};
        \draw (-0.08,6) -- (0.08,6);
        \node[left, xshift=-2pt, font=\small] at (-0.08,6) {$1$};

        \foreach \x in {0.375,0.75,1.5,3,9,10.5,11.25,11.625}{
                \draw[darkgray,dotted] (\x,0) -- (\x,6);
            }

        \foreach \x/\lab in {0/{$0$}, 0.375/{$\frac{1}{32}$}, 0.75/{$\frac{1}{16}$},
        1.5/{$\frac{1}{8}$}, 3/{$\frac{1}{4}$}, 9/{$\frac{3}{4}$},
        10.5/{$\frac{7}{8}$}, 11.25/{$\frac{15}{16}$},
        11.625/{$\frac{31}{32}$}, 12/{$1$}}{
        \draw (\x,-0.08) -- (\x,0.08);
        \node[below, yshift=-2pt] at (\x,-0.08) {\lab};
        }
        \draw[very thick, Violet] (0,0)        -- (0.375,0);
        \draw[very thick, Violet] (0.375,0)    -- (0.75,6);
        \draw[very thick, Violet] (0.75,0)     -- (1.5,6);
        \draw[very thick, Violet] (1.5,0)      -- (3,6);
        \draw[very thick, Violet] (3,0)        -- (9,6);
        \draw[very thick, Violet] (9,0)        -- (10.5,6);
        \draw[very thick, Violet] (10.5,0)     -- (11.25,6);
        \draw[very thick, Violet] (11.25,0)    -- (11.625,6);
        \draw[very thick, Violet] (11.625,6)   -- (12,6);

        \foreach \p in {(0.75,0), (1.5,0), (3,0), (9,0),
                (10.5,0), (11.25,0)}{
                \filldraw[Violet] \p circle (2pt);
            }
        \foreach \p in {(0.75,6), (1.5,6), (3,6), (9,6), (10.5,6), (11.25,6)}{
                \draw[Violet, fill=white] \p circle (2pt);
            }
    \end{tikzpicture}
\end{figure}
Of course, neither $\hidx_m$ nor $\hcoord_m$ is continuous; however, the discontinuity points of $\hidx_m$ are the same as those of $\hcoord_m$, it even holds that $\hidx_m+\hcoord_m$ is continuous. Given fixed $m,n\in \NN$ and a $\mathbf v=(v_1,\ldots,v_n)\in \{0,1,\ldots, m-1\}^n$, denote by $\iota_{\mathbf{v}}:I_1^n\to I_m^n$ the map $\iota_{\mathbf{v}}(t_1,\ldots,t_n)=(v_1+t_1,v_2+t_2,\ldots, v_n+t_n)$. This map $\iota_{\mathbf{v}}$ is the inclusion of a small ``side-length one'' cube into the larger ``side-length $m$'' cube, and in particular a graph map. For $t=(t_1,\ldots, t_n)\in[0,1]^n$ we denote by
\[\mathbf\hidx^n_m(t)=\left(\hidx_m(t_1),\ldots,\hidx_m(t_n)\right)\text{ and }\mathbf\hcoord^n_m(t)=\left(\hcoord_m(t_1),\ldots,\hcoord_m(t_n)\right).
\]
Given a graph $X$ and a graph map $f:I_m^n\to X$, with $m$ odd, we construct a map $\chop_f$ from  $[0,1]^n$ to $|N_1 X|$ as follows:
\begin{align*}
    \chop_f\from[0,1]^n & \la|N_1 X|                                                       \\
    t                   & \lm (\mathbf\hcoord^n_m(t),f\circ \iota_{\mathbf{\hidx}^n_m(t)})
\end{align*}
\begin{proposition}
    The map
    \begin{align*}
        \chop_m\from|N_m X| & \la|N_1 X|     \\
        (t,f)               & \lm \chop_f(t)
    \end{align*}
    is well defined and continuous for every odd $m\in\NN_{\geq 1}$; note that for $m=1$, i.e. $k=0$, the second and the fourth cases in the definitions of $\hidx_m$ and $\hcoord_m$ have empty range.
\end{proposition}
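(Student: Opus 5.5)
The plan is to define $\chop_m$ on the disjoint union $\bigsqcup_n [0,1]^n\times N_mX(\one^n)$ by $(t,f)\mapsto \chop_f(t)$ and check two things. First, each $\chop_f$ is continuous. Second, $\chop$ respects the three generating relations of the geometric realization: faces, degeneracies and connections. Continuity of $\chop_m$ on the quotient then follows from the universal property of the quotient topology. Note that $\chop_f$ is well defined as a map of sets, since $f\circ\iota_{\mathbf v}\in N_1X(\one^n)$ for every $\mathbf v\in\{0,\ldots,m-1\}^n$.

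\textbf{Continuity of $\chop_f$.} Away from the discontinuity set of $\hidx_m$, both $\mathbf\hidx^n_m$ and $\mathbf\hcoord^n_m$ are locally constant, respectively continuous, so $\chop_f$ is continuous there. At a jump point $s$ of $\hidx_m$ in some coordinate $t_i$, the index jumps from $j$ to $j+1$ while $\hcoord_m$ jumps from $1$ to $0$; this is what ``$\hidx_m+\hcoord_m$ is continuous'' encodes. The two one-sided limits are then
\[
\big((\ldots,1,\ldots),\,f\circ\iota_{\mathbf v}\big)\quad\text{and}\quad\big((\ldots,0,\ldots),\,f\circ\iota_{\mathbf v+e_i}\big).
\]
By the face relation these equal $\big((\ldots),\csd_i^1(f\circ\iota_{\mathbf v})\big)$ and $\big((\ldots),\csd_i^0(f\circ\iota_{\mathbf v+e_i})\big)$. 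Both face maps equal $f$ restricted to the same $(n-1)$-subcube of $I_m^n$, with $i$-th coordinate $v_i+1$. So the two limits agree in $|N_1X|$.

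Near $t_i=0$ and $t_i=1$ the constant pieces ($\hcoord_m=0$ with $\hidx_m=0$, and $\hcoord_m=1$ with $\hidx_m=m-1$) cause no trouble. Corners where several coordinates jump at once are handled by applying the same argument coordinatewise. Concretely, check continuity on each closed box of the finite grid induced by the breakpoints: on such a box $\chop_f$ is given by a single continuous formula, extended by the limiting values of $\hcoord_m$, and composed with the continuous characteristic map of one cube. Adjacent boxes agree on their common faces by the computation above, so the pasting lemma applies.

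\textbf{Compatibility with the relations.}
\begin{itemize}
\item \emph{Faces:} $\csd_i^\ve f=f\circ\gd_i^\ve$ with $\gd_i^\ve$ inserting $\ve m$. We have $\hidx_m(\ve)=\ve(m-1)$ and $\hcoord_m(\ve)=\ve$. Hence $\chop_f(t_1,\ldots,\ve,\ldots)=\big((\ldots,\ve,\ldots),f\circ\iota_{\mathbf v}\big)$ with $v_i=\ve(m-1)$, which by the face relation equals $\big((\ldots),f\circ\iota_{\mathbf v}\circ\gd_i^\ve\big)=\chop_{\csd_i^\ve f}(t)$.
\item \emph{Degeneracies:} $(f\circ\gs_i)\circ\iota_{\mathbf v}=(f\circ\iota_{\mathbf v'})\circ\gs_i$, where $\mathbf v'$ omits $v_i$, so the degeneracy relation in $|N_1X|$ gives the claim.
\item \emph{Connections:} one needs $(f\circ\gg_i^\ve)\circ\iota_{\mathbf v}$ and the point $\mathbf\hcoord(t)$ to correspond to $\chop_f$ evaluated at $(\ldots,m^\ve(t_i,t_{i+1}),\ldots)$.
\end{itemize}

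\textbf{Main obstacle: the connection relation.} Since $\hidx_m$ and $\hcoord_m$ are monotone, $\hidx_m(m^\ve(t_i,t_{i+1}))=m^\ve(\hidx_m(t_i),\hidx_m(t_{i+1}))$, and the same holds for $\hidx_m+\hcoord_m$. This lets us split into cases.
\begin{itemize}
\item If $\hidx_m(t_i)\neq\hidx_m(t_{i+1})$, say $\hidx_m(t_i)<\hidx_m(t_{i+1})$ with $\ve=0$ (max), then $\gg_i^0\circ\iota_{\mathbf v}$ is, on the relevant unit cube, $\iota_{\mathbf w}\circ\gs_i$ for a suitable $\mathbf w$. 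So $(f\circ\gg_i^0)\circ\iota_{\mathbf v}$ is a degeneracy of $f\circ\iota_{\mathbf w}$, and the degeneracy relation finishes the case.
\item If $\hidx_m(t_i)=\hidx_m(t_{i+1})$, then $\gg_i^\ve\circ\iota_{\mathbf v}=\iota_{\mathbf w}\circ\gg_i^\ve$ and $\hcoord_m$ commutes with max/min, so the connection relation in $|N_1X|$ applies.
\end{itemize}
I expect the careful bookkeeping in this case split to be the bulk of the work, together with the boundary cases where a value of $\hcoord_m$ equals $0$ or $1$ at a jump.
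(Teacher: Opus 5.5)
Your proposal is correct and follows the paper's proof. Both first show each $\chop_f$ is continuous by gluing across the jump points of $\hidx_m$ via $\csd_i^1(f\circ\iota_{\mathbf v})=\csd_i^0(f\circ\iota_{\mathbf v+e_i})$, and then check the face, degeneracy and connection relations using $\hidx_m(\ve)=\ve(m-1)$ and $\hcoord_m(\ve)=\ve$. The differences are minor: you use the pasting lemma on closed boxes where the paper uses convergent sequences, and you actually carry out the connection case (unequal indices give a degeneracy rather than a connection), which the paper only asserts is analogous.
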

For $m>0$ even, we define $\chop_m\letbe \chop_{m+1}\circ |\crefr_m|$.
\begin{proof}
    We start by showing that $\chop_f$ is continuous for a fixed graph map $f\from I_m^n\to X$ with $m=2k+1$. Let
    $$
        p_0=0,~p_1=\frac{1}{2^{k+1}},~ \ldots , p_k=\frac{1}{4},~ p_{k+1}=\frac{3}{4},~\ldots , p_{m-1} = 1-\frac{1}{2^{k+1}},~ p_m=1
    $$
    be the discontinuity points and boundary points of the domain of $\hidx_m$ and $\hcoord_m$. We will show the continuity of $\chop_f$ via sequential continuity, for this assume we are given a convergent sequence $(x^{(j)}_1,x^{(j)}_2,\ldots,x^{(j)}_n)_{j\in\NN}$ in $[0,1]^n$ with limit $x=(x_1,\ldots,x_n)$. We will show
    \[
        \lim{j\to\infty} \chop_f(x^{(j)}) = \chop_f(x).
    \]
    The restriction of $\chop_f$ to each half-open cube $W=[p_{i_1},p_{i_1+1})\times \cdots \times [p_{i_n},p_{i_n+1})$ is continuous, since restricted to each of these subcubes $\hidx_m$ is constant and $\hcoord_m$ is continuous. Note that the boundary of each of these half-open intervals can be chosen to be closed if $p_{i_j+1}=1$. Therefore, the only nontrivial case is when there is at least one coordinate of $x$ which is a discontinuity point. Let $W=[p_{i_1},p_{i_1+1})\times \cdots \times [p_{i_n},p_{i_n+1})$ be a subcube containing a convergent subsequence $(y^{(j)})=(y_1^{(j)},\ldots,y_n^{(j)})$ of $x^{(j)}$, then $x$ has to lie in the closure of this subcube. For simplicity, we may assume, up to permutation of the entries, that
    $$x=(p_{i_1+1},\ldots p_{i_l+1},p_{i_{l+1}},\ldots p_{i_{l+s}},x_{l+s+1},\ldots x_{n}),$$
    with $x_j\in (p_{i_j},p_{i_j+1})$ possibly $l$ or $s$ equal to zero.
    \[
        \chop_f(x) = \left(\Big(\underbrace{0,\ldots 0}_{l+s \text{ times}},\hcoord_m(x_{l+s+1}),\ldots,\hcoord_m(x_n)\Big), f\circ \iota_{((i_1+1),\ldots ,(i_{l}+1),(i_{l+1}),\ldots (i_n) )}\right)
    \]

    Since all elements of $(y^{(j)})$ are contained in $W$ it follows that
    $\mathbf{\hidx}^n_m(y^{(j)})=(i_1,\ldots,i_n)$ for all $j$. We calculate
    \begin{align*}
        \lim{j\to \infty} \chop_f(y^{(j)}) & = \lim{j\to \infty} \left( \left(\hcoord_m(y_1^{(j)}),\ldots,\hcoord_m(y_n^{(j)})\right), f\circ\iota_{(i_1,\ldots,i_n)} \right)                                                                             \\
                                           & = \left( \Big(\underbrace{1, \dots, 1}_{l \text{ times}},\underbrace{0, \dots, 0}_{s \text{ times}},\hcoord_m(x_{l+s+1}),\ldots,\hcoord_m(x_n)\Big), f\circ\iota_{(i_1,\ldots i_n)} \right)                  \\
                                           & = \left( \Big(\underbrace{0, \dots, 0}_{s \text{ times}},\hcoord_m(x_{l+s+1}),\ldots,\hcoord_m(x_n)\Big), \csd_1^1\circ \cdots \circ\csd_l^1(f\circ\iota_{(i_1,\ldots i_n)}) \right)                         \\
                                           & = \left( \Big(\underbrace{0, \dots, 0}_{s \text{ times}},\hcoord_m(x_{l+s+1}),\ldots,\hcoord_m(x_n)\Big), \csd_1^0\circ \cdots \circ\csd_l^0(f\circ\iota_{(i_1+1,\ldots i_l+1,i_{l+1},\ldots, i_n)}) \right) \\
                                           & = \left(\Big(\underbrace{0,\ldots 0}_{l+s \text{ times}},\hcoord_m(x_{l+s+1}),\ldots,\hcoord_m(x_n)\Big), f\circ \iota_{((i_1+1),\ldots ,(i_{l}+1),(i_{l+1}),\ldots (i_n) )}\right)                          \\
                                           & = \chop_f(x).
    \end{align*}
    This shows that $\chop_f$ is continuous. Recall that $|N_m X|=\left( \bigsqcup_{n\in\NN} [0,1]^n\times N_m X(\one^n) \right)/\sim$; to show that $\chop_m$ is well-defined we have to show that if $(x,f)\sim(y,g)$, then $\chop_f(x)\sim\chop_g(y)$, which is equivalent to showing that
    \begin{align}
        \left(\,\chop_{ \csd_i^\ve f}(t_1,\ldots ,t_{n-1})\right) & \sim \left(\,\chop_f(t_1,\ldots, t_{i-1}, \ve, t_i,\ldots, t_{n-1})\right)        \label{eq:chopfac}                                \\
        \left(\,\chop_{ \css_i f}(t_1,\ldots ,t_{n+1})\right)     & \sim \left(\,\chop_f(t_1,\ldots, t_{i-1}, t_{i+1},\ldots, t_{n+1})\right)             \label{eq:chopdeg}                            \\
        \left(\,\chop_{ \csg_i^\ve f}(t_1,\ldots ,t_{n+1})\right) & \sim \left(\,\chop_f(t_1,\ldots, t_{i-1}, m^\ve(t_i,t_{i+1}), t_{i+2},\ldots, t_{n+1})\right)                    \label{eq:chopcon}
    \end{align}
    for all $1\leq i\leq n$, $\ve\in\{0,1\}$, $t_i\in[0,1]$ and $f\in N_m X(\one^n)$. We calculate \eqref{eq:chopfac}, the equalities \eqref{eq:chopdeg} and \eqref{eq:chopcon} can be shown with analogous calculations.
    \begin{align*}
        \left(\,\chop_f(t_1,\ldots, t_{i-1}, \ve, t_i,\ldots, t_{n-1})\right) & =\left( \mathbf{\hcoord}^n_m(t_1,\ldots, t_{i-1},\ve, t_i,\ldots, t_{n-1}), f\circ \iota_{\mathbf{\hidx}_m^n(t_1,\ldots, t_{i-1},\ve, t_i,\ldots, t_{n-1})}\right)
    \end{align*}
    Since $\hidx_m(\ve)=\ve\cdot (m-1)$ and $\hcoord_m(\ve)=\ve$, this is equal to
    \begin{align*}
         & =\left( \mathbf{\hcoord}^{i-1}_m(t_1,\ldots, t_{i-1}),\ve,\mathbf{\hcoord}_m^{n-i}(t_i,\ldots, t_{n-1}), f\circ \iota_{\mathbf{\hidx}_m^{i-1}(t_1,\ldots, t_{i-1}),\ve\cdot(m-1),\mathbf{\hidx}_m^{n-i}(t_i,\ldots, t_{n-1})}\right)           \\
         & = \left( \mathbf{\hcoord}^{i-1}_m(t_1,\ldots, t_{i-1}),\mathbf{\hcoord}_m^{n-i}(t_i,\ldots, t_{n-1}),\csd_i^\ve (f\circ \iota_{\mathbf{\hidx}_m^{i-1}(t_1,\ldots, t_{i-1}),\ve\cdot(m-1),\mathbf{\hidx}_m^{n-i}(t_i,\ldots, t_{n-1})) }\right) \\
         & =\left( \mathbf{\hcoord}^{n-1}_m(t_1,\ldots, t_{n-1}),\csd_i^\ve(f)\circ \iota_{\mathbf{\hidx}_m^{n-1}(t_1,\ldots, t_{n-1}) }\right)                                                                                                           \\
         & =\chop_{\csd_i^\ve(f)}(t_1,\ldots,t_{n-1}).
    \end{align*}

\end{proof}
Recall that the map $\gref_m: I_{m+2}\to I_m$ induces a map of cubical sets $\cref_m: N_m X \to N_{m+2} X$ given in degree $n$ by precomposition with $\gref_m^n: I_{m+2}^n \to I_m^n$.
\begin{proposition}
    The following diagram commutes
    \[
        \begin{tikzcd}
            \vert N_m X\vert  \ar[r,"|\cref_m|"] \ar[dr,"\chop_m"] & \vert N_{m+2} X\vert \ar[d,"\chop_{m+2}"] \\
            &  \vert N_1 X\vert.
        \end{tikzcd}
    \]
\end{proposition}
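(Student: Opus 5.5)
The plan is to check the identity pointwise on representatives $(t,f)$ with $f\in N_mX(\one^n)$, first for $m=2k+1$ odd and then to deduce the even case. Since $|\cref_m|(t,f)=(t,f\circ\gref_m^n)$, we must show
\[
  \bigl(\mathbf\hcoord^n_{m+2}(t),\, f\circ\gref_m^n\circ\iota_{\mathbf\hidx^n_{m+2}(t)}\bigr)\sim\bigl(\mathbf\hcoord^n_m(t),\, f\circ\iota_{\mathbf\hidx^n_m(t)}\bigr)
\]
in $|N_1X|$. Both $\gref_m^n$ and $\iota_{\mathbf v}$ are box products of one-dimensional maps, and $\hidx,\hcoord$ act coordinatewise. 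So the whole comparison reduces to a one-variable analysis of $\gref_m\circ\iota_j\colon I_1\to I_m$ for $j\in\{0,\ldots,m+1\}$, combined with the identifications in the geometric realization.

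\textbf{One-dimensional bookkeeping.} Write $m+2=2(k+1)+1$.
\begin{itemize}
  \item For $1\le j\le m$ we have $\gref_m\circ\iota_j=\iota_{j-1}$.
  \item For $j=0$ and $j=m+1$, the map $\gref_m\circ\iota_j$ is constant, with value $0$ and $m$ respectively.
\end{itemize}
Next I will compare the step functions directly from their definitions. For $\frac{1}{2^{k+2}}\le t<1-\frac{1}{2^{k+2}}$ one has $\hidx_{m+2}(t)=\hidx_m(t)+1$. Plugging this into the formula for $\hcoord_{m+2}$, the exponent $(k+1)-\hidx_{m+2}(t)+2$ equals $k-\hidx_m(t)+2$, so $\hcoord_{m+2}(t)=\hcoord_m(t)$ there; the middle interval $[\frac14,\frac34)$ works the same way. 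On the outer strips the behaviour is as follows.
\begin{itemize}
  \item For $t<\frac{1}{2^{k+2}}$: $\hidx_{m+2}(t)=0$ and $\hidx_m(t)=0$, while $\hcoord_m(t)=0$ and $\hcoord_{m+2}(t)$ is arbitrary in $[0,1)$.
  \item For $t\ge 1-\frac{1}{2^{k+2}}$: $\hidx_{m+2}(t)=m+1$ and $\hidx_m(t)=m-1$, while $\hcoord_m(t)=1$.
\end{itemize}

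\textbf{Assembling $n$ coordinates.} Fix $t$ and split the coordinates into the set $S$ of indices lying in an outer strip and the remaining ones.
\begin{itemize}
  \item On the remaining coordinates, the cube index drops by one under $\gref_m$ and the coordinates $\hcoord$ agree.
  \item On coordinates in $S$, the left-hand $1$-cube $f\circ\gref^n_m\circ\iota_{\mathbf\hidx^n_{m+2}(t)}$ is constant in that direction. Hence it equals $\css_i$ applied to the appropriate face of $f\circ\iota_{\mathbf\hidx^n_m(t)}$: the $0$-face in the lower strip and the $1$-face in the upper strip, where the face is taken at $0$ or $m$ in $f$.
\end{itemize}
The degeneracy relation in $|N_1X|$ deletes the arbitrary coordinate $\hcoord_{m+2}(t_i)$. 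The face relation, with the value $\hcoord_m(t_i)\in\{0,1\}$ on the right-hand side, then identifies both sides. This mirrors the face computation in the previous proof, and I expect this step to be the main obstacle: not conceptually, but in writing the iterated face/degeneracy identifications cleanly for several coordinates simultaneously. I would handle it by induction on $|S|$, peeling off one coordinate at a time.

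\textbf{Even case.} For $m$ even, $\chop_m=\chop_{m+1}\circ|\crefr_m|$ and $\chop_{m+2}=\chop_{m+3}\circ|\crefr_{m+2}|$. A direct computation gives
\[
  \grefr_m\circ\gref_{m+1}=\gref_m\circ\grefr_{m+2}\colon t\mapsto \max(0,\min(t-1,m)).
\]
Hence $\cref_{m+1}\circ\crefr_m=\crefr_{m+2}\circ\cref_m$. Therefore
\[
  \chop_{m+2}\circ|\cref_m|=\chop_{m+3}\circ|\cref_{m+1}|\circ|\crefr_m|=\chop_{m+1}\circ|\crefr_m|=\chop_m,
\]
where the middle equality is the odd case applied to $m+1$.
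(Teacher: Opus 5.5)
Your proof is correct and follows the paper's argument for odd $m$: you compare $\hidx_{m+2},\hcoord_{m+2}$ with $\hidx_m,\hcoord_m$ coordinatewise, observe that $\gref_m\circ\iota_j$ is degenerate exactly on the outer strips, and then identify both sides via the degeneracy and face relations in $|N_1X|$, one coordinate at a time. Treating the whole strip $t<\frac{1}{2^{k+2}}$ is slightly cleaner than the paper's restriction to $(\frac{1}{2^{k+3}},\frac{1}{2^{k+2}})$, and your even-case reduction via $\grefr_m\circ\gref_{m+1}=\gref_m\circ\grefr_{m+2}$ is also correct; it covers a case the paper's proof leaves implicit.
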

\begin{proof}
    First observe that it holds that $\gref_m(\hidx_{m+2}(t)) - \hidx_m(t)$ is non-zero only for $t\in [1-\frac{1}{2^{k+2}},1]$ and
    $\hcoord_{m+2}(t)- \hcoord_m(t)$ vanishes everywhere except on the intervals $(\frac{1}{2^{k+3}},\frac{1}{2^{k+2}})$ and $[1-\frac{1}{2^{k+2}},1-\frac{1}{2^{k+3}})$. On the first of these $\hcoord_m$ is $0$ while $\hcoord_{m+2}$ is not, and on the second $\hcoord_m$ is $1$ while $\hcoord_{m+2}$ is not. Note that therefore, if $(t_1,\ldots t_n)\in [0,1]^n$ contains no coordinate in one of the intervals $(\frac{1}{2^{k+3}},\frac{1}{2^{k+2}})$ and $[1-\frac{1}{2^{k+2}},1]$, then $\chop_{m+2}(|\cref_m|((t_1,\ldots,t_n),f))=\chop_m(((t_1,\ldots,t_n),f))$. Now assume we are given $((t_1,\ldots t_n),f)\in | N_m X|$ with $t_i$ in one of the latter two intervals. Calculating
    \begin{align*}
        \chop_{m+2}(|\cref_m|((t_1,\ldots,t_n),f)) & = \chop_{m+2}((t_1,\ldots,t_n),f \circ \gref_m^n)                                                                        \\
                                                   & = (\mathbf{\hcoord}_{m+2}^n(t_1, \ldots, t_n), f \circ \gref_m^n\circ \iota_{\mathbf{\hidx}_{m+2}^n(t_1, \ldots, t_n)} )
    \end{align*}
    Assume first $t_i\in (\frac{1}{2^{k+3}},\frac{1}{2^{k+2}})$, then $\hidx_{m+2}(t_i)=0$ and hence $\gref_m^n \circ \iota_{\mathbf{\hidx}_{m+2}^n(t_1, \ldots, t_n)} $ is degenerate on the $i$-th coordinate, since $\gref_m(0)=\gref_m(1)=0$.
    Similarly, if $t_i\in [1-\frac{1}{2^{k+2}},1]$, then $\hidx_{m+2}(t_i)=m+1$ and hence $\gref_m^n \circ \iota_{\mathbf{\hidx}_{m+2}^n(t_1, \ldots, t_n)} $ is degenerate on the $i$-th coordinate, since $\gref_m(m+1)=\gref_m(m+2)=m$.
    Thus, in either case, it holds that
    \begin{align*}
         & \chop_{m+2}(|\cref_m|((t_1,\ldots,t_n),f)) \\ &= \left(\mathbf{\hcoord}_{m+2}^{n-1}(t_1,\ldots,t_{i-1},t_{i+1},\ldots,t_n),\csd_i^\ve\left(f \circ \gref^{n}_{m}\circ \iota_{\mathbf{\hidx}_{m+2}^{n-1}(t_1,\ldots,t_n)} \right)\right)
    \end{align*}
    and
    \begin{align*}
        \chop_m(((t_1,\ldots,t_n),f)) & = \left(\mathbf{\hcoord}_{m}^{n-1}(t_1,\ldots,t_{i-1},t_{i+1},\ldots,t_n),\csd_i^\ve\left(f \circ \iota_{\mathbf{\hidx}_{m}^{n-1}(t_1,\ldots,t_{i-1},t_{i+1},\ldots,t_n)} \right)\right)
    \end{align*}
    where $\ve=0$ if $t_i\in (\frac{1}{2^{k+3}},\frac{1}{2^{k+2}})$ and $\ve=1$ if $t_i\in [1-\frac{1}{2^{k+2}},1]$.
    We iteratively apply this argument until all coordinates do not lie in one of the intervals $(\frac{1}{2^{k+3}},\frac{1}{2^{k+2}})$ and $[1-\frac{1}{2^{k+2}},1]$.
\end{proof}
The topological realization functor $|\cdot|:\cSet\to \Top$ admits a right adjoint, namely the cubical singular functor $\mathrm{Sing}_\square$ assigning to a topological space $X$ the cubical set given in degree $n$ by the set of continuous maps $[0,1]^n\to X$. Hence $\vert \cdot \vert $ preserves colimits and therefore
\[
    |NX|=\colim{}\left(|N_1 X|\xrightarrow{|\crefl_1|} |N_2 X|\xrightarrow{|\crefr_2|}\cdots  \right).
\]
For even $m$, it holds by definition that $\chop_{m+1}\circ|\crefr_m| = \chop_m$.
For odd $m$, by $\crefr_{m+1}\circ\crefl_m = \cref_m$, we have
\[
    \chop_{m+2}\circ|\crefr_{m+1}\circ\crefl_m| = \chop_{m+2}\circ|\cref_m| = \chop_m.
\]
Therefore, by the universal property of the colimit, there is a map $\chop: |NX|\to |N_1 X|$ such that the following diagram commutes
\[
    \begin{tikzcd}
        \vert N_1 X\vert \arrow[r, "|\crefl_1|"]\arrow[dr, "\chop_1"]  &  \vert N_2 X\vert\arrow[r, "|\crefr_2|"]\ar[d,"\chop_2"] &\cdots \ar[r] & \vert NX\vert \ar[dashed, "\chop",dll]  \\
        & \vert N_1 X\vert
    \end{tikzcd}
\]
For $m\geq 1$, denote with $\cref^m:N_1 X\to N_m X$ the composition $\cref_{m-2}\circ \cref_{m-4}\circ \cdots \circ \cref_1$ for $m$ odd and $\crefl_{m-1}\circ\cref^{m-1}$ for $m$ even, with the convention $\cref^2= \crefl_1$.
\begin{proposition}\label{prop:homotopy_inverse_Psi}
    The map $\chop_m: |N_m X|\to |N_1 X|$ is a homotopy inverse of $|\cref^m|$ and $\chop$ is a homotopy inverse of $|\cref|\from |N_1 X|\to |NX|$.
\end{proposition}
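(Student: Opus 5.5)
We first compute the composite $\chop_m\circ|\cref^m|$ and show it is homotopic to the identity of $|N_1X|$; the other composite is then handled through weak equivalences. For odd $m$, the previous proposition and induction give $\chop_m\circ|\cref^m| = \chop_1$. We claim $\chop_1=\mathrm{id}_{|N_1X|}$. Indeed, for $m=1$ we have $k=0$, so $\hidx_1\equiv 0$ and $\iota_{\mathbf 0}=\mathrm{id}$. Hence $\chop_1(t,f)=(\mathbf{\hcoord}_1^n(t),f)$, where $\hcoord_1$ is the piecewise linear map that is $0$ on $[0,\tfrac14]$, equals $2(t-\tfrac14)$ on $[\tfrac14,\tfrac34]$, and is $1$ on $[\tfrac34,1]$. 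If $\chop_1$ is not literally the identity, it is a reparametrization. The straight-line homotopy $\hcoord^{(s)}=(1-s)\mathrm{id}+s\hcoord_1$ fixes $0$ and $1$, acts coordinatewise, and commutes with $\max$ and $\min$ because it is monotone. The same well-definedness check as in the proof of continuity of $\chop_m$ (relations for faces, degeneracies and connections) then shows that $(t,f)\mapsto(\mathbf{\hcoord}^{(s)}(t),f)$ is a well-defined homotopy on $|N_1X|$. So $\chop_m\circ|\cref^m|\simeq\mathrm{id}$ for odd $m$. For even $m$ we have $\cref^m=\crefl_{m-1}\circ\cref^{m-1}$ and $\chop_m=\chop_{m+1}\circ|\crefr_m|$. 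Moreover $\crefr_m\circ\crefl_{m-1}=\cref_{m-1}$, since $\gref=\grefl\circ\grefr$ and precomposition reverses order. Therefore
\[\chop_m\circ|\cref^m|=\chop_{m+1}\circ|\cref_{m-1}|\circ|\cref^{m-1}|=\chop_{m-1}\circ|\cref^{m-1}|\simeq\mathrm{id}.\]

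Next, by Theorem~\ref{magic}, each $|\crefl_m|$ and $|\crefr_m|$ is a weak equivalence, hence so is $|\cref^m|$. Since $\chop_m\circ|\cref^m|\simeq\mathrm{id}$, two-out-of-three on homotopy groups shows that $\chop_m$ is a weak equivalence. As $|N_mX|$ and $|N_1X|$ are CW-complexes, Whitehead's theorem makes $|\cref^m|$ a homotopy equivalence with some inverse $g$. Then
\[\chop_m\simeq\chop_m\circ|\cref^m|\circ g\simeq g,\]
so $\chop_m$ is a two-sided homotopy inverse.

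For $\chop$, the canonical maps $N_mX\to NX$ compose with $\cref^m$ (up to the alternating pattern, for the appropriate $m$) to give $\cref$. Then $\chop\circ|\cref|=\chop_1$, which is homotopic to the identity. By Theorem~\ref{magic}, $\tref$ is a weak equivalence and $|NX|$ is a CW-complex, so the same Whitehead argument shows that $\chop$ is a homotopy inverse of $|\cref|$.

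The main obstacle is the first step: verifying that the reparametrization homotopy respects the connection relations. This requires the homotopy to commute with $\max$ and $\min$, which holds because the same monotone map is applied in every coordinate.
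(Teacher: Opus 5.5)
Your proposal is correct and follows essentially the paper's argument. You reduce both composites $\chop_m\circ|\cref^m|$ and $\chop\circ|\cref|$ to $\chop_1$, deform $\chop_1$ to the identity by the coordinatewise straight-line homotopy $s\,\hcoord_1+(1-s)\,\mathrm{id}$, and obtain two-sidedness from the weak equivalences $|\crefl_m|$, $|\crefr_m|$, $\tref$ together with Whitehead's theorem; your even-$m$ bookkeeping via $\crefr_m\circ\crefl_{m-1}=\cref_{m-1}$ and your check that the homotopy respects the connection relations (it fixes $0,1$ and is non-decreasing) fill in details the paper leaves implicit. The only slip is the opening claim that $\chop_1=\mathrm{id}_{|N_1X|}$, which is false since $\hcoord_1\neq\mathrm{id}$; you should simply assert $\chop_1\simeq\mathrm{id}$, which is what you actually prove and all that is needed.
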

\begin{proof}
    Firstly observe that $\chop_1:|N_1 X|\to |N_1 X|$ is given for $((t_1,\ldots t_n),f)\in |N_1 X|$ as
    \[
        \chop_1(((t_1,\ldots t_n),f))=(\mathbf{\hcoord}^n_1(t_1,\ldots,t_n),f)
    \]
    and recall the map $\hcoord_1$ is given by
    \[
        \hcoord_1(t)=\begin{cases}
            0                                & t<\frac{1}{4}                   \\
            2\cdot\left(t-\frac{1}{4}\right) & \frac{1}{4}\leq t < \frac{3}{4} \\
            1                                & \frac{3}{4}\leq t,
        \end{cases}
    \]
    which is homotopic to the identity on $[0,1]$ relative $\{0,1\}$, e.g. via $H(s,t)= s\cdot \hcoord_1(t) + (1-s)\cdot t$. Applying $H$ to each coordinate $t_i$ gives rise to a homotopy of $\chop_1$ to $\id{|N_1 X|}$, so $\chop_1 \simeq \id{|N_1 X|}$. Since $\chop_m\circ|\cref^m|=\chop_1$ and similarly $\chop\circ|\cref|=\chop_1$, it follows that $\chop_m$ and $\chop$ are homotopy left inverses to $|\cref^m|$ and $|\cref|$ respectively. Since both $|\cref^m|$ and $|\cref|$ are weak equivalences, it follows from the fact that weak equivalences satisfy the two out of three property that $\chop_m$ and $\chop$ are weak equivalences, and thus it follows from Whitehead's theorem that also $|\cref^m|\circ \chop_m$ and $|\cref|\circ \chop$ are homotopic to the identity.
\end{proof}

\subsection{On relative A-homotopy groups}
We start by observing that, for a graph map $f:(I_m^n,\del I_m^n,|\horn^n|_{m})\to(X,A,*)$, the restriction of $\chop_f\from[0,1]^n \to |N_1 X|$ to the boundary $\del[0,1]^n$ is contained in $|N_1 A|$ and that $\chop_f$ maps the Horn $|\horn^n|$ to the basepoint $*$. The following theorem states that this identification gives rise to an isomorphism between relative $A$-homotopy groups of $(X,A,*)$ and the relative classical homotopy groups of $(|N_1X|,|N_1A|,*)$.
\begin{theorem}\label{lem:relativemagic}
    Let $X$ be a graph and $A$ be a subgraph of $X$ and $f:(I_m^n,\del I_m^n,|\horn^n|_{m})\to(X,A,*)$ be a representative of a homotopy class $[f]\in\AA_n(X,A,*)$ and let
    \begin{align*}
        \chop\circ|\bar{f}|: \left( [0,1]^n,\del[0,1]^n,|\horn^n|  \right) & \la (|N_1 X|,|N_1 A|,*)           \\
        (t_1,\ldots, t_n)                                                  & \lm \chop(((t_1,\ldots, t_n),f)).
    \end{align*}
    Then $\hat{\magic}_n:\AA_n(X,A,*)\cong \pi_n(|N_1 X|,|N_1 A|,*)$ with $[f]\mapsto [\chop\circ|\bar{f}|]$ is an isomorphism, in the sense that there is an isomorphism of long exact sequences
    \[
        \begin{tikzcd}[column sep=1.4em, row sep=2.5em]
            \cdots \arrow[r]
            & \AA_n(A,*) \arrow[r, "\iota_*"] \arrow[d, "\chop_*\circ\magic_n"]
            & \AA_n(X,*) \arrow[r, "j_*"] \arrow[d, "\chop_*\circ\magic_n"]
            & \AA_n(X,A,*) \arrow[r, "\delta"] \arrow[d, "\hat{\magic}_n"']
            & \AA_{n-1}(A,*) \arrow[r] \arrow[d, "\chop_*\circ\magic_{n-1}"]
            & \cdots
            \\
            \cdots \arrow[r]
            & \pi_n(|N_1 A|,*) \arrow[r, "\iota_*"]
            & \pi_n(|N_1 X|,*) \arrow[r, "j_*"]
            & \pi_n\big(|N_1 X|,|N_1 A|,*\big) \arrow[r, "\delta"]
            & \pi_{n-1}(|N_1 A|,*) \arrow[r]
            & \cdots.
        \end{tikzcd}
    \]
\end{theorem}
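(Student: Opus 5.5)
The plan is to check that $\hat{\magic}_n$ is well defined, make the ladder of long exact sequences commute, and then apply the five lemma. Here $\magic_n\colon \AA_n(X,*)\to\pi_n(|NX|,*)$ denotes the natural isomorphism of Theorem~\ref{magic}, and $\chop_*$ the isomorphism $\pi_n(|NX|,*)\to\pi_n(|N_1X|,*)$ induced by the homotopy inverse $\chop$ of Proposition~\ref{prop:homotopy_inverse_Psi}. So the outer vertical maps are already known to be bijections (isomorphisms of groups for $n\geq1$, of pointed sets for $n=0$).

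\textbf{Step 1: well-definedness of $\hat{\magic}_n$.} By construction, $\chop\circ|\bar f|$ equals $\chop_f$ after reparametrization. For $m$ even we first pass through $|\crefr_m|$.
\begin{itemize}
\item The observation preceding the theorem shows that $\chop_f$ sends $\del[0,1]^n$ into $|N_1A|$ and $|\horn^n|$ to $*$. The reason is that a cell $f\circ\iota_{\mathbf v}$ lies in $N_1A$ whenever $\iota_{\mathbf v}$ meets the boundary face of $I_m^n$ in the relevant direction, and the face of that cell is then evaluated there.
\item Invariance under precomposition with $\grefl_m^n,\grefr_m^n$ follows from the commutative triangle $\chop_{m+2}\circ|\cref_m|=\chop_m$, together with $\chop_m=\chop_{m+1}\circ|\crefr_m|$ for $m$ even. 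Together these make $\chop$ factor through the colimit, so all $\cat{LR}_n$-representatives give the same map into $|N_1X|$.
\item For a relative homotopy $H\colon(I_m^n\bprod I_k,\dots)\to(X,A,*)$, first precompose with $\cat{LR}$-maps so that $k=m$. Then $\chop_H\colon[0,1]^{n+1}\to|N_1X|$ is a topological homotopy of triples between $\chop_{H(-,0)}$ and $\chop_{H(-,m)}$, by the face identity \eqref{eq:chopfac} with $\ve=0,1$ in the last coordinate.
\end{itemize}

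\textbf{Step 2: commutativity of the ladder.}
\begin{itemize}
\item The two squares involving $\iota_*$ and $j_*$ commute by naturality. Both $\chop$ and $\magic$ are natural in graph maps, and $j_*$ regards an element of $\AA_n(X,*)$, represented by $f\colon(I_m^n,\del I_m^n)\to(X,*)$, as a triple with $A\ni *$. The resulting maps are literally equal: $\chop\circ|\bar f|$ is the same map on both sides.
\item For the boundary square, restricting the relative representative to the face $t_n=1$ gives $\chop_f\circ\gd_n^1=\chop_{\csd_n^1 f}=\chop_{(\gd_n^1)^*f}$, again by \eqref{eq:chopfac}. This is exactly $\delta\circ\hat{\magic}_n=\chop_*\magic_{n-1}\circ(\gd_n^1)^*$.
\end{itemize}

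\textbf{Step 3: the five lemma, and the main obstacle.} For $n\geq 3$ all terms in the relevant window are groups and the maps are homomorphisms, so the five lemma applies directly. The main obstacle is the low degrees. In degree $n=2$, the relative term $\AA_2(X,A,*)$ is a group, but the fact that $\hat{\magic}_2$ is a homomorphism must be checked. I would do this by comparing concatenation in the first coordinate with the topological sum: the concatenated cube $I_{2m}^n$ realizes, under $\chop$, to a map homotopic to the juxtaposition, via the same reparametrization argument as Step 1. In degree $n=1$, $\AA_1(X,A,*)$ is only a pointed set, so I would use the extended five lemma for exact sequences of groups and pointed sets with group actions, via the action of $\AA_1(X,*)$ on $\AA_1(X,A,*)$. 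Alternatively, surjectivity and injectivity can be argued by hand: a relative path class is determined by its endpoint component in $A$ and a path class in $X$, and both sides are described compatibly. Once homomorphism-compatibility and the low-degree argument are in place, the five lemma gives that $\hat{\magic}_n$ is bijective, hence an isomorphism of long exact sequences.
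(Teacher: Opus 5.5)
Your outline (well-definedness, commuting ladder, five lemma for $n\geq 2$, a separate argument for $n=1$) is the paper's. Composing directly into $|N_1X|$, instead of first mapping to $\pi_n(|NX|,|NA|,*)$ and applying the five lemma twice, is a harmless shortcut. The gaps sit exactly where the paper does its real work.

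\textbf{The homomorphism property.} The five lemma needs $\hat{\magic}_n$ to be a homomorphism in \emph{every} degree $n\geq 2$. The surjectivity chase uses $\hat{\magic}_n(j_*(b)\cdot c)=j_*(\chop_*\magic_n(b))\cdot\hat{\magic}_n(c)$. Nothing makes this automatic for $n\geq 3$, yet you flag it only for $n=2$. The paper proves it for all $n\geq 2$ using the auxiliary maps $F,G\colon I_{2m}^{n+1}\to X$ and the homotopy $H$ in $|NX|$.

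Your reparametrization idea can do this uniformly, and more simply, in $|N_1X|$, but it must be made explicit. For $M$ odd, $\chop_M(\cdot,h)$ is the cellwise realization of $h$ on the subdivided cube $[0,M]^n$, precomposed with the continuous, coordinatewise monotone surjection $\mathbf{\hidx}^n_M+\mathbf{\hcoord}^n_M$. Hence two maps that differ by a face-preserving, coordinatewise monotone reparametrization are homotopic as maps of triples, via a straight-line homotopy.

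You need the same fact in Step~1. There, your claim that all $\cat{LR}_n$-representatives give literally the same map into $|N_1X|$ is false. In general only the colimit maps $\crefl_m$ ($m$ odd) and $\crefr_m$ ($m$ even) are compatible with $\chop$ on the nose. For instance, take an edge $e\colon I_1\to X$ with $e(0)\neq e(1)$. Then $\chop_2\circ|\crefr_1|$ sends $((\tfrac12),e)$ to the vertex $e(1)$, whereas $\chop_1((\tfrac12),e)$ is the midpoint of the edge.

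\textbf{The case $n=1$} is only gestured at. The extended five lemma needs structure that is not available on the discrete side, all of which would have to be proved first:
\begin{itemize}
\item an action of $\AA_1(X,*)$ on $\AA_1(X,A,*)$;
\item exactness in the strong form: fibres of $\delta$ are orbits, and stabilisers are conjugates of the images of $\AA_1(A,a)$ for the various endpoints $a$;
\item equivariance of $\hat{\magic}_1$.
\end{itemize}
Your by-hand alternative omits the one nontrivial point, namely how to turn a topological relative homotopy into a discrete one.

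The paper argues directly. For surjectivity it uses cellular approximation: a cellular path in $|N_1X|$ from $*$ to $|N_1A|$ runs along edges, so it is homotopic to $\chop\circ|\bar g|$ for the discrete path $g$ listing its vertices. For injectivity it moves the endpoints of $f$ and $g$, within their common component of $A$, to a single vertex $b$. The loop $f\cdot g^{-1}$ is then null in $\pi_1(|N_1X|,*)$, hence null in $\AA_1(X,*)$ by the absolute isomorphism, which yields $f\simeq g$.
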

\begin{proof}
    Recall that the isomorphism $\magic_n:\AA_n(X,*)\to\pi_n(|NX|, *)$ is given by assigning to a homotopy class $[f]\in \AA_n(X,*)$ with representative $f:(I_m^n,\del I_m^n)\to (X,*)$ the homotopy class of
    \begin{align*}
        |\bar{f}|:([0,1]^n,\del[0,1]^n) & \la (|NX|,*)           \\
        (t_1,...,t_n)                   & \lm ((t_1,...,t_n),f).
    \end{align*}

    Now, since
    \[
        |\bar{f}|(t_1,\ldots, t_{i-1},\ve,t_{i+1},\ldots, t_n)= ((t_1,\ldots, t_{i-1},t_{i+1},\ldots, t_n),\csd_i^\ve f),
    \]
    it follows that if $f:(I_m^n,\del I_m^n,|\horn^n|_{m})\to(X,A,*)$ is a map of triples, then also $|\bar{f}|:\left( [0,1]^n,\del[0,1]^n,\hg{}{n}  \right)\to (|NX|,|NA|,*) $ is a  map of triples and similarly a homotopy
    \[
        H:(I_m^n\bprod I_k,\del I_m^n\bprod I_k,\hg{m}{n}\bprod I_k) \to (X,A,*)
    \]
    corresponds to a topological homotopy
    \[
        |\bar{H}|:([0,1]^{n+1},\del [0,1]^n\times [0,1],|\horn^n|\times [0,1]) \to (|NX|,|NA|,*).
    \]
    Therefore, the assignment $[f]\mapsto [|\bar{f}|]$ gives rise to a map of sets $\magic_n:\AA_n(X,A,*)\to \pi_n(|NX|,|NA|,*)$.

    We will show that this map is a group homomorphism for $n\geq 2$ (and a map of pointed sets for $n=1$). For that, let us consider $f,g:(I_m^n,\del I_m^n,|\horn^n|_{m})\to(X,A,*)$. Consider the maps $F,G:I_{2m}^{n+1}\to X$ defined by
    \[
        F(v_1,\ldots, v_{n+1})\letbe \begin{cases}
            f(v_1,\min(v_2,m),\ldots ,\min(v_n,m))                 & v_{1}\leq m       \\
            g(\min(v_1-m,v_{n+1}),\min(v_2,m),\ldots, \min(v_n,m)) & \text{otherwise}.
        \end{cases}
    \]
    and
    \[
        G(v_1,\ldots,v_{n+1})\letbe
        g(\min(\max(v_{n+1},v_1),m),\min(v_2,m),\ldots, \min(v_n,m)).
    \]
    Now consider the map $H:([0,1]^{n+1},\del [0,1]^{n}\times [0,1],|\hg{}{n}|\times [0,1])\to (|NX|,|NA|,*)$
    \[
        H(t_1,\ldots, t_{n+1})=\begin{cases}
            |\bar{F}|(2t_1,t_2,\ldots, t_{n+1})   & t_1\leq \frac{1}{2}  \\
            |\bar{G}|(2t_1-1,t_2,\ldots, t_{n+1}) & t_1\geq \frac{1}{2}.
        \end{cases}
    \]
    Observe that $H(t_1,\ldots, t_n,1)=|\overline{f\cdot g}|(\min(2t_1,1),t_2,\ldots, t_n)$, whereas $(t_1,\ldots, t_n)\mapsto H(t_1,\ldots, t_n,0)$ is homotopic to $|\bar{f}|\cdot |\bar{g}|$. Thus we overall get that $\magic_n([f\cdot g])=\magic_n([f])\cdot \magic_n([g])$.
    Now consider the diagram
    \[
        \begin{tikzcd}[column sep=1.4em, row sep=2.5em]
            \AA_n(X,*) \arrow[r, "j_*"] \arrow[d, "\magic_n"]
            & \AA_n(X,A,*) \arrow[r, "\delta"] \arrow[d, "\magic_n"']
            & \AA_{n-1}(A,*) \arrow[d, "\magic_{n-1}"]
            \\
            \pi_n(|NX|,*) \arrow[r, "j_*"]
            & \pi_n(|NX|,|NA|,*) \arrow[r, "\delta"]
            & \pi_{n-1}(|NA|,*)
        \end{tikzcd}
    \]
    It is immediate from the definition of $\magic_n$ and the maps $j_*$ that the left square commutes. Similarly, in either case the map $\delta$ is given by restriction to the top face, and hence the right square also commutes. Furthermore the map of pairs $\chop: (|NX|,|NA|,*) \to (|N_1 X|,|N_1 A|,*)$ gives rise to a map of long exact sequences of homotopy groups. Putting it all together we get that the following diagram commutes

    \[
        \begin{tikzcd}[column sep=1.4em, row sep=2.5em]
            \cdots \arrow[r]
            & \AA_n(A,*) \arrow[r, "\iota_*"] \arrow[d, "\magic_n"]
            & \AA_n(X,*) \arrow[r, "j_*"] \arrow[d, "\magic_n"]
            & \AA_n(X,A,*) \arrow[r, "\delta"] \arrow[d, "\magic_n"']
            & \AA_{n-1}(A,*) \arrow[r] \arrow[d, "\magic_{n-1}"]
            & \cdots
            \\
            \cdots \arrow[r]
            & \pi_n(|NA|,*) \arrow[r, "\iota_*"] \arrow[d, "\chop_*"]
            & \pi_n(|NX|,*) \arrow[r, "j_*"] \arrow[d, " \chop_*"]
            & \pi_n\big(|NX|,|NA|,*\big) \arrow[r, "\delta"] \arrow[d, " \chop_*"]
            & \pi_{n-1}(|NA|,*) \arrow[r] \arrow[d, "\chop_*"]
            & \cdots
            \\
            \cdots \arrow[r]
            & \pi_n(|N_1 A|,*) \arrow[r, "\iota_*"]
            & \pi_n(|N_1 X|,*) \arrow[r, "j_*"]
            & \pi_n\big(|N_1 X|,|N_1 A|,*\big) \arrow[r, "\delta"]
            & \pi_{n-1}(|N_1 A|,*) \arrow[r]
            & \cdots.
        \end{tikzcd}
    \]
    For $n\geq 2$, applying the five lemma twice, once for $\magic$ and once for $\chop$ we get that the vertical composition $\hat{\magic}_n\letbe\chop_* \circ \magic_n:\AA_n(X,A,*) \to \pi_n\big(|N_1 X|,|N_1 A|,*\big)$ is an isomorphism as desired.

    In degree $n=1$, we cannot apply the five lemma.

    We will first show surjectivity, for that let $f:([0,1],\{0,1\},\{0\})\to (|N_1 X|,|N_1 A|,*)$ be a representative of a homotopy class in $\pi_1(|N_1 X|,|N_1 A|,*)$. By cellular approximation, we can choose $f$ to be a cellular map. However, $1$-skeleton of $|N_1 X|$ is contains two one cell for each edge of $X$, one for each orientation. Listing the vertices in order of appearance after walking through $f$ can hence be associated to a graph map $g:I_m\to X$. Then $f$ can easily checked to by homotopic to $\chop\circ |\bar{g}|$.

    What is left to show is injectivity, assume we are given two paths
    $[f],[g]\in\AA_1(X,A,*)$ that get mapped by $\hat{\magic}_1$ to the same homotopy class. Then their endpoint has to be in the same connected component $A$, denote it by $B$. We may assume the representatives end in the same endpoint $b\in B$, by cellular approximation, we can even assume that $f,g$ satisfy that the homotopy from $\chop|\bar{f}|$ to $\chop|\bar{g}|$ is a homotopy relative endpoints. We can therefore consider the concatenation $f\cdot g^{-1}$, which first traverses $f$ and then $g$ in the oposite direction. This is a loop $\pi_1(X)$, however, since $\hat{\magic}_1([f])=\hat{\magic}_1([g])$, this loop $f\cdot g^{-1}$ is contractible relative endpoints. Therefore also the concatenation $g^{-1}\cdot f$, which is a loop starting and ending in $b$, is contractible relative endpoints. But now we have that
    \[
        f \simeq f \cdot c_b \simeq f \cdot g^{-1} \cdot g \simeq c_{*}\cdot g
    \]
    Where $\simeq$ denotes homotopy relative endpoints and $c_b$ and $c_*$ denote the constant paths.
\end{proof}
\subsection{The relative Hurewicz homomorphism}
The discrete Hurewicz homomorphism $\Hurewicz_n:\AA_n(X,*_X)\to H_n(X)$ of a graph $X$ has been introduced in \cite{Lutz:Higher} and has been shown to fulfill a discrete version of the classical Hurewicz theorem in \cite{Chris:Revisited}.
For a representative $f:I_m^n\to X$ of a homotopy class $[f] \in \AA_n(X,*_X)$ the Hurewicz homomorphism is given by
\[
    \Hurewicz_n([f])\letbe \left[ \sum_{\mathbf{v}\in [m-1]^n} f\circ \iota_\mathbf{v} \right] \in H_n(X),
\]
where, as before, the map $\iota_\mathbf{v}:I_1^n\to I_m^n$ is given by $\mathbf{x}\mapsto \mathbf{v}+\mathbf{x}$. Observe that the discrete Hurewicz homomorphism can also be obtained as the composition
\[
    \AA_n(X,*_X)\xrightarrow{\magic_n}\pi_n(|NX|, *_X)\xrightarrow{\chop_*}\pi_n(|N_1 X|, *_X)\xrightarrow{\classHurewicz}H_n(|N_1 X|)\cong H_n(X),
\]
where $\classHurewicz$ is the classical Hurewicz homomorphism.

Similarly, we can consider a discrete relative Hurewicz homomorphism $\relHurewicz_n:\AA_n(X,A,*)\to H_n(X,A)$ as the composition
\[
    \AA_n(X,A,*)\xrightarrow{\hat{\magic}_n}\pi_n(|N_1 X|,|N_1 A|,*)\xrightarrow{\classHurewicz}H_n(|N_1 X|,|N_1 A|)\cong H_n(X,A).
\]
Hence, as an immediate consequence of the classical theory of a pair we get
\begin{theorem}
    Let $n\geq 2$ and $(X,A)$ be a pair of simply connected graphs such that $H_i(X,A)=0$ for all $i<n$. Then $\AA_i(X,A,*)=0$ for all $i<n$ and $\relHurewicz_n:\AA_n(X,A,*)\to H_n(X,A)$ is an isomorphism.
\end{theorem}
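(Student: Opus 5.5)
The plan is to transport the statement to topology via Theorem~\ref{lem:relativemagic} and the homology identification $H_*(X,A)\cong H_*(|N_1X|,|N_1A|)$, and then apply the classical relative Hurewicz theorem to the CW-pair $(|N_1X|,|N_1A|)$.

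First, I will translate the hypotheses. Simple connectivity of the graphs $X$ and $A$ means $\AA_1(X,*)=\AA_1(A,*)=0$, and connectedness means $\AA_0$ is trivial. By Theorem~\ref{magic} together with the weak equivalence $\tref\colon |N_1X|\to|NX|$, or equivalently the isomorphisms $\chop_*\circ\magic_k$ appearing in the diagram of Theorem~\ref{lem:relativemagic}, the CW-complexes $|N_1X|$ and $|N_1A|$ are then path connected and simply connected. Since $|N_1A|$ is a subcomplex of $|N_1X|$, the pair $(|N_1X|,|N_1A|)$ is a CW-pair. The homological hypothesis becomes $H_i(|N_1X|,|N_1A|)=0$ for $i<n$, using the isomorphism $H_*(X,A)\cong H_*(|N_1X|,|N_1A|)$ stated after Theorem~\ref{magic}. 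I will also check that this isomorphism is the one induced by the identification of nondegenerate, non-connection cells.

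Second, I will invoke the classical relative Hurewicz theorem in the following form: for a CW-pair $(Y,B)$ with $B$ and $Y$ simply connected and $H_i(Y,B)=0$ for $i<n$, with $n\geq2$, one has $\pi_i(Y,B)=0$ for $i<n$, and $h\colon\pi_n(Y,B)\to H_n(Y,B)$ is an isomorphism. The hypothesis that $B$ is simply connected, together with $Y$ being simply connected, ensures that $(Y,B)$ is $1$-connected. This follows from the long exact sequence: $\pi_1(Y)=0$ and $\pi_0(B)\to\pi_0(Y)$ bijective give $\pi_1(Y,B)=0$. Therefore the version requiring a $1$-connected pair with simply connected $B$ applies, and no quotient by the $\pi_1$-action is needed.

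Third, I will pull the conclusion back. By Theorem~\ref{lem:relativemagic}, $\hat{\magic}_i\colon\AA_i(X,A,*)\to\pi_i(|N_1X|,|N_1A|,*)$ is a bijection for all $i$, including $i=1$, where the paper proves bijectivity separately. Hence $\AA_i(X,A,*)=0$ for $i<n$. Since $\relHurewicz_n$ is by definition the composite of $\hat{\magic}_n$, the classical $\classHurewicz$, and the homology isomorphism, it is a composite of three isomorphisms and hence an isomorphism.

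The main obstacle is only a bookkeeping issue: confirming that the homology isomorphism $H_n(|N_1X|,|N_1A|)\cong H_n(X,A)$ really holds in the relative setting. I would deduce it from the absolute case via the five lemma applied to the two long exact sequences of pairs, using naturality of the cellular identification with respect to the inclusion $A\subseteq X$. Everything else is formal.
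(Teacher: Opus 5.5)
Your proposal is correct and follows the same route as the paper. The paper defines $\relHurewicz_n$ as exactly the composite of $\hat{\magic}_n$, the classical Hurewicz map $\classHurewicz$, and the identification $H_n(|N_1X|,|N_1A|)\cong H_n(X,A)$, and then concludes in one line by citing the classical relative Hurewicz theorem (tom Dieck, Theorem 20.1.3). You additionally make explicit the checks the paper leaves implicit: that $(|N_1X|,|N_1A|)$ is a $1$-connected pair with simply connected subspace, that $\hat{\magic}_i$ is bijective also for $i=1$, and that the relative homology identification follows from the absolute one by naturality and the five lemma.
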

\begin{proof}
    Follows from \cite{Dieck:AT}, Theorem 20.1.3.
\end{proof}
\section{Constructing graphs for given A-homotopy groups}
\subsection{On coverings}
We will start by giving an equivalent definition of a covering. We will use this to define a graph analogue of the classical notion of a free and properly discontinuous action, in the sense that the quotient map of such an action is a covering. The author felt uncomfortable with the word ``discontinuous'' in the graph setting, for graphs such an action will be called a \fa{freely sparse action}. We start by giving an equivalent definition of a covering.
\begin{lemma}\label{lem:covering_Umg}
    A graph map $p:\tilde{X}\to X$ is a covering if and only if for every $x\in X$ it holds that
    \[
        p^{-1}(\Umg{2}{x})\cong \bigsqcup_{y\in p^{-1}(x)} \Umg{2}{y},
    \]
    where the isomorphism is given by the restriction of $p$ to each component indexed by $y\in p^{-1}(x)$.
\end{lemma}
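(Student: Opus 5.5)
The plan is to prove both directions directly from Definition~\ref{dfn:cov}, by comparing $1$-neighborhoods and $2$-neighborhoods. Throughout, $\Umg{2}{y}$ is the union of the $1$-neighborhoods of the vertices adjacent to $y$ (or equal to $y$). Its edges are therefore exactly the edges incident to a vertex at distance at most $1$ from $y$. Accordingly, I read $p^{-1}(\Umg{2}{x})$ as the subgraph of $\tilde X$ whose vertices map into $\Umg{2}{x}$ and whose edges map onto edges of $\Umg{2}{x}$.

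\emph{Covering $\Rightarrow$ decomposition.} Let $p$ be a covering and $x\in X$.
\begin{enumerate}
\item First I show that $p|_{\Umg{2}{y}}:\Umg{2}{y}\to\Umg{2}{x}$ is surjective for every $y\in p^{-1}(x)$. This follows by lifting edges in two steps, using the local isomorphism at $y$ and then at its neighbours.
\item Next I show that this restriction is injective. Two vertices of $\Umg{2}{y}$ with the same image are of the form $w_1\in\Umg{1}{u_1}$ and $w_2\in\Umg{1}{u_2}$ with $u_1,u_2\in\Umg{1}{y}$. If $p(u_1)=p(u_2)$, then local injectivity at $y$ gives $u_1=u_2$, and local injectivity at $u_1$ gives $w_1=w_2$. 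Otherwise $p(u_1),p(u_2),p(w)$ together with $x$ form a $4$-cycle (or a degenerate triangle) through $x$ in $X$. Here I use the right lifting property with respect to $\eta$: the path $y,u_1,w_1$ plus the lifted edge to $u_2$ is a lifted $I_3$. The lifting property closes the lift to a square, which forces $w_1=w_2$ via injectivity at $u_2$. The triangle case should already follow from the $1$-neighborhood isomorphism at $u_1$.
\item Then I show that the $\Umg{2}{y}$ for distinct $y\in p^{-1}(x)$ are disjoint and not joined by edges within $p^{-1}(\Umg{2}{x})$, and that they cover it. For covering: any vertex over $\Umg{2}{x}$ has a path of length at most $2$ to a vertex over $x$, obtained by lifting the reversed path via the $1$-neighborhood isomorphism. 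For disjointness: a common vertex or a connecting edge would yield two lifts of a path of length at most $2$ ending at the same point. Uniqueness of local lifts, which is the $1$-neighborhood isomorphism applied backwards, then forces the two preimages of $x$ to coincide.
\end{enumerate}

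\emph{Decomposition $\Rightarrow$ covering.} Assume the decomposition holds.
\begin{enumerate}
\item The $1$-neighborhood condition at $v$ is immediate: take $x=p(v)$ and restrict the isomorphism $\Umg{2}{v}\cong\Umg{2}{x}$ to $1$-neighborhoods, which it preserves since it is a graph isomorphism sending $v$ to $x$.
\item For the lifting property, take a square $I_1^2\to X$ together with a lift of $\inc$, which is a lifted path $y_0,y_1,y_2,y_3$. Set $x=p(y_1)$. All four corners of the square lie in $\Umg{2}{x}$ (indeed in $\Umg{1}{x}$ up to one step), and the lifted path lies in the component $\Umg{2}{y_1}$. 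Pulling back the square along the isomorphism $\Umg{2}{y_1}\cong\Umg{2}{x}$ gives a square in $\tilde X$ extending the path, in particular containing the closing edge $\{y_3,y_0\}$.
\item Degenerate squares, where $p$ collapses some edges, are handled the same way, since graph maps may collapse edges.
\end{enumerate}

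The main obstacle is the injectivity step in the forward direction. This is exactly where the $\eta$-lifting property is needed, since local $1$-isomorphism alone allows, for example, $C_8\to C_4$-type behaviour to be fine but fails for squares. I would settle this step first by a careful case analysis on whether the intermediate vertices coincide.
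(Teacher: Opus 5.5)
\textbf{Gap: disjointness in step 3 of the forward direction.} Suppose $v\in\Umg{2}{y_1}\cap\Umg{2}{y_2}$, reached by paths $y_1,u_1,v$ and $y_2,u_2,v$. The reversed paths starting at $v$ are lifts of $p(v),p(u_1),x$ and of $p(v),p(u_2),x$. Whenever $p(u_1)\neq p(u_2)$ these are \emph{different} paths in $X$, so uniqueness of local lifts (the $1$-neighborhood isomorphism) tells you nothing. No argument using only the $1$-neighborhood condition can work here, and your own example shows it. The map $C_8\to C_4$, $i\mapsto i \bmod 4$, is an isomorphism on every $1$-neighborhood. Yet $\Umg{2}{0}$ and $\Umg{2}{4}$ in $C_8$ share the vertices $2$ and $6$. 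The vertex $2$ is reached by $0,1,2$ and $4,3,2$, which project to the distinct paths $0,1,2$ and $0,3,2$. So this step needs the $\eta$-lifting property just as much as your injectivity step does.

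\textbf{How to repair it.} You have two options.
\begin{enumerate}
\item Do what the paper does: lift the three sides $y_1,u_1,v,u_2$ of the square $x,p(u_1),p(v),p(u_2)$ through $\eta$. The closing edge gives $u_2\in\Umg{1}{y_1}$. Then $y_1,y_2\in\Umg{1}{u_2}$ have the same image, so $y_1=y_2$.
\item More economically, note that your step 2 holds at every vertex of $\tilde X$. Since $y_1,y_2\in\Umg{2}{v}$ and $p(y_1)=p(y_2)$, injectivity of $p$ on $\Umg{2}{v}$ gives $y_1=y_2$.
\end{enumerate}
Connecting edges then need no separate argument once you check that the $\Umg{2}{y}$ cover the edges. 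Each edge of $p^{-1}(\Umg{2}{x})$ has an endpoint $a$ with $p(a)\in\Umg{1}{x}$. That endpoint has a neighbour $y$ over $x$, so the edge lies in $\Umg{2}{y}$.

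\textbf{The triangle remark in step 2 is also wrong.} The triangle case does not follow from the $1$-neighborhood isomorphism at $u_1$, because $1$-neighborhoods are stars and do not see triangles. For example, $C_6\to C_3$, $i\mapsto i\bmod 3$, is an isomorphism on $1$-neighborhoods, yet $1,4\in\Umg{2}{0}$ have the same image. Simply drop the case split: your $\eta$-lifting argument already covers the triangle case, since the square $I_1^2\to X$ may collapse edges.

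\textbf{What is fine.} With these changes the forward direction works. It proves injectivity directly, instead of via the paper's path-lifting inverse $q$. Your converse direction is correct and slightly cleaner than the paper's: centring at $y_1$ puts the lifted path in $\Umg{2}{y_1}$ automatically.
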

\begin{proof}
    Firstly, assume that $p:\tilde{X}\to X$ satisfies the condition above. We will show that $p$ is a covering map. The fact that for every $x\in X$ and $y\in p^{-1}(x)$ it holds that $p\vert_{\Umg{1}{y}}\colon \Umg{1}{y}\to\Umg{1}{x}$ is an isomorphism is immediate, since $\Umg{1}{y}\subset \Umg{2}{y}$ and the restriction of $p$ to $\Umg{2}{y}$ is an isomorphism. It remains to be shown that $p$ has the right lifting property with respect to $\eta:I_3\to I_1^2$, assume we are given a diagram
    \[
        \begin{tikzcd}
            I_3 \arrow[d, "\eta"]\arrow[r,"\tilde{f}"]
            & \tilde{X} \arrow[d, "p"]
            \\
            I_1^2 \arrow[r, "f"]
            & X.
        \end{tikzcd}
    \]
    Let $x\letbe f(0,0)$ and $y\letbe \tilde{f}(0)$. Recall that $\eta(0)=(0,0)$, therefore it holds that $x=p(y)$. Also, observe that $f(I_1^2)\subset \Umg{2}{x}$, and, by the commutativity of the diagram, $\tilde{f}(I_3)\subseteq \Umg{2}{y}$. Consider the composition $ h\letbe(p\vert_{\Umg{2}{y}})^{-1}\circ f\from I_1^2\to \tilde{X} $, then the diagram
    \[
        \begin{tikzcd}
            I_3 \arrow[d, "\eta"]\arrow[r,"\tilde{f}"]
            &\Umg{2}{y} \subseteq \tilde{X} \arrow[d, "p"]
            \\
            I_1^2 \arrow[r, "f"]\arrow[dashed,ru,"h"]
            & \Umg{2}{x} \subseteq X.
        \end{tikzcd}
    \]
    commutes and hence $h$ is a lift as desired.

    Conversely, assume that $p$ is a covering map, let $x\in X$ and $y\in p^{-1}(x)$, we will show that $p\vert_{\Umg{2}{y}}:\Umg{2}{y}\to\Umg{2}{x}$ is an isomorphism. We do this by constructing an inverse, let $z\in \Umg{2}{x}$, then there is a map $\gamma_{z}:I_2\to X$ with $\gamma_z(0)=x$ and $\gamma_z(2)=z$ since $z \in \Umg{2}{x}$. Since $p$ is a covering map there exists a unique lift $\tilde{\gamma}_{z}:I_2 \to \tilde{X}$ of $\gamma_{z}$ starting at $y$ by \cite[4.5]{Udit:Fundamentalgroupoid}. Note that $\tilde{\gamma}_{z}(2)\in\Umg{2}{y}$, we will show that $\tilde{\gamma}_{z}(2)$ is invariant under the choice of path $\gamma$, for this, let $\gamma'$ be another path starting at $x$ and ending at $z$. Then consider the map $f:I_1^2\to X$ that sends $(0,0)\mapsto x$, $(1,0)\mapsto \gamma_z(1)$, $(0,1)\mapsto \gamma'_z(1)$ and $(1,1)\mapsto z$. This map admits a lift to $\tilde{X}$, using the uniqueness of the lifts of paths, it then has to hold that $\tilde{\gamma}'_z(2)=\tilde{\gamma}_z(2)$.
    Thus, the following map is well defined
    \begin{align*}
        q\from\Umg{2}{x} & \la \Umg{2}{y}             \\
        z                & \lm \tilde{\gamma}_{z}(2).
    \end{align*}

    We show that $q$ is a graph map, for that let $e=\{z_1,z_2\}\in E(\Umg{2}{x})$. Then either $z_1$ or $z_2$ is in $\Umg{1}{x}$, assume without loss of generality $z_1\in \Umg{1}{x}$. Consider the maps
    \begin{align*}
        f\from I_1^2 & \la X               & \tilde{f}\from I_3 & \la \tilde{X}               \\
        (0,0)        & \lm z_1             & 0                  & \lm q(z_1)                  \\
        (1,0)        & \lm x               & 1                  & \lm y                       \\
        (1,1)        & \lm \gamma_{z_2}(1) & 2                  & \lm \tilde{\gamma}_{z_2}(1) \\
        (0,1)        & \lm z_2             & 3                  & \lm \tilde{\gamma}_{z_2}(2)
    \end{align*}
    The existence of a lift of
    \[
        \begin{tikzcd}
            I_3 \arrow[d, "\eta"]\arrow[r,"\tilde{f}"]
            & \tilde{X} \arrow[d, "p"]
            \\
            I_1^2 \arrow[r, "f"]
            & X.
        \end{tikzcd}
    \]
    implies that there is an edge $\{q(z_1),q(z_2)\} \in E(p^{-1}(\Umg{2}{x}))$, hence $q$ is a graph map. By construction $p\circ q=\id{\Umg{2}{x}}$ and $q(x)=y$, so $q$ is inverse to $p\vert_{\Umg{2}{y}}$ and the latter is an isomorphism onto $\Umg{2}{x}$.
    We have thus shown that $p^{-1}(\Umg{2}{x})$ is the union of 2-neighborhoods around the elements of $p^{-1}(x)$, it remains to show that this union is disjoint. For that, assume that there is a vertex $v\in \Umg{2}{y_1}\cap \Umg{2}{y_2}$ for $y_1, y_2\in p^{-1}(x)$. Then there is a graph map $\hat{f}:I_3\to \tilde{X}$ with $\hat{f}(0)=y_1$, $\hat{f}(2)=v$ and $\hat{f}(3)\in \Umg{1}{y_2}$. Since $\hat{f}(3)\in \Umg{1}{y_2}$, there is an edge $\{p\circ\hat{f}(3),x\}\in \E{X}$, we can hence extend $p\circ\hat{f}$ to a graph map $f:C_4=I_1^2\to X$ such that
    \[
        \begin{tikzcd}
            I_3 \arrow[d, "\eta"]\arrow[r,"\tilde{f}"]
            & \tilde{X} \arrow[d, "p"]
            \\
            I_1^2 \arrow[r, "f"]
            & X.
        \end{tikzcd}
    \]
    commutes. Now the existence of a lift of $f$ to $\tilde{X}$ implies that the distance between $\hat{f}(3)$ and $y_1$ is $1$ and hence both $y_1$ and $y_2$ lie in the 1-neighborhood of $\hat{f}(3)$. Thus, since the restriction of $p$ to 1-neighborhoods is an isomorphism, it follows that $y_1=y_2.$
\end{proof}
\begin{definition}[freely sparse action]
    Let $G$ be a group and $X$ a graph. A left action of $G$ on $V(X)$ is called \fa{freely sparse} if the following conditions are satisfied.
    \begin{enumerate}
        \item For every edge $\{x_1,x_2\} \in E(X)$ and every $g\in G$, it holds that $\{g\triangleright x_1 ,g\triangleright x_2\}\in E(X)$
        \item For every $x\in X$ and every $g\in G$, the intersection $\Umg{2}{x}\cap \Umg{2}{g\triangleright x}$ is nonempty if and only if $g$ is the identity in $G$.
    \end{enumerate}
\end{definition}
\begin{remark}
    Note that condition (2) in particular implies that the action of $G$ on $V(X)$ is free: if $g\triangleright x = x$, then $x \in \Umg{2}{x}\cap \Umg{2}{g\triangleright x} \neq \emptyset$, hence $g=e$.
\end{remark}
\begin{lemma}
    A left group action of a group $G$ on $V(X)$, the vertex set of a graph $X$, is freely sparse if and only
    \begin{enumerate}
        \item left multiplication with $g$ is a graph map for every $g\in G$ and
        \item for every $x\in V(X)$, it holds that $d(x,g\triangleright x)\geq 5$ for all $g\in G\setminus\{e\}$.
    \end{enumerate}
\end{lemma}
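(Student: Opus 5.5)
The plan is to compare the two conditions separately. The first condition of the definition of a freely sparse action is exactly the statement that left multiplication by $g$ sends edges to edges. Since the action is by a group, $g$ is a bijection on $V(X)$, so this is the same as $g\triangleright(\cdot)$ being a graph map: a graph map may also collapse an edge, but injectivity excludes that. The substance of the lemma is therefore the equivalence, for fixed $x$ and $g\neq e$, of
\[
\Umg{2}{x}\cap\Umg{2}{g\triangleright x}=\emptyset \quad\Longleftrightarrow\quad d(x,g\triangleright x)\geq 5.
\]
The trivial direction of condition (2) is automatic, since $x$ lies in $\Umg{2}{x}\cap\Umg{2}{e\triangleright x}$.

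The key observation is a description of the vertex set of $\Umg{2}{x}$. A vertex of $\Umg{1}{w}$ is $w$ or a neighbour of $w$. Hence $V(\Umg{2}{x})$ consists of the vertices at distance $\leq 2$ from $x$: they are $w$ or neighbours of $w$, where $w=x$ or $w$ is a neighbour of $x$.

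Both directions then follow by the triangle inequality along shortest paths.
\begin{itemize}
\item If $v\in\Umg{2}{x}\cap\Umg{2}{g\triangleright x}$, then $d(x,g\triangleright x)\leq d(x,v)+d(v,g\triangleright x)\leq 4$. So $d\geq 5$ forces the intersection to be empty.
\item Conversely, suppose $d(x,g\triangleright x)\leq 4$, including the case of distance $0$. Choose a shortest path $x=u_0,u_1,\ldots,u_\ell=g\triangleright x$ with $\ell\leq 4$, and let $j=\min(2,\ell)$. Then $u_j$ has distance $\leq 2$ to both endpoints, so the intersection is nonempty.
\end{itemize}
If $x$ and $g\triangleright x$ lie in different components, then $d=\infty$ and both neighbourhoods are disjoint, which is consistent with both conditions.

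The only point needing care is the vertex-set description of $\Umg{2}{x}$. This is because $\Umg{2}{x}$ is defined as a union of (non-induced) subgraphs, so one should check that it really contains every vertex at distance exactly $2$. It does: such a vertex is a neighbour of some neighbour $w$ of $x$, hence lies in $\Umg{1}{w}$. The intersection condition only concerns vertices, so the missing induced edges are irrelevant.
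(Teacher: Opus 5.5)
Your proposal is correct and follows the same route as the paper. The paper's proof simply states that condition (1) restates the first axiom, and that the two $2$-neighbourhoods meet exactly when $d(x,g\triangleright x)<5$. You supply the details it leaves implicit: injectivity of $g\triangleright(\cdot)$ ruling out collapsed edges, the identification of the vertex set of $\Umg{2}{x}$ with the ball of radius $2$, and the triangle-inequality/midpoint argument.
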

\begin{proof}
    (1) is simply a restatement of the first condition in the definition of a freely sparse action. (2) is a reformulation of the second condition: The intersection of the 2-neighboorhoods of $x$ and $g\triangleright x$ is nonemtply of and only if the distance between $x$ and $g\triangleright x$ is lesser than $5$.
\end{proof}
\begin{lemma}\label{lem:FundGrpCovering}
    Let $(X,*_X)$ be a connected, based graph with a freely sparse left action of a group $G$, then the quotient map $q:(X,*_X)\to (G\backslash X,*_ {G\backslash X})$ is a regular covering and there is an isomorphism
    \[
        \Phi:\faktor{\AA_1(G\backslash X,*_{G\backslash X})}{q_*(\AA_1(X,*_X))}\cong G
    \]
    such that $\Phi$ is compatible with the action of $G$ on the fiber $q^{-1}(*_{G\backslash X})$ and the monodromy action in the sense that for every $x\in q^{-1}(*_{G\backslash X})$ and every $[\gamma]\in \AA_1(G\backslash X,*_{G\backslash X})$, it holds that
    \[
        (h \triangleright *_X) \triangleleft [\gamma] = (h\cdot \Phi([\gamma]q_*(\AA_1(X,*_X))))\triangleright ( *_X).
    \]
\end{lemma}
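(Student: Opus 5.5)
We first show that $q$ is a covering by verifying the criterion of Lemma~\ref{lem:covering_Umg}. Fix $x\in X$ and write $[x]$ for its class in $G\backslash X$. We will show that $q$ restricts to an isomorphism $\Umg{2}{x}\to\Umg{2}{[x]}$, and that $q^{-1}(\Umg{2}{[x]})$ is the disjoint union of the translates $\Umg{2}{g\triangleright x}$. Recall that the quotient has an edge between two classes precisely when some representatives are adjacent.

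\emph{Surjectivity of $q|_{\Umg{2}{x}}$.} A path of length at most $2$ from $[x]$ lifts step by step. We translate each representing edge by a group element so that it starts at the previously lifted vertex, which is possible by condition (1).

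\emph{Injectivity and edges.} Suppose $u,v\in\Umg{2}{x}$ with $v=g\triangleright u$. Then $d(u,g\triangleright u)\le 4$, so condition (2) (distance at least $5$) forces $g=e$. For an edge $\{[u],[v]\}$ in $\Umg{2}{[x]}$, pick adjacent representatives and translate them so that one endpoint lies in $\Umg{1}{x}$, or in $\Umg{2}{x}$ as appropriate. The other endpoint then lies in $\Umg{2}{x}$ as well, so by injectivity the edge comes from an edge inside $\Umg{2}{x}$. Some care is needed with edges between two vertices both at distance $2$ from $x$. These are not edges of $\Umg{2}{x}$, since $\Umg{2}{x}$ only contains edges incident to $\Umg{1}{x}$, and one checks the same convention holds downstairs.

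\emph{Disjointness.} If $\Umg{2}{g\triangleright x}\cap\Umg{2}{h\triangleright x}\neq\emptyset$, apply $h^{-1}$ and use condition (2) to get $g=h$. Every preimage of a vertex of $\Umg{2}{[x]}$ lies in some translate $\Umg{2}{g\triangleright x}$, and the fibre over $[x]$ is exactly $G\triangleright x$.

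Next we construct $\Phi$. The deck group $\mathrm{Aut}(q)$ contains $G$ and acts simply transitively on the fibre $q^{-1}(*_{G\backslash X})=G\triangleright *_X$. Transitivity is immediate, and freeness follows from the Remark. By the Galois correspondence, $q$ is therefore regular, so $N\letbe q_*(\AA_1(X,*_X))$ is normal. Define $\Phi([\gamma]N)$ to be the unique $g\in G$ with $*_X\triangleleft[\gamma]=g\triangleright *_X$, which exists because the fibre is a free $G$-orbit. This is well defined modulo $N$: a loop in $N$ is the image of a loop in $X$, so its unique lift from $*_X$ is that loop and it ends at $*_X$. Injectivity modulo $N$ is the converse: if the lift of $\gamma$ from $*_X$ is closed, it is a loop in $X$ mapping to $\gamma$. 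Surjectivity follows by connectedness of $X$. Take a path from $*_X$ to $g\triangleright *_X$; its image is a loop whose monodromy sends $*_X$ to $g\triangleright *_X$.

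For the compatibility formula, note that deck transformations commute with path lifting. If $\tilde\gamma$ is the lift of $\gamma$ from $*_X$, then $h\triangleright\tilde\gamma$ is a graph map by condition (1) and lifts $\gamma$ from $h\triangleright *_X$. Uniqueness of lifts \cite[4.5]{Udit:Fundamentalgroupoid} then gives
\[
(h\triangleright *_X)\triangleleft[\gamma]=h\triangleright(*_X\triangleleft[\gamma])=(h\cdot\Phi([\gamma]N))\triangleright *_X .
\]

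Finally, $\Phi$ is a homomorphism. Concatenating $\gamma$ and $\gamma'$, the lift of $\gamma\cdot\gamma'$ is $\tilde\gamma$ followed by $\Phi(\gamma)\triangleright\tilde\gamma'$. It therefore ends at $\Phi(\gamma)\Phi(\gamma')\triangleright *_X$.

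The main obstacle is the first part: the precise bookkeeping showing that $q|_{\Umg{2}{x}}$ is an isomorphism onto $\Umg{2}{[x]}$, rather than merely a bijection on vertices. This requires matching the non-induced edge convention of $2$-neighbourhoods with the edges of the quotient graph, and it is where the bound $d\ge 5$ is used in full strength.
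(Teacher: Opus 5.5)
Your proof is correct and follows essentially the same route as the paper. You obtain the covering property from the $2$-neighbourhood criterion of Lemma~\ref{lem:covering_Umg} and regularity from transitivity of the deck action by $G$. You then define the monodromy map to $G$ and show it is a surjective homomorphism with kernel $q_*(\AA_1(X,*_X))$, using uniqueness of lifts and translated lifts $g\triangleright\tilde\gamma$. Beyond the paper, you spell out the $2$-neighbourhood bookkeeping and explicitly verify the compatibility formula, both of which the paper only asserts.

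One small ordering point: invariance of $\Phi$ on cosets needs the homomorphism property, or the fact that deck transformations commute with lifts, and not merely that loops in $q_*(\AA_1(X,*_X))$ lift to closed loops. You prove both facts later, so nothing is actually missing.
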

\begin{proof}
    Since $G$ acts freely and sparsely on $X$, for every $x\in X$, the preimage of the neighborhood can be characterized as
    \[
        q^{-1}(\Umg{2}{q(x)})= \bigsqcup_{g\in G} \Umg{2}{g\triangleright x}.
    \]
    Therefore, by \ref{lem:covering_Umg}, it holds that $q$ is a covering. Furthermore, there is a canonical identification between $q^{-1}(*_{G\backslash X})$ and $G$ where we identify $x\in q^{-1}(*_{G\backslash X})$ with the element $g\in G$ such that $g\triangleright *_X=x$. Note that this element $g$ is unique for a given $x$ since the action of $G$ on $X$ is free. Using this, we consider the map
    \begin{align*}
        \AA_1(G\backslash X,*_{G\backslash X}) & \la q^{-1}(*_{G\backslash X}) \\
        [\gamma]                               & \lm *_X\triangleleft [\gamma]
    \end{align*}
    where $*_X\triangleleft [\gamma]$ denotes the monodromy action of $[\gamma]$ on $*_X$, i.e., the endpoint of the unique lift of $\gamma$ starting at $*_X$. Using the identification of $q^{-1}(*_{G\backslash X})$ and $G$ above, we obtain a map $\Psi:\AA_1(G\backslash X,*_{G\backslash X})\to G$ defined by the equality $\Psi([\gamma])\triangleright *_X = *_X\triangleleft [\gamma]$. We will show that $\Psi$ is an epimorphism of groups with kernel $q_*(\AA_1(X,*_X))$. Firstly, let $[\gamma_1],[\gamma_2]\in \AA_1(G\backslash X,*_{G\backslash X})$ and let $g_i=\Psi([\gamma_i])$. The unique lift of $\gamma_1\cdot\gamma_2$ starting at $*_X$ is the concatenation of the lift of $\gamma_1$ starting at $*_X$ and the lift of $\gamma_2$ starting at $*_X\triangleleft [\gamma_1] = g_1\triangleright *_X$. Let $\tilde{\gamma}_2:I_n\to X$ be the lift of $\gamma_2$ starting at $*_X$ and consider the path \begin{align*}
        \tilde{\gamma}'_2:I_n & \la X                                      \\
        t                     & \lm g_1\triangleright \tilde{\gamma}_2(t).
    \end{align*}

    Then, by the uniqueness of lifts of paths for coverings (cf.\ \cite[4.5]{Udit:Fundamentalgroupoid}), this second lift $\tilde{\gamma}'_2$ is the lift of $\gamma_2$ starting at $g_1\triangleright *_X$. Its endpoint is then
    \[
        g_1\triangleright (g_2\triangleright *_X) = (g_1g_2)\triangleright *_X~.
    \]
    This shows $\Psi([\gamma_1\cdot\gamma_2]) = g_1g_2 = \Psi([\gamma_1])\Psi([\gamma_2])$, i.e. $\Psi$ is a group homomorphism.

    To see that $\Psi$ is surjective, let $g\in G$. Since $X$ is connected, there exists a path $\tilde{\gamma}$ in $X$ from $*_X$ to $g\triangleright *_X$. Then $\gamma=q\circ \tilde{\gamma}$ is a loop in $G\backslash X$ based at $*_{G\backslash X}$, and its unique lift starting at $*_X$ is exactly $\tilde{\gamma}$. Therefore, $*_X\triangleleft [\gamma] = g\triangleright *_X$, which implies $\Psi([\gamma])=g$.

    Finally, we determine the kernel of $\Psi$. A loop $[\gamma]\in \AA_1(G\backslash X,*_{G\backslash X})$ is in $\ker(\Psi)$ if and only if $\Psi([\gamma])=e$, the identity in $G$, i.e. $*_X\triangleleft [\gamma] = e\triangleright *_X = *_X$. This holds if and only if the lift of $[\gamma]$ starting at $*_X$ is a loop in $X$, i.e. an element of $\AA_1(X,*_X)$.

    Since the group $G$ acts transitively on the fiber $q^{-1}(*_{G\backslash X})$ by deck transformations, $q$ is a regular covering. The existence of the isomorphism $\Phi$ follows from the first isomorphism theorem for groups.
\end{proof}

\subsection{Construction of $BG$}
We will give a description of Eilenberg-MacLane graphs $K(G,1)$, that we will denote $BG$. We chose this notation because of the formal similarity of the construction to Milnor's construction of classifying spaces $B\mathcal{G}$ for a topological group $\mathcal{G}$. At first, one might be inclined to generalize the construction below to group objects in the category of graphs, however, the group objects turn out to be disjoint unions of complete graphs, and hence homotopy equivalent to a discrete graph. We therefore restrict our attention to discrete graphs.
\begin{definition}\label{def:join}
    For graphs $X$ and $Y$ and a natural number $k\in \mathbb{N}$ the \fa{$k$-join of $X$ and $Y$,} $X\join{k} Y$, is the colimit of the diagram
    \[
        \begin{tikzcd}
            &X\times Y\ar[dl,"\pr{2}",swap]\ar[dr,"\iota_0"] &&X\times Y\ar[dl,"\iota_k",swap]\ar[dr,"\pr{1}"]\\
            \phantom{X\times}Y\phantom{X\times}&&X\times Y\times I_k &&\phantom{X\times}X,\phantom{X\times}
        \end{tikzcd}
    \]
    where $\iota_0$ and $\iota_k$ are the maps which map $(x,y)$ to $(x,y,0)$ and $(x,y,k)$ respectively.
\end{definition}
Put differently, $X\join{k}Y$ is the graph with vertices
\begin{align*}
    V(X\join{k}Y)=\faktor{V(X\times Y\times I_k)}{\sim}
\end{align*}
where $\sim$ is the equivalence relation generated by
\[
    (x_1,y_1,0)\sim(x_2,y_1,0) \quad\text{and}\quad
    (x_1,y_1,k)\sim(x_1,y_2,k) \quad\text{for all }(x_1,y_1),(x_2,y_2)\in V(X\times Y).
\]
The edges are given by
\[
    E(X\join{k}Y)=\left\{\, \{u,v\} \,\middle\vert\, u\neq v \text{ and there are representatives of $u$ and $v$ connected in } X\times Y\times I_k\right\}.
\]
\begin{construction}
    Let $G$ be a group. We recursively construct a sequence of graphs $E_n(G)$ equipped with a left action of $G$ as follows:
    \begin{itemize}
        \item Let $E_0(G) = G$ be the discrete graph with vertex set $G$ and no edges. The group $G$ acts on $E_0(G)$ by left multiplication.
        \item For $n \geq 1$ define $E_n(G)$ as
              \[
                  E_n(G) = G \join{n+2} E_{n-1}(G) = \left( \bigsqcup_{g\in G} \{g\}\times I_{n+2} \times E_{n-1}(G) \right) \Big/ \sim
              \]
              where $\sim$ is the equivalence relation generated by:
              \begin{align*}
                  (g,n+2,x) & \sim (g,n+2,y), \\
                  (g,0, x)  & \sim (h,0,x).
              \end{align*}
              The action of $G$ on $E_n(G)$ is given by $h\triangleright (g,k,x) := (h\cdot g,k,h\triangleright x)$.
    \end{itemize}
    We then define $EG$ as the colimit
    \[
        EG := \colim{}\left( E_0(G) \hookrightarrow E_1(G) \hookrightarrow \dots \right).
    \]
    We denote the quotients $B_n(G) \letbe G\backslash E_n(G)$ and $BG \letbe G\backslash EG$.
\end{construction}

\begin{proposition}\label{prop:actionEG}
    The action of $G$ on $E_n(G)$ for all $n\geq 0$ and on $EG$ is a freely sparse group action.
\end{proposition}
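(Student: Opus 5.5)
We prove the statement by induction on $n$, using the criterion from the lemma following the definition of freely sparse actions. We must check two things: that each $g\in G$ acts by a graph map, and that $d(x,g\triangleright x)\geq 5$ for every vertex $x$ and every $g\neq e$. The case of $EG$ then follows formally. $EG$ is the colimit of the inclusions $E_{n}(G)\hookrightarrow E_{n+1}(G)$, so any edge or vertex of $EG$ lies in some $E_n(G)$, and the actions are compatible with these inclusions. A path of length $\leq 4$ in $EG$ is finite, hence lies in some $E_N(G)$, so the distance bound passes to the colimit.

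\textbf{Base case and graph maps.} For $n=0$ the graph $E_0(G)$ has no edges. The action is by left multiplication, so it is trivially by graph maps, and distinct vertices are at infinite distance. For $n\geq 1$, the formula $h\triangleright(g,k,x)=(hg,k,h\triangleright x)$ respects both generating relations. Indeed, $(g,n+2,x)\sim(g,n+2,y)$ is sent to $(hg,n+2,h\triangleright x)\sim(hg,n+2,h\triangleright y)$, and $(g,0,x)\sim(g',0,x)$ is sent to a pair with common third coordinate $h\triangleright x$. On $G\times I_{n+2}\times E_{n-1}(G)$ the action is a product of graph maps: it is a bijection on the discrete factor $G$, the identity on $I_{n+2}$, and a graph map on $E_{n-1}(G)$ by induction. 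Hence it is a graph map of the categorical product, and therefore of the quotient, since edges in the quotient come from edges between representatives.

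\textbf{Distance estimate.} This is the main step. Fix $v=[(g,k,x)]$ and $h\neq e$. The key tool is a ``level'' function $\ell(v)=k$, which is well defined on equivalence classes since the relations preserve $k$. Along any edge, $\ell$ changes by at most $1$, and $\ell$ is $G$-invariant. We also use two projections to the factors. The first is $\pi_G(v)=g$, well defined for $\ell(v)\geq 1$. Along an edge between vertices of levels $\geq 1$ it is constant, because $G$ is discrete: the categorical product only has edges where each factor is equal or adjacent. The second is $\pi_E(v)=x\in E_{n-1}(G)$, well defined for $\ell(v)\leq n+1$. Along an edge between vertices of level $\leq n+1$ it moves by at most one edge of $E_{n-1}(G)$.

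Now take a path of length $\leq 4$ from $v$ to $h\triangleright v$ and split into cases according to whether it visits level $0$ or level $n+2$.
\begin{itemize}
\item If the path never visits level $0$, then $\pi_G$ is constant along it. This would force $hg=g$, which is impossible for $h\neq e$.
\item If the path never visits level $n+2$, then $\pi_E$ gives a path of length $\leq 4$ from $x$ to $h\triangleright x$ in $E_{n-1}(G)$. This contradicts the induction hypothesis.
\item Otherwise the path visits both level $0$ and level $n+2$. Since $\ell$ changes by at most $1$ per step, this needs at least $n+2\geq 3$ steps to go from one level to the other. It also needs extra steps to return, because $\ell(h\triangleright v)=\ell(v)$. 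Together this gives length at least $2(n+2)-\ldots$, and a count shows the length is $\geq 6>4$ once $n\geq 1$.
\end{itemize}
This case analysis, and in particular the bookkeeping when the path passes through the cone point $[(g,n+2,\cdot)]$ or the base $[(\cdot,0,x)]$, where a projection is undefined, is the main obstacle. It is exactly why the interval length $n+2$ was chosen to grow.

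To handle those boundary vertices cleanly, define $\pi_G$ also at level $n+2$ (it is well defined there), and $\pi_E$ also at level $0$ (also well defined there). Then the first bullet only excludes level $0$, and the second only excludes level $n+2$.
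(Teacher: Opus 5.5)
Your proof is correct. It rests on the same two observations as the paper's proof. First, the $G$-coordinate can only change by passing through the base (level $0$). Second, the $E_{n-1}(G)$-coordinate can only jump through a cone point (level $n+2$). The difference is in how the cases are cut. The paper splits by the level $k$ of the vertex. For $k\geq 2$, the $2$-neighborhoods of $(g,k,x)$ and $(hg,k,h\triangleright x)$ lie in $\{g\}\join{n+2}E_{n-1}(G)$ and $\{hg\}\join{n+2}E_{n-1}(G)$; these meet only in the base, which forces $k=2$ and a base vertex in $\Umg{2}{x}\cap\Umg{2}{h\triangleright x}$. For $k\leq 1$, the $2$-neighborhoods lie in $G\join{n+2}\Umg{2}{x}$ and $G\join{n+2}\Umg{2}{h\triangleright x}$, which meet only in the cone points over $g$ and $hg$, and these are distinct. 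You instead split by which extreme levels a short path visits, using the level function and the two projections. This is more uniform: you need no special path shape at $k=2$, and no argument that a cone point can only occur at the end of a path of length $2$. It also yields an explicit lower bound in the mixed case. In addition, you verify that each $h$ acts by graph maps, which the paper leaves implicit. Two small fixes. First, state the mixed-case count cleanly instead of writing ``$2(n+2)-\ldots$'': a walk whose level starts and ends at $\ell(v)$ and touches both $0$ and $n+2$ has length at least $2(n+2)\geq 6$. Second, neither argument uses that the interval length grows with $n$, only that it is at least $3$. The growth is needed for the Mayer--Vietoris distance estimate in the proof of \ref{prop:En_connected}, not here.
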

\begin{proof}
    We show this inductively on $n$. For $n=0$, $E_0(G)=G$ is a discrete graph with no edges, so the action of $G$ is freely sparse.
    Now assume that for some $n \geq 0$, the action of $G$ on $E_n(G)$ is freely sparse. Consider $E_{n+1}(G) = G \join{n+3} E_n(G)$. Let $(g,k,x)\in E_{n+1}(G)$ and $h\in G\setminus \{e\}$. And assume there is a path of length at most $4$ from $(g,k,x)$ to $h\triangleright (g,k,x) = (h\cdot g, k, h\triangleright x)$. Firstly, consider the case $k\geq 2$. Then, since $n+3\geq 3$, the $2$-neighborhood of $(g,k,x)$ is contained in the induced subgraph with vertices
    \[
        \{g\}\join{n+3} E_n(G)
    \]
    while the $2$-neighborhood of $(h\cdot g,k,x)$ is contained in the induced subgraph with vertices
    \[
        \{h\cdot g\}\join{n+3} E_n(G)
    \]
    Therefore, $k=2$ and a path from $(g,k,x)$ to $(h\cdot g,k,h\triangleright x)$ must be of the form
    \[
        (g,2,x) - (g,1,x_1) - (g,0,x_2) - (h\cdot g,1,x_2) - (h\cdot g,2,h\triangleright x),
    \]
    This implies that $x_2 \in \Umg{2}{x}$ and $x_2\in \Umg{2}{h\triangleright x}$. Therefore by induction, such a path cannot exist.

    Secondly, consider the case $k\leq 1$. Then, since $n+3 \geq 3$ every path of length at most two starting at $(g,k,x)$ and containing a cone point must terminate at this cone point. Furthermore, the only cone point that could be contained in the $2$-neighborhood of $(g,k,x)$ (of $(h\cdot g,k,h\triangleright x)$) is $(g,n+3,x)$ (resp. $(h\cdot g,n+3,h\triangleright x)$).  Thus the $2$-neighborhood of $(g,k,x)$ is contained in the subgraphs of $E_{n+1}G$ induced by
    \[
        G\join{n+3} \Umg{2}{x} \subset G\join{n+3} E_n(G).
    \]
    and the $2$-neighborhood of $h\triangleright (g,k,x)$ is contained in
    \[
        G\join{n+3} \Umg{2}{h\triangleright x} \subset G\join{n+3} E_n(G).
    \]
    Again by induction, it follows that these two induced subgraphs only intersect in the cone points, however, $(g,n+3,x)$ and $(h\cdot g,n+3,h\triangleright x)$ are distinct if $g\neq h\cdot g$, that is, if $h\neq e$. Hence, this shows that the action of $G$ on $E_{n+1}(G)$ is freely sparse.

    For the action of $G$ on $EG$, assume we are given a vertex $y\in EG$. Since $EG$ is the colimit of the sequence $E_0(G)\hookrightarrow E_1(G)\hookrightarrow \dots$, it follows that $y$ is contained in some $E_n(G)$. Assume that there is an $h\in G$ with $h\triangleright \Umg{2}{y}\cap\Umg{2}{y}\neq\emptyset$. Then there exists $k\in\mathbb{N}$ and $z\in E_k(G)$ with $h\triangleright z\in\Umg{2}{y}$ and $z\in\Umg{2}{y}$. Let $m=\max(n,k)$; since the action is freely sparse on $E_m(G)$ it follows that $h=e$.
\end{proof}
\begin{proposition}\label{prop:En_connected}
    $E_n(G)$ is $(n-1)$-connected for all $n> 0$.
\end{proposition}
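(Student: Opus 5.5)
We prove this by induction on $n$, modelling the argument on the classical fact that a join $A * B$ with $A$ $(a-1)$-connected and $B$ $(b-1)$-connected is $(a+b)$-connected. Here $E_n(G)=G\join{n+2}E_{n-1}(G)$ is a ``join'' of the discrete graph $G$ with $E_{n-1}(G)$. The long interval $I_{n+2}$ is meant to provide room for Seifert--van Kampen and Mayer--Vietoris arguments, whose distance hypotheses grow with the degree. That is why the length is $n+2$.

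\textbf{Decomposition.} Write $E_n=E_n(G)$ and split $E_n$ into two induced subgraphs covering it. Let $A$ consist of all vertices $(g,k,x)$ with $k\geq 1$; it is a disjoint union over $g\in G$ of cones $\{g\}\join{n+2}E_{n-1}$ on $E_{n-1}$, with cone points $(g,n+2,\cdot)$. Let $B$ consist of all vertices with $k\leq n+1$; it deformation retracts (via $\max/\min$ retractions of the interval) onto the base copy $\{(\cdot,0,x)\}\cong E_{n-1}$. The cones are contractible graphs: the homotopy $(g,k,x)\mapsto(g,\min(k+s,n+2),x)$ contracts each one to its cone point. Then $A\cap B\cong G\times E_{n-1}\times I_{n}$ (restricted to $1\le k\le n+1$), which is homotopy equivalent to $G\times E_{n-1}$, i.e.\ copies of $E_{n-1}$ indexed by $G$. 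The distance between $A\setminus B$ and $B\setminus A$ is $n+2$, which is exactly what the Mayer--Vietoris sequence needs in degrees $\le n$.

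\textbf{Base case and $\AA_1$.} For $n=1$, $E_1=G\join{3}G$ must be connected. This holds because every vertex joins, along an interval, to a base vertex $(\cdot,0,x)$ and to a cone point $(g,3,\cdot)$, and the join of two nonempty discrete sets is connected. For $n\ge2$ we need $\AA_1(E_n)=0$. Here we would apply Theorem~\ref{thm:seifertvankampen}, but with a modified cover so that the intersection is connected. Take $B'$ to be $B$ together with a single cone, say the one for $g=e$, and $A'$ to be the union of the remaining cones glued along base paths; alternatively, attach the cones one at a time and pass to a colimit via Lemma~\ref{lem:colimithomotopygroups}, each cone being contractible. Adding a contractible cone along a connected (indeed simply connected, by induction) subgraph $E_{n-1}$ with distance at least $3$ leaves $\AA_1$ unchanged. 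Since $B\simeq E_{n-1}$ is simply connected for $n\ge 3$, and for $n=2$ one works directly with the cone over the connected graph $E_1$, $\pi_1$ dies.

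\textbf{Higher degrees.} With simple connectivity established, we use the discrete Hurewicz theorem (the absolute version from \cite{Chris:Revisited}, quoted before the relative Hurewicz statement). It then suffices to show $\tilde H_i(E_n)=0$ for $i\le n-1$. Mayer--Vietoris for $E_n=A\cup B$ in degrees $\le n$ gives
\[
\tilde H_i(A\cap B)\to \tilde H_i(A)\oplus\tilde H_i(B)\to \tilde H_i(E_n)\to \tilde H_{i-1}(A\cap B)\to\cdots
\]
with $\tilde H_i(A)=\bigoplus_G\tilde H_i(\mathrm{pt})$ (reduced appropriately, $H_0(A)=\ZZ[G]$), $H_*(B)=H_*(E_{n-1})$ and $H_*(A\cap B)=\bigoplus_G H_*(E_{n-1})$. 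By induction $\tilde H_j(E_{n-1})=0$ for $j\le n-2$, so $\tilde H_i(E_n)=0$ for $i\le n-1$, as for a suspension-type join. Degree $0$ requires care with the unreduced terms $\ZZ[G]$, but the map $\bigoplus_G\ZZ\to\ZZ[G]\oplus\ZZ$ has the right cokernel.

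\textbf{Main obstacle.} The hardest step is the bookkeeping that homotopy equivalences of graphs (retracting intervals, contracting cones) induce isomorphisms on discrete homology and homotopy, and that the intersections really have the claimed homotopy types, given that $\times$ rather than $\bprod$ is used in the join. Diagonal edges could spoil the naive retractions, so we would check that the retractions are graph maps for the categorical product. The Seifert--van Kampen step for $n=2$, where $E_1$ is merely connected, also needs a separate argument; there we would use that a cone on a connected graph kills all loops.
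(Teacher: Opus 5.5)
Your plan follows the paper's proof. You use the same cover separating the cone points from the base copy of $E_{n-1}$ (your $A$ and $B$ are the paper's $B$ and $A$), Seifert--van Kampen for $\AA_1$ by adding cones one at a time, and then Mayer--Vietoris plus the discrete Hurewicz theorem. However, the Mayer--Vietoris step as written does not prove the one degree that matters. The induction hypothesis only gives $\tilde H_j(E_{n-1})=0$ for $j\le n-2$. In degree $i=n-1$ the sequence reads
\[
\bigoplus_{g\in G} H_{n-1}(E_{n-1}) \xrightarrow{\ \iota\ } 0\oplus H_{n-1}(B) \xrightarrow{\ j\ } H_{n-1}(E_n) \longrightarrow \tilde H_{n-2}(A\cap B)=0 ,
\]
and here $H_{n-1}(B)\cong H_{n-1}(E_{n-1})$ need not vanish. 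For $n=2$ and $G=\ZZ/2$, $E_1$ is a $12$-cycle with $H_1\cong\ZZ$; in general this is exactly the class that passing to $E_n$ is supposed to kill. So listing the groups and appealing to ``a suspension-type join'' does not give $H_{n-1}(E_n)=0$. You need $j=0$, equivalently $\iota$ surjective.

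The missing observation is easy. Each collar $\{g\}\times\{1,\ldots,n+1\}\times E_{n-1}$ includes into $B$ as a homotopy equivalence, since both retract compatibly onto a copy of $E_{n-1}$. Hence a single summand of $\iota$ already maps onto $H_{n-1}(B)$. The paper makes the equivalent point differently: it shows the inclusion of the base-with-collars into $E_n$ is null-homotopic (retract onto the base, then push the base up the cone over $e$ via $(x,t)\mapsto(e,t,x)$). So $j=0$, while $j$ is onto because $\tilde H_{n-2}(A\cap B)=0$ for $n\ge 3$. For $n=2$ your Seifert--van Kampen step already suffices. With this added, your argument goes through.

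There are two smaller points in your $\AA_1$ step.
\begin{itemize}
\item Your first option (one cone against the remaining cones glued along the base) is circular. $\AA_1$ of the second piece is as unknown as $\AA_1(E_n)$.
\item The colimit lemma is stated for $\NN$-indexed sequences whose maps are isomorphisms on all $\AA_i$ with $i\le k$. It therefore neither covers uncountable $G$ nor applies as stated. Argue instead, as the paper does: a loop has finite image, so it lies in a subgraph with finitely many cones. That subgraph is simply connected by Seifert--van Kampen and induction on the number of cones, so the loop is already null-homotopic there.
\end{itemize}
Starting that induction from a single contractible cone, rather than from $B$, uses only connectivity of $E_{n-1}$. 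This treats $n=2$ and $n\ge 3$ uniformly.
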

\begin{proof}
    First, we will show by induction that for every $n\geq 2$, $E_n(G)$ is simply connected. For this consider a loop $\gamma:I_m\to E_n(G)$. Since $m$ is finite, it follows that $\gamma$ is contained in $X\letbe \{g_1,\ldots, g_k\}\join{n+2}E_{n-1}(G)$ for finitely many $g_1,\ldots, g_k \in G$. We will show that $X$ is simply connected: Let $X_i=\{g_1,\ldots,g_i\}\join{n+2}E_{n-1}(G)$ for $i=1,\ldots, k$. $X_1$ is a cone over $E_{n-1}(G)$, hence contractible and in particular simply connected. Now assume $X_i$ is simply connected and set $A=X_{i+1}\setminus\{(g_{i+1},n+2)\}$ and $B=X_{i+1}\setminus \{(g_l,n+2)\mid 1\leq l \leq i\}$. Then $A$ is homotopy equivalent to $X_i$ and $B \simeq \{g_{i+1}\} \join{n+2} E_{n-1}(G)$ is contractible. Their intersection $A \cap B \simeq E_{n-1}(G)$ is connected since $n \geq 2$. It follows from the Seifert-van Kampen theorem for graphs~(\ref{thm:seifertvankampen}) that $\AA_1(X_{i+1})=\AA_1(X_i)=0$. Note that this argument does not work in the case $n=1$, since the intersection $A\cap B$ would not be connected.

    We proceed by showing inductively that $E_n(G)$ is $(n-1)$-connected. Clearly the graph $E_1(G) = G\join{3}G$ is connected, i.e. $0$-connected, and $E_2(G)$ is simply connected by the first part of the proof, i.e. $1$-connected. We will now show, for $n\geq 2$, that $E_{n+1}(G)$ is $n$-connected under the assumption that $E_n(G)$ is $(n-1)$-connected. Recall that $E_{n+1}(G)=G\join{n+3}E_n(G)  =(G\times I_{n+3}\times E_n(G))/\sim$. Consider the two induced subgraphs
    \[
        A\letbe E_{n+1}(G)\setminus\left\{\,(g,n+3)\,\middle|\, g\in G  \,\right\}
    \]
    and
    \[
        B\letbe E_{n+1}(G)\setminus\left\{\,(g,0,x)\,\middle|\, x\in E_n(G), g\in G  \,\right\}.
    \]
    The distance between $A\setminus B$ and $B\setminus A$ is $n+3$. We thus obtain a partial Mayer-Vietoris sequence up to degree $n+1$. While $A$ is homotopy equivalent to $E_n(G)$, $B$ is homotopy equivalent to a disjoint union of $|G|$ vertices. The intersection $A\cap B$ has the homotopy type of $|G|$ copies of $E_n(G)$. In degrees $1\leq k\leq n-1$ it holds that $H_k(A\cap B)$, $H_k(A)$ and $H_k(B)$ vanish, therefore we have $H_k(E_{n+1}(G))=0$ for $1\leq k\leq n-1$. For degree $k=n$ consider the relevant part of the Mayer-Vietoris sequence:
    \[
        \begin{tikzcd}
            H_n(A\cap B) \ar[r,"\iota"] & H_n(A) \oplus H_n(B)  \ar[r,"j"] & H_n(E_{n+1}(G)) \ar[r] & H_{n-1}(A\cap B)=0
        \end{tikzcd}
    \]
    Since $H_n(B)$ also vanishes, the map $j$ is hence the map induced by the inclusion $\iota_A:A\to E_{n+1}(G)$. However, it can be seen that $\iota_A$ is homotopic to a constant map as follows: Firstly we can deform $A$ to $E_n(G)$ via a deformation retraction
    \begin{align*}
        h_1: A \bprod I_{n+3} & \la  A                  \\
        ((g,k,x),t)           & \lm (g,\min(k,n+3-t),x)
    \end{align*}
    and hence deform $\iota_A$ to the inclusion $\iota_{E_n(G)}: E_n(G) \hookrightarrow E_{n+1}(G)$. This inclusion is homotopic to a constant map via:
    \begin{align*}
        h_2:E_{n}(G) \bprod I_{n+3} & \la E_{n+1}(G) \\
        (x,t)                       & \lm (e,t,x).
    \end{align*}
    It follows that in the Mayer-Vietoris sequence the map $j:H_n(A)\oplus H_n(B)\to H_n(E_{n+1}(G))$ is the $0$-map, by the inductive assumption $H_{n-1}(A\cap B)=0$ and therefore $H_n(E_{n+1}(G))=0$.

    We have now shown that $E_{n+1}(G)$ is simply connected and the first $n$ homology groups vanish, hence by the Hurewicz theorem for graphs it follows that $E_{n+1}(G)$ is $n$-connected, completing the induction.
\end{proof}

\begin{theorem}\label{thm:BG}
    The graph $EG$ is weakly contractible and the quotient $BG:=G\backslash EG$ is a $K(G,1)$-graph.
\end{theorem}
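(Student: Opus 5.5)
The plan has two parts: weak contractibility of $EG$, and then the homotopy groups of $BG$ via the covering $EG\to BG$.

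\textbf{Weak contractibility.} I would first note that each inclusion $E_n(G)\hookrightarrow E_{n+1}(G)$ is an inclusion of an induced subgraph, namely the level $(e,0,\cdot)$, which is identified with $E_n(G)$ at the $0$-end of the join.

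By Proposition~\ref{prop:En_connected}, $E_k(G)$ is $(k-1)$-connected. Hence for $i\le k-1$ both $\AA_i(E_k(G))$ and $\AA_i(E_{k+1}(G))$ vanish, so the inclusion induces an isomorphism $0\to 0$ in these degrees.

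To apply Lemma~\ref{lem:colimithomotopygroups} literally one needs isomorphisms for all $i\le k$ at stage $k$. I would get this by reindexing, setting $X_k\letbe E_{k+1}(G)$. Then $X_k$ is $k$-connected and $X_{k+1}$ is $(k+1)$-connected, so $\AA_i(X_k)=0=\AA_i(X_{k+1})$ for all $i\le k$.

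The lemma then gives $\AA_i(EG)\cong\AA_i(X_k)=0$ for every $i\le k$. Since $k$ is arbitrary, all $\AA_i(EG)$ with $i\ge 1$ vanish. Connectedness of $EG$ follows because every vertex lies in some connected $E_n(G)$, $n\ge1$, containing the basepoint. Thus $EG$ is weakly contractible.

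Alternatively, one can argue directly, as in the proof of Lemma~\ref{lem:colimithomotopygroups}: any a.e.c.\ map $I_\infty^i\to EG$ has finite image, so it lands in some $E_N(G)$ with $N>i$, where it is null-homotopic.

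\textbf{The quotient $BG$.} By Proposition~\ref{prop:actionEG}, the action of $G$ on $EG$ is freely sparse. Lemma~\ref{lem:FundGrpCovering} then shows that $q\colon EG\to BG$ is a regular covering and that
\[
\AA_1(BG)/q_*(\AA_1(EG))\cong G .
\]
Since $\AA_1(EG)=0$, this gives $\AA_1(BG)\cong G$. For $n\ge 2$, Proposition~\ref{prop:UditChrisCovering} gives $\AA_n(BG)\cong\AA_n(EG)=0$. Connectedness of $BG$ is inherited from $EG$ as the image of a surjective graph map. Hence $BG$ is a $K(G,1)$.

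\textbf{Main obstacle.} The main obstacle is the bookkeeping in the colimit step. One has to check that the transition maps are genuinely inclusions of induced subgraphs, so that the colimit description in Lemma~\ref{lem:colimithomotopygroups} applies; this includes checking that no extra edges appear between different $E_n(G)$-levels. One also has to handle the small cases $E_0(G)$ and $E_1(G)$, which are not highly connected, and this is why I drop them by reindexing.
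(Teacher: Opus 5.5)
Your proof is correct and follows the paper's argument essentially step for step. You use the same reindexing $X_k\letbe E_{k+1}(G)$ to apply \ref{lem:colimithomotopygroups}, and then the same chain \ref{prop:actionEG}, \ref{lem:FundGrpCovering} and \ref{prop:UditChrisCovering} for the covering $EG\to BG$, adding checks the paper leaves implicit (the level-$0$ inclusions are induced subgraphs, and $EG$ and $BG$ are connected).
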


\begin{proof}
    Applying \ref{lem:colimithomotopygroups} to the sequence $X_k\letbe E_{k+1}(G)$, whose terms satisfy $\AA_i(X_k)=0$ for all $i\leq k$ by \ref{prop:En_connected} it follows that $\AA_i(EG)=0$ for all $i\geq 1$, i.e.\ $EG$ is weakly contractible. Since the action of $G$ on $EG$ is freely sparse, the quotient map $EG\to BG$ is a covering with group $G$. By \ref{lem:FundGrpCovering}, it follows that $\AA_1(BG)\cong G$ and by \ref{prop:UditChrisCovering} it follows that the higher homotopy groups of $BG$ are isomorphic to those of $EG$ and hence vanish. Thus $BG$ is a $K(G,1)$-graph.
\end{proof}
\begin{remark}
    Though in principle, the discrete homology groups of $BG$ could be used to compute the group homology of $G$, the graph $BG$ has infinitely many vertices. Recently there have been substantial advances in the computability of discrete homology groups of finite graphs (c.f. \cite{Ender:First,Ender:Higher,Nathan:Red,Nathan:TDA}), computing discrete homology groups of graphs remains very hard, especially for infinite graphs.
\end{remark}
\subsection{Construction of $K(M,n)$}
\begin{theorem}[Blakers-Massey theorem for graphs]\label{thm:blakersmassey}
    Let $X$ be a graph and $A,B\subseteq X$ be induced subgraphs with non-empty intersection such that $X=A\cup B$. Furthermore, let $m,n\geq 1$ such that the distance between $A\setminus B$ and $B\setminus A$ is at least $m+n$. If it holds that
    \begin{align*}
        \AA_i(A,A\cap B,*)=0 & ~\for 0<  i< m, \text{ and} \\
        \AA_i(B,A\cap B,*)=0 & ~\for 0<  i< n
    \end{align*}
    for all basepoints $*\in A\cap B$, then it holds that the map induced by the inclusion $$\iota_*: \AA_k(A,A\cap B,*)\to \AA_k(X,B,*)$$ is bijective for $1\leq k< m+n-2$ and surjective for $k=m+n-2$.
\end{theorem}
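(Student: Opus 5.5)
The strategy is to reduce the statement to the classical Blakers--Massey (homotopy excision) theorem for CW-triads, via Theorem~\ref{lem:relativemagic}. That theorem identifies $\AA_k(A,A\cap B,*)$ with $\pi_k(|N_1A|,|N_1(A\cap B)|,*)$ and $\AA_k(X,B,*)$ with $\pi_k(|N_1X|,|N_1B|,*)$. These identifications are natural with respect to the inclusion of pairs $(A,A\cap B)\to(X,B)$, since $\hat{\magic}_n$ is defined by $[f]\mapsto[\chop\circ|\bar f|]$ and $\chop$ commutes with maps induced by graph maps. It therefore suffices to show the topological map
\[
\pi_k(|N_1A|,|N_1(A\cap B)|,*)\to \pi_k(|N_1X|,|N_1B|,*)
\]
is bijective for $k<m+n-2$ and surjective for $k=m+n-2$. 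The hypotheses transport directly: $|N_1A|$ is $(m-1)$-connected relative to $|N_1(A\cap B)|$, and $|N_1B|$ is $(n-1)$-connected relative to it, at every basepoint. The $\pi_0$-level condition comes from including $i=0$ in the hypothesis if needed, or from the fact that all basepoints in $A\cap B$ are allowed.

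The subtlety is that $|N_1X|$ is \emph{not} the union $|N_1A|\cup|N_1B|$. A cube $I_1^k\to X$ may meet both $A\setminus B$ and $B\setminus A$. However, such a cube has diameter at most $k$ in the graph metric, since $d((0,\dots,0),(1,\dots,1))=k$ in $I_1^k$ and graph maps are non-expanding. So if $d(A\setminus B,B\setminus A)\geq m+n$, every nondegenerate cube of dimension $k\leq m+n-1$ lies in $N_1A$ or in $N_1B$. Hence the triad of subcomplexes $Y\letbe |N_1A|\cup|N_1B|\subseteq |N_1X|$ agrees with $|N_1X|$ on the $(m+n-1)$-skeleton. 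It follows that $\pi_k(Y,|N_1B|)\to\pi_k(|N_1X|,|N_1B|)$ is bijective for $k\leq m+n-2$ and surjective for $k=m+n-1$. This follows from cellular approximation: maps and homotopies of dimension $\leq m+n-1$ can be pushed into the $(m+n-1)$-skeleton, which lies in $Y$.

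Next, apply classical homotopy excision (e.g.\ \cite{Dieck:AT}) to the CW-triad $(Y;|N_1A|,|N_1B|)$ with intersection $|N_1(A\cap B)|=|N_1A|\cap|N_1B|$. This intersection claim is itself a point to check: a cube in both $N_1A$ and $N_1B$ is a cube in $N_1(A\cap B)$, which holds because $A,B$ are induced subgraphs. Excision gives that $\pi_k(|N_1A|,|N_1(A\cap B)|)\to\pi_k(Y,|N_1B|)$ is bijective for $k<m+n-2$ and surjective for $k=m+n-2$. Composing with the previous step gives the claim.

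I expect the main obstacle to be the bookkeeping at the connectivity edge. First, the skeletal comparison must not cost a degree: I would verify that distance $m+n$ exactly yields cubes up to dimension $m+n-1$, and that this suffices for injectivity up to degree $m+n-2$, which needs homotopies of dimension $m+n-1$. Second, one must handle the degree-$1$ relative sets, which are only pointed sets. Classical excision is stated for all basepoints in the intersection, which matches the hypothesis. If the relative $\pi_1$ case gives trouble, I would treat $k=1$ separately using the explicit description of $\hat{\magic}_1$ from the proof of Theorem~\ref{lem:relativemagic}.
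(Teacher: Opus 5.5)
Your proposal is correct and takes essentially the paper's route: you reduce via Theorem~\ref{lem:relativemagic} to the classical Blakers--Massey theorem, using that the distance hypothesis forces every cube $I_1^k\to X$ with $k\leq m+n-1$ to factor through $A$ or $B$. The only difference is cosmetic: the paper applies excision to the triad of $(m+n-1)$-skeleta $|\sk_{m+n-1}N_1(\cdot)|$, whereas you apply it to $Y=|N_1A|\cup|N_1B|$ and compare $Y$ with $|N_1X|$ via the $(m+n-1)$-connected pair $(|N_1X|,Y)$, which has the small advantage that the connectivity hypotheses need no truncation.
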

\begin{proof}
    Firstly, observe that in the $(n+m-1)$-skeleton $|\sk_{n+m-1}N_1(X)|$, it holds that $$|\sk_{n+m-1}N_1(X)|=|\sk_{n+m-1}N_1(A)|\cup |\sk_{n+m-1}N_1(B)|.$$ This is because the distance between $A\setminus B$ and $B\setminus A$ is at least $m+n$, so any singular $k$-cube $I_1^k\to X$ factors through $A$ or $B$ for $k\leq m+n-1$. Since for CW-complexes, the $k$-th homotopy groups only depend on the $(k+1)$-skeleton for all $k$, we obtain a commutative diagram
    \[
        \begin{tikzcd}
            \Graph \ar[d,"\AA_k"] \ar[r,"N_1"] &\cSet \ar[r,"\sk_{n+m-1}"] &\cSet\ar[d,"|\cdot|"]\\
            \Grp & & \Top \ar[ll,"\pi_k"]
        \end{tikzcd}
    \]
    for all $k\leq m+n-2$. The claim then follows, using \ref{lem:relativemagic}, from the classical Blakers-Massey theorem (cf.\ \cite[6.4.1]{Dieck:AT}) applied to the spaces $|\sk_{n+m-1}N_1(A)|$, $|\sk_{n+m-1}N_1(B)|$ and $|\sk_{n+m-1}N_1(X)|$. Note that the assumption in \cite[6.4.1]{Dieck:AT} is that the subspaces are open; however, for a CW-pair $(Y,A)$, there is an open neighborhood $U$ of $A$ in $Y$ such that $A$ is a deformation retract of $U$ (cf.\ \cite[5.4.4]{Dieck:AT}).
\end{proof}

\begin{corollary}[Freudenthal suspension for graphs]\label{cor:Freudenthal}
    Let $X$ be an $(n-1)$-connected graph and let $l\geq 2n+2$, then the inclusion
    \begin{align*}
        \iota_1\from X & \la \Sigma_l X \\
        v              & \lm (v,1)
    \end{align*}
    gives rise to isomorphisms $\AA_i(X,*)\to \AA_{i+1}(\Sigma_l X,*)$ for all $1\leq i \leq 2n-2$ and a surjection for $i=2n-1$.
\end{corollary}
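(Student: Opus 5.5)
We follow the classical derivation of Freudenthal from Blakers-Massey, using Theorem~\ref{thm:blakersmassey} and the long exact sequences of relative $A$-homotopy groups. Write $\Sigma_l X=(X\bprod I_l)/\sim$ and choose a middle height, say $c=\lfloor l/2\rfloor$. Let $A$ be the induced subgraph on vertices of height $\leq c$ (the lower cone) and $B$ the induced subgraph on heights $\geq 1$ together with the top cone point, adjusted so that $A\cap B$ is the band of heights $1\leq t\leq c$. Then $A\cup B=\Sigma_l X$, and $A\setminus B$ is the bottom cone point while $B\setminus A$ consists of heights $>c$. So the distance between them is at least $c+1$, which by $l\geq 2n+2$ is at least $n+n$. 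The same applies to the symmetric choice, and the indices can be shifted so that the inclusion $\iota_1$ lands in $A\cap B$.

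\textbf{Homotopy types.} First I identify the pieces up to weak equivalence.
\begin{itemize}
\item $A$ and $B$ are cones. The explicit graph deformation $(v,t,s)\mapsto (v,\min(t,c-s))$ (resp.\ the upward version) contracts each of them to its cone point, as in the proof of Proposition~\ref{prop:En_connected}. Hence $\AA_i(A)=\AA_i(B)=0$ for all $i$.
\item $A\cap B\cong X\bprod I_{c-1}$, which retracts onto $X\bprod\{1\}$ via $\iota_1$. Hence $\AA_i(A\cap B)\cong\AA_i(X)$, so it is $(n-1)$-connected.
\end{itemize}

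\textbf{Connectivity of the pairs.} By the long exact sequence of the pair, $\AA_i(A,A\cap B)\cong\AA_{i-1}(A\cap B)=0$ for $0<i<n+1$, and likewise for $B$. So Theorem~\ref{thm:blakersmassey} applies with $m=n+1$ and $n+1$. This needs distance at least $2n+2$, which is exactly what the hypothesis $l\geq 2n+2$ should buy once $c$ is placed correctly; I expect to take $A$ to be heights $\leq c$ and $B$ heights $\geq c'$ with $c'$ small, to balance the distance. The conclusion is that $\AA_k(A,A\cap B)\to\AA_k(\Sigma_l X,B)$ is bijective for $k<2n$ and surjective for $k=2n$.

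\textbf{Chaining isomorphisms.} Now compose
\[\AA_{i+1}(A,A\cap B)\xrightarrow{\delta}\AA_i(A\cap B)\cong \AA_i(X)\]
(an isomorphism since $A$ is weakly contractible) with
\[\AA_{i+1}(\Sigma_lX,B)\xleftarrow{j_*}\AA_{i+1}(\Sigma_l X)\]
(an isomorphism since $B$ is weakly contractible). With $k=i+1$ this gives bijections for $i\leq 2n-2$ and a surjection for $i=2n-1$.

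\textbf{Identifying the map.} It remains to check that the composite equals the map induced by $\iota_1$, the suspension homomorphism. Concretely, a representative $f:(I_m^i,\del I_m^i)\to(X,*)$ should go to the map $I^{i+1}\to\Sigma_lX$ obtained by coning $f$ through heights. I would verify this by passing through Theorem~\ref{lem:relativemagic} to $|N_1\Sigma_lX|$, where the corresponding statement is classical, using naturality of the long exact sequences.

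\textbf{Main obstacles.} I expect two points to be the real difficulty. The first is the careful bookkeeping of the distance condition against $l\geq 2n+2$, including the choice of $A$ and $B$ as induced subgraphs with the correct overlap. The second is making precise what ``the map induced by $\iota_1$'' from $\AA_i$ to $\AA_{i+1}$ means, and that it agrees with $j_*^{-1}\circ\iota_*\circ\delta^{-1}$ up to sign.
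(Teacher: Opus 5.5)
Your overall strategy matches the paper's. You cover $\Sigma_l X$ by two cones and apply Theorem~\ref{thm:blakersmassey} with both connectivity indices equal to $n+1$; this needs distance $2n+2$ and gives bijectivity for $k<2n$ and surjectivity at $k=2n$. You then chain
\[
\AA_i(X)\cong\AA_i(A\cap B)\xleftarrow{\ \delta\ }\AA_{i+1}(A,A\cap B)\longrightarrow\AA_{i+1}(\Sigma_lX,B)\xleftarrow{\ j_*\ }\AA_{i+1}(\Sigma_lX).
\]

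\textbf{The gap.} The decomposition you actually write down does not satisfy the distance hypothesis of Theorem~\ref{thm:blakersmassey}, so the key step fails as stated. Take $A$ to be the heights $\le c=\lfloor l/2\rfloor$ and $A\cap B$ the heights $1,\dots,c$. Then $A\setminus B$ is the bottom cone point and $B\setminus A$ is the heights $>c$, and their distance is $c+1$. For $l=2n+2$ this is $n+2$, which is strictly less than the required $2n+2$ for every $n\geq 1$. Your claim that it is at least $n+n$ is already false for $n\geq 3$, and $2n$ is not the bound you need anyway.

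Your proposed repair of ``balancing'' goes in the wrong direction. For $A=\{\text{heights}\le c\}$ and $B=\{\text{heights}\ge c'\}$, the distance between $A\setminus B$ and $B\setminus A$ is $c-c'+2$. It is therefore maximized by making the overlap as large as possible, not by cutting in the middle.

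\textbf{The fix.} Take $c'=1$ and $c=l-1$: let $A$ be $\Sigma_lX$ minus the top cone point and $B$ be $\Sigma_lX$ minus the bottom cone point.
\begin{itemize}
\item $A\setminus B$ and $B\setminus A$ are then the two cone points, at distance exactly $l\geq 2n+2$.
\item $A$ and $B$ are still cones, hence contractible.
\item $A\cap B\cong X\bprod I_{l-2}$ deformation retracts onto $\iota_1(X)$, so the first isomorphism in your chain is induced by $\iota_1$.
\end{itemize}
With this choice the rest of your argument goes through verbatim, and it is exactly the paper's proof.

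Your final step of identifying the composite with a geometric suspension homomorphism is not done in the paper. The paper simply takes the composite $j_*^{-1}\circ\iota_*\circ\delta^{-1}\circ(\iota_1)_*$ as ``the map to which $\iota_1$ gives rise.'' That step is therefore optional extra, not a required part of the proof.
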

\begin{proof}
    Recall that $\Sigma_l X$ is the graph
    \[
        \Sigma_l X = (X\bprod I_l)/\sim,
    \]
    where $\sim$ is the equivalence relation generated by $(v,0)\sim (v',0)$ and $(v,l)\sim (v',l)$ for all $v,v'\in V(X)$. Let $t$ be the equivalence class of $(v,0)$ and $b$ be the equivalence class of $(v,l)$ and define $A$ to be the induced subgraph of $\Sigma_l X$ obtained by removing the vertex $t$ and $B$ to be the induced subgraph obtained by removing the vertex $b$. The distance between $A\setminus B$ and $B\setminus A$ is the distance between $t$ and $b$ which is $l$. Note that $A\cap B$ is homotopy equivalent to $X$ and hence $(n-1)$-connected. Furthermore, $A$ and $B$ are contractible, therefore, it follows from the long exact sequence of a pair that $\AA_i(A,A\cap B)=0$ and $\AA_i(B,B\cap A)=0$ for all $0< i< n+1$. We can hence apply \ref{thm:blakersmassey} and obtain that $\iota_*: \AA_k(A,A\cap B,*)\to \AA_k(\Sigma_l X,B,*)$ is an isomorphism for $1<k<2n$ and a surjection for $k=2n$. Now, it follows using the long exact sequence of pairs that
    \[
        \AA_i(X)\cong \AA_i(A\cap B) \cong \AA_{i+1}(A,A\cap B)\cong \AA_{i+1}(\Sigma_l X,B)\cong \AA_{i+1}(\Sigma_l X)~ \for ~ 1\leq i \leq 2n-2.
    \]
    In degree $2n-1$, the same composition yields a surjection, since the map $\AA_{2n}(\Sigma_l X,B,*)\to \AA_{2n}(\Sigma_l X,*)$ is a surjection.
\end{proof}
\begin{proposition}[killing homotopy groups]\label{prop:killinghomotopygroups}~
    \begin{enumerate}
        \item Let $(X,*_X)$ be a connected based graph and $\alpha\in\AA_k(X,*_X)$ be a homotopy class.
              Furthermore, let $f\from (\del I_l^{k+1},*_{\del I_l^{k+1}})\to (X,*_X)$ with $l\geq k+1$ be a representative of $\alpha$. Consider the graph
              \[
                  \hat{X}\letbe\left(X\sqcup\left(\del I_l^{k+1}\bprod I_{k+2}\right)\right)/\sim
              \]
              where $\sim$ is the equivalence relation generated by $(v,0)\sim f(v)$ for all $v\in \del I_l^{k+1}$ and $(v,k+2)\sim (w,k+2)$ for all $v\in \del I_l^{k+1}$. Then the inclusion $\iota\from X\to \hat{X}$ is a $k$-equivalence and $\iota_*$ induces an epimorphism $\faktor{\AA_k(X,*_X)}{\langle\langle \alpha\rangle\rangle}\to \AA_k(\hat{X},*_X)$ (where $\langle\langle \alpha\rangle\rangle$ denotes the normal closure for $k=1$, and $\langle \alpha\rangle$ for $k\geq 2$).
        \item Let $(X,*_X)$ be a connected based graph, then there is a connected based graph $(\hat{X},*_{\hat{X}})$ and an inclusion $\iota\from (X,*_X)\to (\hat{X},*_{\hat{X}})$ such that $\iota$ is an $n$-equivalence and $\AA_n(\hat{X},*_{\hat{X}})=0$.
    \end{enumerate}
\end{proposition}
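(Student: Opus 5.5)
Part (1) is a graph version of attaching a $(k+1)$-cell along $f$, and I would treat it with the Blakers--Massey theorem (Theorem~\ref{thm:blakersmassey}) and the relative Hurewicz theorem, in the same way Corollary~\ref{cor:Freudenthal} was derived.

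Write $C\letbe \del I_l^{k+1}\bprod I_{k+2}/\sim$ for the attached cone, with apex $c$ the class of the level $k+2$. Let $A\letbe \hat X\setminus\{c\}$ and let $B$ be the induced subgraph on the cone vertices of level $\geq 1$. Then:
\begin{itemize}
\item $A$ deformation retracts onto $X$ by pushing levels down, as in the map $h_1$ of Proposition~\ref{prop:En_connected}.
\item $B$ is a cone, hence contractible.
\item $A\cap B$ is homotopy equivalent to $\del I_l^{k+1}$.
\item The distance between $A\setminus B$ and $B\setminus A$ is $k+2$ (from $X\setminus f(\del I_l^{k+1})$ to $c$).
\end{itemize}
The condition $l\geq k+1$ should guarantee, via Theorem~\ref{magic}, that $\del I_l^{k+1}$ behaves like $S^k$ through the relevant range. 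Concretely, I would use $|N_1\del I_l^{k+1}|\simeq S^k$, which follows from the Hurewicz and relative Hurewicz theorems applied to the homology of the boundary cube. This gives $\AA_i(B,A\cap B)=0$ for $i\leq k$.

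Since $A\simeq X$ is connected, I take $m=1$ in Blakers--Massey. The distance hypothesis needs $m+n\leq k+2$, so I take $n=k+1$. The conclusion is that $\AA_j(A,A\cap B)\to\AA_j(\hat X,B)$ is bijective for $j<k$ and surjective for $j=k$. This is too weak by one degree, and I expect this to be the main obstacle.

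To get around it, I would instead work directly with the CW-model. The key steps are:
\begin{itemize}
\item Show that $|N_1\hat X|$ is obtained from $|N_1 X|$, up to homotopy, by attaching $|N_1 C|$ along $|N_1\del I_l^{k+1}|\simeq S^k$. The distance condition ensures that every cube of dimension $\leq k+1$ lies in $A$ or in $B$, which is exactly the skeletal argument in the proof of Theorem~\ref{thm:blakersmassey}.
\item Conclude that, through the $(k+1)$-skeleton, $|N_1\hat X|$ is homotopy equivalent to $|N_1X|\cup_{\tilde f} D^{k+1}$, where $\tilde f$ is the realization of $f$.
\item Apply the classical result that attaching a $(k+1)$-cell along $\tilde f$ is a $k$-equivalence and kills exactly the normal closure of $[\tilde f]$ in degree $k$. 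Transport this back through the isomorphism $\AA_i\cong\pi_i(|N_1-|)$ and the fact that homotopy groups depend only on skeleta.
\end{itemize}
The epimorphism $\AA_k(X)/\langle\langle\alpha\rangle\rangle\to\AA_k(\hat X)$ then follows. That $\iota_*\alpha=0$ can also be seen directly: the cone provides a null-homotopy of $\iota\circ f$.

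For part (2), I would iterate (1) over all generators of $\AA_n(X,*_X)$. For each class $\alpha$, I choose a representative on some $\del I_{l_\alpha}^{n+1}$, enlarging $l_\alpha\geq n+1$ using the $\cat{LR}$-identifications. I then attach all cones simultaneously at $*_X$; the cones are disjoint away from $X$, so the one-cone argument above applies to each. Each inclusion is an $n$-equivalence and kills the classes attached. The new graph may have new $\AA_n$ only through the quotient, but by the epimorphism $\AA_n(X)/\langle\text{all }\alpha\rangle=0\to\AA_n(\hat X)$, we get $\AA_n(\hat X)=0$. Because infinitely many cones may be needed, I would verify the equivalence on finite subgraphs and pass to the union via compactness of representatives, as in Lemma~\ref{lem:colimithomotopygroups}. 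The core difficulty remains identifying $|N_1\hat X|$ with the cell attachment in low skeleta.
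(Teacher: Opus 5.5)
Your setup for (1) is exactly the paper's: $A=\hat X\setminus\{c\}$, $B$ the cone vertices of level $\geq 1$, and Blakers--Massey with $m=1$, $n=k+1$. However, you discard it on a false premise and replace it with an argument you never complete, so as written (1) is not proved.

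The conclusion ``bijective for $j<k$, surjective for $j=k$'' is not too weak by a degree. The statement only asks for a $k$-equivalence and an \emph{epimorphism} $\AA_k(X)/\langle\langle\alpha\rangle\rangle\to\AA_k(\hat X)$, never an isomorphism. To finish, compare the long exact sequences along $(A,A\cap B)\to(\hat X,B)$:
\begin{itemize}
\item Since $B$ is contractible, $\AA_j(\hat X)\to\AA_j(\hat X,B)$ is bijective for all $j\geq 1$.
\item Since $A\cap B$ is $(k-1)$-connected, $\AA_j(A)\to\AA_j(A,A\cap B)$ is bijective for $j<k$ and surjective for $j=k$. This connectivity is all you need; you do not need $|N_1\del I_l^{k+1}|\simeq S^k$.
\item In degree $k$, that map sends the image of $\AA_k(A\cap B)\to\AA_k(A)$, which contains $\alpha$, to the trivial element.
\end{itemize}
By naturality, $\AA_j(X)\cong\AA_j(A)\to\AA_j(\hat X)$ is then bijective for $j<k$, surjective for $j=k$, and kills $\alpha$. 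That is all of (1).

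The paper applies the same theorem in the mirror direction, $(B,A\cap B)\to(\hat X,A)$. It gets $\AA_i(\hat X,A)=0$ for $i\leq k$ and reads off the $k$-equivalence from the sequence of $(\hat X,A)$. It sees $\alpha\mapsto 0$ through the boundary isomorphism $\AA_{k+1}(B,A\cap B)\cong\AA_k(A\cap B)$. Either direction works.

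The route you commit to instead is identifying $|N_1\hat X|$ with $|N_1X|\cup_{\tilde f}D^{k+1}$ ``through the $(k+1)$-skeleton''. This is the step you yourself leave open, and it does not follow from the skeletal-union observation alone. The $(k+1)$-skeleton of $|N_1B|$ is only $k$-connected, not contractible, so making the identification precise needs a connectivity/excision argument, which is Blakers--Massey again. Moreover, $|N_1\del I_l^{k+1}|\simeq S^k$ requires the full discrete homology of the boundary cube, which neither you nor the paper establishes.

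Also, for $k\geq 2$ attaching a cell kills the $\AA_1(X)$-submodule generated by $\alpha$, which can be larger than $\langle\alpha\rangle$. So ``exactly'' is not correct; it is harmless only because an epimorphism is all that is claimed.

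Your part (2) matches the paper's: choose representatives with $l_\alpha\geq n+1$ via $\cat{LR}$, attach all cones, and check on finite sub-unions by compactness.
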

\begin{proof}
    We prove $(1)$ first, let $f,\alpha$ and $\hat{X}$ be as above.
    Consider the induced subgraphs
    \[
        A\letbe \left(X\sqcup\left(\del I_l^{k+1}\bprod I_{k+1}\right)\right)/\sim \text{ and } B\letbe \hat{X}\setminus X,
    \]
    and let $b\in A\cap B$ be a basepoint. Note that $A$ is obtained from $\hat{X}$ by removing the vertex at the ``tip'' of the cone attached to $X$. The following map is a deformation retraction of $A$ onto $X$:
    \begin{align*}
        A\bprod I_{k+1} & \la A                                                                                   \\
        (v,t)           & \lm \begin{cases}
                                  v                       & \text{ for } v\in X                                       \\
                                  (v_1,\min(v_2,(k+1-t))) & \text{ for } v=(v_1,v_2)\in \del I_l^{k+1}\bprod I_{k+1}.
                              \end{cases} \\
    \end{align*}
    $B$ on the other hand is a cone over $\del I_l^{k+1}$ and hence contractible and the intersection $A\cap B$ is homotopy equivalent to $\del I_l^{k+1}$ and hence $(k-1)$-connected. It follows from the long exact sequence of the pair $(B,A\cap B)$ that
    \[
        \AA_i(B,A\cap B,*)=0~\for 0<  i< k+1.
    \]
    Also, note that the distance from $A\setminus B$ to $B\setminus A$ is $k+2$. We can hence apply \ref{thm:blakersmassey} with $m=1$ and $n=k+1$ to obtain that the inclusion $\iota\from (B,A\cap B) \to (\hat{X},A)$ induces an isomorphism $\AA_i(B,A\cap B)\to \AA_i(\hat{X},A)$ for $1<i<k$ and a surjection for $i=k$.
    Now consider the map $\iota$ between the long exact sequences of the pairs $(B,A\cap B)$ and $(\hat{X},A)$ using that $B$ is contractible:
    \[
        \begin{tikzcd}[row sep=large]
            \cdots \arrow[r]
            & 0 \arrow[r] \arrow[d, "\iota_*"]
            & \AA_{k+1}(B, A \cap B) \arrow[r, "\delta"] \arrow[d, "\iota_*"]
            & \AA_k(A \cap B) \arrow[r] \arrow[d, "\iota_*"]
            & 0 \arrow[r] \arrow[d, "\iota_*"]
            & 0 \arrow[r] \arrow[d, "\iota_*"]
            & \cdots \\
            \cdots \arrow[r]
            & \AA_{k+1}(\hat{X}) \arrow[r]
            & \AA_{k+1}(\hat{X}, A) \arrow[r, "\delta"]
            & \AA_k(A) \arrow[r]
            & \AA_k(\hat{X}) \arrow[r]
            & 0 \arrow[r]
            & \cdots
        \end{tikzcd}
    \]
    The image of the homotopy class of the identity map $[\id{\del I_l^{k+1}}]\in \AA_k(\del I_l^{k+1})\cong \AA_k(A\cap B)$ under the map $\iota_*\from \AA_k(A\cap B)\to \AA_k(A)$ is $\alpha$; therefore, since the upper boundary map is an isomorphism, the image of $\alpha$ under the inclusion $\AA_k(A)\to \AA_k(\hat{X})$ has to be trivial.
    This completes the proof of $(1)$.

    Consider a generating set $I$ of $\AA_n(X,*_X)$. Using the axiom of choice, we choose a representative for every $\alpha\in I$ and denote by $X_\alpha$ the graph obtained by attaching a cone to $X$ along this representative of $\alpha$ as in $(1)$. Let
    \[\mathcal{Y}\letbe \left\{X_J=X\cup \bigcup_{\alpha\in J} X_\alpha \,\middle| \, J\subseteq I \text{ with }\#J<\infty \right\}~\text{ and }~ \hat{X}\letbe\bigcup_{\alpha\in I} X_\alpha\]

    For all $Y\in \mathcal{Y}$, since $Y$ is obtained from $X$ by attaching a finite number of cones to $X$, the inclusion $X\hookrightarrow Y$ is an $n$-equivalence. We will show that also the inclusion $\iota\from X\to \hat{X}$ is an $n$-equivalence.

    Consider a representative $f$ of a homotopyclass $\beta\in \AA_k(\hat{X},*_X)$ with $k\leq n$. Since $f$ has finite image, $f$ must be contained in some $Y\in \mathcal{Y}$. The inclusion $\iota\from X\to Y$ is an $n$-equivalence for all $Y\in \mathcal{Y}$, therefore $\beta$ lies in the image of $\iota_k:\AA_k(X,*)\to \AA_k(\hat{X},*)$, thus $\iota_k$ is surjective for all $1\leq k\leq n$. Now let $k<n$. If $\iota(\beta)$ gets killed in $\hat{X}$, then it has to be killed in a $Y\in \mathcal{Y}$, since every a.e. constant homotopy has finite image. Again, since the inclusion $X\to Y$ is an $n$-equivalence, $\beta$ has to be trivial to begin with. We have hence shown that $\iota$ is an $n$-equivalence.

    Lastly, the inclusion $\iota_n$ factors, for every generator $\alpha\in I$ as
    \[
        X\hookrightarrow X_\alpha \hookrightarrow \hat{X}.
    \]
    Since $\alpha$ gets killed in $X_\alpha$, it follows that $\iota_n(\alpha)$ is trivial and therefore, since $\iota_n$ is surjective, $\AA_n(\hat{X},*)=0$.

\end{proof}
\begin{theorem}[partial Postnikov systems]\label{thm:postnikov}
    Let $(X,*_X)$ be a connected based graph and $n\geq 1$. Then there is a sequence of based graphs $(X_j,*_{X_j})$, $0\leq j\leq n$ and based graph maps $p_j:(X_j,*_{X_j})\to (X_{j-1},*_{X_{j-1}})$ and $\vf_j:X\to X_j$ fitting in a commutative diagram
    \[
        \begin{tikzcd}[row sep=large, column sep=large]
            X \arrow[d,swap, "\vf_n"']
            \arrow[dr, "\vf_{n-1}"]
            \arrow[drrr, bend left=9, "\vf_1"]
            \arrow[drrrr, bend left=12, "\vf_0"] \\
            X_n \arrow[r, "p_n"]
            & X_{n-1} \arrow[r, "p_{n-1}"]
            & \cdots \arrow[r, "p_2"]
            & X_1 \arrow[r, "p_1"]
            & X_0 = \{*\}
        \end{tikzcd}
    \]
    such that
    \begin{enumerate}
        \item $\vf_j$ induces an isomorphism $\AA_i(X)\to \AA_i(X_j)$ for all $0 \leq i\leq j\leq n$ and
        \item $\AA_i(X_j,*_{X_j})=0$ for all $i>j$.
        \item The maps $p_j: (X_j,*_{X_j})\to (X_{j-1},*_{X_{j-1}})$ are fibrations.
    \end{enumerate}
\end{theorem}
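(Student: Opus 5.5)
We build the tower from the top down. The first step is to construct $X_n$ by iterating Proposition~\ref{prop:killinghomotopygroups}(2). Set $Y_{n+1}\letbe X$ and, for each $k\geq n+1$, let $Y_{k+1}\letbe\hat{Y_k}$ be the graph obtained by killing $\AA_k(Y_k)$. Each inclusion $Y_k\hookrightarrow Y_{k+1}$ is a $k$-equivalence, so it is an isomorphism on $\AA_i$ for $i<k$ and kills $\AA_k$. These inclusions are inclusions of induced subgraphs, since cones are attached only along new vertices. Let $X_n\letbe\colim{} Y_k$. Reindexing, Lemma~\ref{lem:colimithomotopygroups} gives $\AA_i(Y_k)\cong\AA_i(X_n)$ for all $i<k$. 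Hence $\vf_n\from X\to X_n$ is an isomorphism on $\AA_i$ for $i\leq n$. For $i>n$, the group $\AA_i(Y_{i+1})$ is already $0$ and maps isomorphically to $\AA_i(X_n)$, so $\AA_i(X_n)=0$. This gives (1) and (2) for $j=n$.

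The second step is to obtain $X_{j-1}$ from $X_j$, using the same recipe with an extra factorization to produce fibrations. Suppose $X_j$ and $\vf_j$ are constructed. Apply the killing construction to $X_j$ starting in degree $j$, and take the colimit as above. This gives $X_j\hookrightarrow Z_{j-1}$, which is an isomorphism on $\AA_i$ for $i<j$ and has $\AA_i(Z_{j-1})=0$ for $i\geq j$. Now factor the inclusion via the mapping path graph as $X_j\xrightarrow{s}E\xrightarrow{p}Z_{j-1}$, where $s$ is a weak equivalence and $p$ is a fibration. The trouble is that this replaces $X_j$ rather than $Z_{j-1}$, which would break the already-built part of the tower.

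To fix this, build the tower so that each replacement happens at the bottom. Construct all the $Z$'s first: $W_n\letbe X_n$, and $W_{j-1}$ the killing-colimit of $W_j$ in degrees $\geq j$. The result is a chain of inclusions $W_n\to W_{n-1}\to\cdots\to W_0$, with $W_0$ weakly contractible; we may replace $W_0$ by $\{*\}$, since any map to a point is a fibration. Then convert the chain into fibrations inductively from the bottom. Put $X_0=\{*\}$. Given a fibration replacement $X_{j-1}\simeq W_{j-1}$ together with a weak equivalence $W_{j-1}\to X_{j-1}$, compose $W_j\to W_{j-1}\to X_{j-1}$ and factor this composite as a weak equivalence $W_j\to E$ followed by a fibration $E\to X_{j-1}$. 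Set $X_j\letbe E$ and $p_j$ the fibration. Define $\vf_j$ as the composite $X\to W_n\to\cdots\to W_j\to X_j$. The diagram commutes strictly, because each factorization is a factorization of the very composite used. Conditions (1) and (2) transfer along the weak equivalences by two-out-of-three, and (3) holds by construction.

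The main obstacle is making the factorization step rigorous. We need the section $s\from W_j\to E_f$ to be a weak equivalence at every basepoint, which the preliminaries state. We need a basepoint in $E_f$, namely $(c_{*},*)$. We must also check that the pointedness and connectedness hypotheses of Proposition~\ref{prop:killinghomotopygroups} hold at each stage: the graphs stay connected because cones are attached to a connected graph. A further check is that Lemma~\ref{lem:colimithomotopygroups} requires induced inclusions with isomorphisms for all $i\leq k$, so the indexing in the colimit step has to be shifted by one.
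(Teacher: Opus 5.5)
Your proposal is correct and is essentially the paper's own proof. Your graphs $W_n\to W_{n-1}\to\cdots\to W_0$ are the paper's $K_n\to K_{n-1}\to\cdots\to K_0$, obtained by iterating the killing construction for homotopy groups and taking colimits via the colimit lemma. Your bottom-up step, starting from $X_0=\{*\}$ and factoring $W_j\to W_{j-1}\to X_{j-1}$ through the mapping path graph, is exactly how the paper produces the fibrations $p_j$, the weak equivalences $s_j$, and the strictly commuting maps $\vf_j$.
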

\begin{proof}
    Given $n$ and $X$ as above, we firstly construct an inclusion $\iota_{n+1}:X\to H^n_{n+1}$ using \ref{prop:killinghomotopygroups} with $\AA_{n+1}(H^n_{n+1})=0$ such that $\iota_*$ induces isomorphisms $\AA_i(H^n_{n+1})\cong \AA_i(X)$ for all $i\leq n$. From here, using \ref{prop:killinghomotopygroups} again, we inductively construct $k$-equivalences $\iota_k: H^n_{k-1}\to H^n_{k}$ for $k>n+1$ with $\AA_{k}(H^n_{k})=0$. Define
    \[
        K_n\letbe \colim{}\left(X\hookrightarrow H^n_{n+1}\hookrightarrow H^n_{n+2}\hookrightarrow \cdots \right).
    \]
    By \ref{lem:colimithomotopygroups}, the universal cocone map $\hat{\vf}_n:X\to K_n$ induces isomorphisms $\AA_i(X)\cong \AA_i(K_n)$ for all $i\leq n$, and, also by \ref{lem:colimithomotopygroups}, $\AA_i(K_n)=0$ for all $i>n$. From here, we can inductively over $j$ construct $\hat{p}_j:K_j \to K_{j-1}$ by the same procedure: Define
    \[
        K_{j-1}\letbe \colim{}\left(K_j\hookrightarrow H^{j-1}_{j}\hookrightarrow H^{j-1}_{j+1}\hookrightarrow \cdots \right),
    \]
    and $\hat{p}_j$ as the universal cocone map. However, the maps $\hat{p}_k$ are not fibrations yet. Let $X_0=\{*\}$ and let $s_0:K_0\to X_0$ be the universal map. We inductively construct $X_{j+1}$ as the mapping path graph of the composition $s_j\circ \hat{p}_{j+1}$, and $s_{j+1}:K_{j+1}\xrightarrow{\sim} E_{s_j\circ \hat{p}_{j+1}},v\mapsto(c_{s_j\circ \hat{p}_{j+1}(v)},v)$ as the weak equivalence, and $p_{j+1}$ the fibration that comes with the mapping path graph construction.
    \[
        \begin{tikzcd}
            K_n\arrow[r,"\hat{p}_n"]&K_{n-1}\arrow[r,"\hat{p}_{n-1}"]&\cdots\arrow[r] &K_j\arrow[r,"\hat{p}_j"]\arrow[d,"s_j","\sim"']&K_{j-1}\arrow[d,"s_{j-1}","\sim"']\arrow[r,"\hat{p}_{j-1}"]&\cdots\arrow[r,"\hat{p}_1"]&K_0\ar[d,"s_0","\sim"']\\
            &&& X_j\letbe E_{s_{j-1}\circ \hat{p}_{j}}\arrow[r,"p_j"]&X_{j-1}\arrow[r,"p_{j-1}"]&\cdots\arrow[r,"p_1"]&X_0
        \end{tikzcd}
    \]
    Finally, we set $\vf_n\letbe s_n\circ \hat{\vf}_n\from X\to X_n$, and for $j<n$ the maps $\vf_j:X\to X_j$ are defined as the composition of $\vf_n$ with the maps $p_{j+1}\circ \cdots \circ p_{n-1}\circ p_{n}$, finishing the conditions of the theorem.
\end{proof}
\begin{construction}[Eilenberg-MacLane graphs]\label{const:KPIN}
    Let $M$ be an Abelian group and $n\geq 2$. We construct a connected graph $K(M,n)$ such that $\AA_n(K(M,n))\cong M$ and $\AA_i(K(M,n))=0$ for all $i\neq n$.
    \begin{enumerate}
        \item Let $X = BM$ be a connected graph such that $\AA_1(X,*)\cong M$; this exists by \ref{thm:BG}.
        \item Then, consider the $(n-1)$-fold suspension $\Sigma_{n+2}^{n-1} X$,
        \item and build the partial Postnikov system of length $n$.
    \end{enumerate}
    The $n$-th piece of the tower $(\Sigma_{n+2}^{n-1} X)_n$ is a $K(M,n)$-graph.
\end{construction}
\begin{proof}
    Given $X$ as in the construction, firstly observe that, since $M$ is Abelian, $H_1(X)=M$, by the Hurewicz theorem for graphs. Iteratively applying the Mayer-Vietoris sequence we obtain that
    \[
        H_k(\Sigma_{n+2}^{n-1} X) = 0 ~\for 0< k < n \text{ and } H_n(\Sigma_{n+2}^{n-1} X)=M.
    \]
    It now follows from the Hurewicz theorem for graphs that $\AA_k(\Sigma_{n+2}^{n-1} X)=0$ for $0< k < n$ and $\AA_n(\Sigma_{n+2}^{n-1} X)\cong M$. Since the Postnikov tower kills the higher homotopy groups, the $n$-th term $(\Sigma_{n+2}^{n-1} X)_n$ is a $K(M,n)$.
\end{proof}
\begin{corollary}\label{cor:sequence_homotopy_groups}
    For every sequence of groups $M_1,M_2,\ldots$ such that $M_n$ is Abelian for all $n\geq 2$, there is a connected graph $X$ such that $\AA_n(X)\cong M_n$ for all $n\geq 1$.
\end{corollary}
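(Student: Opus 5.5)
The natural plan is to take a product of Eilenberg-MacLane graphs. Set $Y_1 \letbe BM_1$ from \ref{thm:BG}, and for $n\geq 2$ set $Y_n \letbe K(M_n,n)$ from \ref{const:KPIN}. Each $Y_n$ is connected, with $\AA_n(Y_n)\cong M_n$ and all other $A$-homotopy groups vanishing. The first question is which product to use. The box product behaves well with homotopies, since $I_\infty^k\bprod I_m$ is itself a box product. So I would use the box product together with the standard fact that, for based connected graphs, $\AA_k(Y\bprod Z)\cong \AA_k(Y)\times\AA_k(Z)$ via the two projections. This fact is not stated earlier in the paper. I would prove it directly: an a.e.c.\ map $I_\infty^k\to Y\bprod Z$ is not simply a pair of maps, but after subdividing (precomposing with a doubling map $I_\infty^k\to I_\infty^k$, which is homotopic to the identity up to the $\cat{LR}$ relations) it can be straightened. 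Alternatively, I would route it through \ref{magic}: show $|N(Y\bprod Z)|\simeq |NY|\times|NZ|$ at the level of homotopy groups by comparing cubes. The cleanest option may be the categorical product $\times$, where a graph map into $Y\times Z$ is exactly a pair of graph maps. I will use $\times$ if the homotopy relation (which uses $\bprod I_m$) is compatible with it. It is: a homotopy into $Y\times Z$ is a pair of homotopies, by the universal property. This gives $\AA_k(Y\times Z,*)\cong\AA_k(Y,*)\times\AA_k(Z,*)$ immediately, with group structures matching because concatenation is computed componentwise once a common $M$ is chosen.

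An infinite product is not suitable, since a.e.c.\ maps into it need not be eventually constant in each factor uniformly, and the colimit lemma requires finite images. I would therefore form finite products $P_N \letbe Y_1\times\cdots\times Y_N$, based at the tuple of basepoints. These come with inclusions $\iota_N\colon P_N\to P_{N+1}$, $y\mapsto (y,*_{Y_{N+1}})$. Each such map is an inclusion of an induced subgraph, because $*$ has no loop edge and edges in $\times$ between vertices with equal last coordinate come only from edges of $P_N$. By the product formula, $\AA_i(P_N)\cong\prod_{n\leq N}\AA_i(Y_n)$, which is $M_i$ for $i\leq N$ and $0$ for $i>N$. Moreover $\iota_{N*}$ is an isomorphism for all $i$, since $\AA_i(Y_{N+1})=0$ for $i\leq N$. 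The hypotheses of \ref{lem:colimithomotopygroups} therefore hold. Let $X$ be the colimit. Applying the lemma with $k=i$ gives $\AA_i(X)\cong\AA_i(P_i)\cong M_i$ for every $i\geq 1$. Connectedness of $X$ follows from the connectedness of each $P_N$, which holds because a product of connected graphs is connected.

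The main obstacle is making the product formula rigorous with the paper's conventions, in particular checking that a.e.c.\ homotopies and the $\cat{LR}_n$ identifications (or concatenation) pass componentwise. For $\times$ this is formal, via the universal property of the categorical product in $\Graph$. The one point needing care is that both components of an a.e.c.\ map into $Y\times Z$ are a.e.c., and conversely, a pair of a.e.c.\ maps gives an a.e.c.\ map. Both hold because finite unions of finite sets are finite. If some subtlety with basepoints in $\times$ arose, I would fall back on working through $|N_1(\cdot)|$ and the classical product formula for homotopy groups.
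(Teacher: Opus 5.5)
Your proposal is correct and is essentially the paper's proof. The paper also takes the colimit of the finite categorical products $X_1\times\cdots\times X_N$ of the graphs $BM_1$ and $K(M_n,n)$ along the basepoint inclusions $\mathrm{id}\times c_*$ and then applies \ref{lem:colimithomotopygroups}; the only difference is that the paper cites Carranza--Kapulkin for $\AA_k(Y\times Z)\cong\AA_k(Y)\times\AA_k(Z)$, whereas you derive it from the universal property of $\times$. One small correction: $\iota_{N*}$ is an isomorphism only for $i\leq N$, since in degree $N+1$ it is $0\to M_{N+1}$, but this is exactly what the lemma requires.
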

\begin{proof}
    Consider the sequence of based graphs $(X_1,*_1),(X_2,*_2),\ldots$ where $X_1 = BM_1$ from \ref{thm:BG} and $X_n$ is a $K(M_n,n)$ from \ref{const:KPIN} for $n\geq 2$, and consider the colimit
    \[
        X=\colim{n}\left(X_1\xrightarrow{\id{X_1}\times c_{*_2}} X_1\times X_2\xrightarrow{\id{X_1}\times\id{X_2} \times c_{*_3}} \cdots \right).
    \]
    For every pair of based graphs $(X,*_X)$ and $(Y,*_Y)$ it holds that $\AA_n(X \times Y, (*_X, *_Y)) \cong \AA_n(X, *_X) \times \AA_n(Y, *_Y)$(cf. \cite{Chris:Revisited}). Since every $X_n$ is $(n-1)$-connected, the maps $\id{X_1}\times \cdots \times \id{X_{n-1}}\times c_{*_n}$ are $(n-1)$-equivalences. By \ref{lem:colimithomotopygroups}, we can then conclude that
    \[
        \AA_n(X)=\AA_n(X_1\times \cdots \times X_n)\cong\AA_n(X_1)\times \cdots \times \AA_n(X_n)\cong M_n,
    \]
    as desired.
\end{proof}
\begin{theorem}[Whitehead Towers]\label{thm:whitehead}
    Let $(X,*_X)$ be a connected based graph. Then there exists a sequence of based graphs $(X^{(i)},*_i)$, $i\geq 0$, and graph maps $\psi_i$
    \[
        \cdots \xrightarrow{\psi_3} X^{(2)} \xrightarrow{\psi_2} X^{(1)} \xrightarrow{\psi_1} X^{(0)}=X
    \]
    such that
    \begin{enumerate}
        \item $\AA_i(X^{(j)})=0$ for $0<i\leq j$,
        \item $\psi_j$ induces isomorphisms $\AA_i(X^{(j)})\to \AA_i(X^{(j-1)})$ for all $i>j$,
        \item $\psi_j$ is a fibration with fiber $K(\AA_j(X),j-1)$.
    \end{enumerate}
\end{theorem}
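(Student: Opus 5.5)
We construct the tower inductively, using the partial Postnikov systems of Theorem~\ref{thm:postnikov} together with the homotopy fiber construction. Set $X^{(0)}=X$. Suppose $X^{(j-1)}$ has been built with $\AA_i(X^{(j-1)})=0$ for $0<i\leq j-1$ and $\AA_i(X^{(j-1)})\cong\AA_i(X)$ for $i\geq j$. This holds trivially for $j=1$, since $X$ is connected.

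Apply Theorem~\ref{thm:postnikov} to $X^{(j-1)}$ with $n=j$. This gives a based map $\vf_j\colon X^{(j-1)}\to Y_j\letbe (X^{(j-1)})_j$ with two properties: $\vf_j$ induces isomorphisms on $\AA_i$ for $i\leq j$, and $\AA_i(Y_j)=0$ for $i>j$. By the induction hypothesis, $Y_j$ is a $K(\AA_j(X),j)$-graph. Now factor $\vf_j$ through the mapping path graph,
\[
X^{(j-1)}\xrightarrow{s}E_{\vf_j}\xrightarrow{p_{\vf_j}}Y_j,
\]
and define
\[
X^{(j)}\letbe\mathrm{hofib}(\vf_j)=p_{\vf_j}^{-1}(*_{Y_j}),
\]
based at $(c_{*},*_{X^{(j-1)}})$. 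Take $\psi_j$ to be the restriction to $X^{(j)}$ of the projection $\pr{2}\colon E_{\vf_j}\to X^{(j-1)}$.

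Properties (1) and (2) come from the long exact sequence of the fibration $p_{\vf_j}$ (Theorem~\ref{thm:LESFibration}), using $E_{\vf_j}\simeq X^{(j-1)}$ through $s$:
\[
\cdots\to\AA_{i}(X^{(j)})\to\AA_i(X^{(j-1)})\xrightarrow{(\vf_j)_*}\AA_i(Y_j)\to\AA_{i-1}(X^{(j)})\to\cdots
\]
For $i>j$ the groups $\AA_i(Y_j)$ and $\AA_{i+1}(Y_j)$ vanish, so $\AA_i(X^{(j)})\to\AA_i(X^{(j-1)})$ is an isomorphism. This map is induced by $\psi_j$, because the inclusion $\mathrm{hofib}\hookrightarrow E_{\vf_j}$ followed by the equivalence $\pr{2}$ equals $\psi_j$. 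For $i\leq j$, $(\vf_j)_*$ is an isomorphism in degrees $\leq j$, so exactness forces $\AA_i(X^{(j)})=0$ for $1\leq i\leq j-1$. In degree $j$ the segment $\AA_{j+1}(Y_j)=0\to\AA_j(X^{(j)})\to\AA_j(X^{(j-1)})\hookrightarrow\AA_j(Y_j)$ also gives $\AA_j(X^{(j)})=0$. Connectedness of $X^{(j)}$ follows from the end of the sequence, $\AA_1(Y_j)\to\AA_0(X^{(j)})\to\AA_0(E)$, using surjectivity of $\AA_1$ when $j=1$ and connectedness of $Y_j$.

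For (3), we must show $\psi_j$ is a fibration whose fiber is a $K(\AA_j(X),j-1)$. I expect this to be the main obstacle. The fiber $\psi_j^{-1}(*)$ consists of pairs $(\omega,*)$ with $\omega$ a path in $Y_j$ from $*$ to $*$, so it is the based loop graph $\Omega Y_j$. Write the inclusion $\{*\}\to Y_j$ as $p_{\vf_j}$ pulled back along it. Then $\Omega Y_j$ is the fiber of the acyclic-fibration-related map $e_1$ restricted to paths starting at $*$. The path space with fixed start is weakly contractible, so its long exact sequence gives $\AA_{i}(\Omega Y_j)\cong\AA_{i+1}(Y_j)$, making $\Omega Y_j$ a $K(\AA_j(X),j-1)$.

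To see that $\psi_j$ is a fibration, I would exhibit it as a pullback of the fibration $(e_0,e_1)\colon PY_j\to Y_j\times Y_j$. Concretely, $X^{(j)}$ is the pullback of $PY_j\to Y_j\times Y_j$ along $(c_*,\vf_j)\colon X^{(j-1)}\to Y_j\times Y_j$, and fibrations are stable under pullback in the fibration category (Definition~\ref{dfn:fibrationcategory}(3)). Checking that this pullback description agrees with the concrete formula for $\mathrm{hofib}$ is a routine limit computation in $\Graph$. The delicate point is the description of $PY$ as a colimit of eventually constant paths, and I would verify it vertex by vertex.
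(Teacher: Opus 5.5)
Your proposal is correct, and for $j\geq 2$ it is the paper's inductive step: apply the partial Postnikov system to $X^{(j-1)}$, set $X^{(j)}=\mathrm{hofib}(\vf_j)$ with $\psi_j(\omega,x)=x$, and read off (1) and (2) from the long exact sequence of $p_{\vf_j}$.

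The differences are at the ends of the argument.

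\textbf{The case $j=1$.} The paper takes $X^{(1)}$ to be the universal cover. Its fiber is then an honest discrete set, identified with $\AA_1(X)$ by the monodromy lemma (\ref{lem:FundGrpCovering}), and the higher isomorphisms come from \ref{prop:UditChrisCovering}. You instead run the same homotopy-fiber step with $n=1$, which makes the construction uniform.

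\textbf{Condition (3).} The paper never checks that the covering map or the projections $\psi_n$ are fibrations, and it does not identify the fibers for $n\geq 2$. Your argument fills this in. $\psi_j$ is the pullback of $(e_0,e_1)\colon PY_j\to Y_j\times Y_j$ along $(c_*,\vf_j)$, so it is a fibration by the pullback axiom of a fibration category. Its fiber $\Omega Y_j$ is the fiber of the fibration $e_1\colon e_0^{-1}(*)\to Y_j$. You should say explicitly why $e_0^{-1}(*)$ is weakly contractible: it is the pullback of the acyclic fibration $e_0$ along $\{*\}\to Y_j$, and acyclic fibrations are stable under pullback.

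\textbf{The cost of the uniform route.} It shows up at the bottom of the sequence when $j=1$. To conclude that $\Omega Y_1$ is a $K(\AA_1(X),0)$, you need the connecting map $\AA_1(Y_1)\to\AA_0(\Omega Y_1)$ to be a bijection. Exactness of pointed sets, as stated in \ref{thm:LESFibration}, only gives surjectivity. For injectivity you need one of two things:
\begin{itemize}
\item the standard sharper exactness at that spot: two classes in $\AA_1$ of the base with the same image in $\AA_0$ of the fiber differ by an element of the image of $\AA_1$ of the total space, and that image is trivial here;
\item a direct identification of the components of $\Omega Y_1$ with based homotopy classes of loops.
\end{itemize}
For $j\geq 2$ the issue does not arise. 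There you only need the fiber to be connected, and that follows from $\AA_1(Y_j)=0$ with pointed-set exactness alone.
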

\begin{proof}
    We construct the sequence inductively. Let $X^{(0)}=X$ and let $X^{(1)}$ be the universal cover with covering map $\psi_1: X^{(1)} \to X$. Then $\psi_1$ induces an isomorphism on higher homotopy groups by \ref{prop:UditChrisCovering} and the fiber is discrete and can be endowed with a group structure isomorphic to $\AA_1(X)$ by \ref{lem:FundGrpCovering}.
    Now, assume we have constructed $(X^{(j)},\psi_j)$ as desired for all $j\leq n-1$ and consider the partial Postnikov tower for $X^{(n-1)}$:
    \[
        \begin{tikzcd}[row sep=large, column sep=large]
            X^{(n-1)} \arrow[d,swap, "\vf_n"']
            \arrow[dr, "\vf_{n-1}"]
            \arrow[drrr, bend left=9, "\vf_1"]
            \arrow[drrrr, bend left=12, "\vf_0"] \\
            \left(X^{(n-1)}\right)_{n} \arrow[r, "p_n"]
            & \left(X^{(n-1)}\right)_{n-1} \arrow[r, "p_{n-1}"]
            & \cdots \arrow[r, "p_2"]
            & \left(X^{(n-1)}\right)_1 \arrow[r, "p_1"]
            & \left(X^{(n-1)}\right)_0=\{*\}
        \end{tikzcd}
    \]
    Note that for $i<n$, $\left(X^{(n-1)}\right)_i$ is weakly contractible and $\left(X^{(n-1)}\right)_n$ is a $K(\AA_n(X),n)$.
    Let
    \begin{align*}
        X^{(n)} & \letbe\mathrm{hofib}(\vf_n)                                                                                                                                                                           \\
                & =\left\{\,(\omega,x)\in P\left(\left(X^{(n-1)}\right)_{n}\right)\times \left(X^{(n-1)}\right) \,\middle|\,\lim{m\to \infty}\omega(m) = \vf_n(x),\, \omega(0)=*_{\left(X^{(n-1)}\right)_n} \,\right\},
    \end{align*}
    the homotopy fiber of $\vf_n$. By \ref{thm:postnikov}, $\vf_n$ induces an isomorphism on homotopy groups $\AA_i(X^{(n-1)},*)\cong \AA_i(\left(X^{(n-1)}\right)_n,*)$ for $i\leq n$, it thus follows from the long exact sequence of a fibration, that
    \[
        \AA_i(X^{(n)})=\begin{cases} 0 & i\leq n \\ \AA_i\left(X\right) & i>n . \end{cases}
    \]
    Thus by setting
    \begin{align*}
        \psi_n \from X^{(n)} & \la X^{(n-1)} \\
        (\omega,x)           & \lm x
    \end{align*}
    we have constructed the $n$-th stage of the tower as desired.
\end{proof}

\bibliographystyle{alpha}
\bibliography{ref}
\end{document}

%% file: Makros.tex
\def\letbe{\coloneqq}

\def\from{\colon}

\newcommand{\colim}[1]{\underset{#1}{\operatorname{colim}~}}

\renewcommand{\lim}[1]{\underset{#1}{\operatorname{lim}}~}

\newcommand{\id}[1]{\operatorname{id}_{#1}}

\newcommand{\pr}[1]{\operatorname{pr}_{#1}}

\newcommand{\Umg}[2]{\mathcal{U}_#1(#2)}

\def \bprod{\mathrel{\square}}

\newcommand{\join}[1]{\mathrel{\star}_{#1}}

\def\im{\mathrm{im}}

\def\sk{\mathbf{sk}}

\tikzcdset{
    ar/.code={\arrow[#1]}
}

\def \la{\longrightarrow}

\def \lm{\longmapsto}

\newenvironment{tikzeq*}
{
    \begingroup
    \begin{equation*}
        \begin{tikzpicture}[baseline=(current bounding box.center)]
            }
            {
        \end{tikzpicture}
    \end{equation*}
    \endgroup
    \ignorespacesafterend
}

\tikzset
{
diagram/.style=
    {
        matrix of math nodes,
        column sep={4.3em,between origins},
        row sep={4em,between origins},
        text height=1.5ex,
        text depth=.25ex
    },
over/.style={preaction={draw=white,-,line width=6pt}},
every to/.style={font=\footnotesize},
mono/.style={right hook->},
epi/.style={-{Latex[open]}},
cof/.style={>->},
fib/.style={->>},
}

\newcommand{\fa}[1]{{\texttt{#1}}}

\def\for{\textnormal{for }}

\def\ve{\varepsilon}
\def\vf{\varphi}
\def\del{\partial}
\def\NN{\mathbb{N}}
\def\ZZ{\mathbb{Z}}

\def\one{[\mathbf{1}]}

\def\horn{\scalebox{1.15}{$\sqcap$}}

\def\cd{d}
\def\cs{s}
\def\cg{c}

\def\csd{\del}
\def\css{\sigma}
\def\csg{\Gamma}

\def\gd{\mathfrak{d}}
\def\gs{\mathfrak{s}}
\def\gg{\mathfrak{c}}

\newcommand{\cat}[1]{{\boldsymbol{\mathsf{#1}}}}

\def\Graph{\cat{Graph}}

\def\cSet{\cat{cSet}}

\def\Set{\cat{Set}}

\def\Ab{\cat{Ab}}

\def\Grp{\cat{Grp}}

\def\Top{\cat{Top}}

\DeclareFontFamily{U}{dmjhira}{}
\DeclareFontShape{U}{dmjhira}{m}{n}{ <-> dmjhira }{}

\newcommand{\E}[1]{E_{#1}}

\newcommand{\hg}[2]{\vert\horn^{#2}\vert_{#1}}

\newcommand{\hgpar}[3]{TBD}

\def\inc{\eta}

%% file: MakroThmCounter.tex
\makeatletter
\newcommand{\numberlike}[2]{%
   \expandafter\def\csname c@#1\endcsname{%
      \expandafter\csname c@#2\endcsname}%
}
\makeatother

\def\DefaultNumberTheoremWithin{section}

\theoremstyle{plain}
\newtheorem{lemma}{Lemma}
\numberwithin{lemma}{\DefaultNumberTheoremWithin}
\labelformat{lemma}{Lemma~#1}

\numberwithin{claim}{\DefaultNumberTheoremWithin}
\numberlike{claim}{lemma}
\labelformat{claim}{Claim~#1}
\newtheorem{theorem}{Theorem}
\numberwithin{theorem}{\DefaultNumberTheoremWithin}
\numberlike{theorem}{lemma}
\labelformat{theorem}{Theorem~#1}
\newtheorem{corollary}{Corollary}
\numberwithin{corollary}{\DefaultNumberTheoremWithin}
\numberlike{corollary}{lemma}
\labelformat{corollary}{Corollary~#1}
\newtheorem{construction}{Construction}
\numberwithin{construction}{\DefaultNumberTheoremWithin}
\numberlike{construction}{lemma}
\labelformat{construction}{Construction~#1}
\newtheorem{proposition}{Proposition}
\numberwithin{proposition}{\DefaultNumberTheoremWithin}
\numberlike{proposition}{lemma}
\labelformat{proposition}{Proposition~#1}

\numberwithin{conjecture}{\DefaultNumberTheoremWithin}
\numberlike{conjecture}{lemma}
\labelformat{conjecture}{Conjecture~#1}

\newtheorem*{warning*}{Warning}

\theoremstyle{definition}
\newtheorem{definition}{Definition}
\numberwithin{definition}{\DefaultNumberTheoremWithin}
\numberlike{definition}{lemma}
\labelformat{definition}{Definition~#1}

\theoremstyle{definition}

\numberwithin{question}{\DefaultNumberTheoremWithin}
\numberlike{question}{lemma}
\labelformat{question}{Question~#1}

\theoremstyle{definition}

\numberwithin{problem}{\DefaultNumberTheoremWithin}
\numberlike{problem}{lemma}
\labelformat{problem}{Problem~#1}

\theoremstyle{definition}
\newtheorem{remark}{Remark}
\numberwithin{remark}{\DefaultNumberTheoremWithin}
\numberlike{remark}{lemma}
\labelformat{remark}{Remark~#1}
\theoremstyle{remark}

\newtheorem{example}{Example}
\numberwithin{example}{\DefaultNumberTheoremWithin}
\numberlike{example}{lemma}
\labelformat{example}{Example~#1}

\labelformat{equation}{#1}
\labelformat{figure}{Figure~#1}
\labelformat{table}{Table~#1}
\labelformat{chapter}{Chapter~#1}
\labelformat{appendix}{Appendix~#1}
\labelformat{section}{Section~#1}
\labelformat{subsection}{Subsection~#1}

%% file: ref.bib
@article{Chris:Revisited,
  title   = {Cubical setting for discrete homotopy theory, revisited},
  volume  = {160},
  doi     = {10.1112/S0010437X24007486},
  number  = {12},
  journal = {Compositio Mathematica},
  author  = {Daniel Carranza and Chris Kapulkin},
  year    = {2024},
  pages   = {2856–2903}
}

@article{Udit:Fundamentalgroupoid,
  author   = {Kapulkin, Chris and Mavinkurve, Udit},
  title    = {The fundamental group in discrete homotopy theory},
  journal  = {Adv. in Appl. Math.},
  fjournal = {Advances in Applied Mathematics},
  volume   = {164},
  year     = {2025},
  pages    = {Paper No. 102838, 56},
  issn     = {0196-8858,1090-2074},
  mrclass  = {55Q99 (05C38 05C63)},
  mrnumber = {4847138},
  doi      = {10.1016/j.aam.2024.102838 },
  url      = {https://doi.org/10.1016/j.aam.2024.102838}
}

@article{Monoidalproducts,
  author     = {Kapulkin, Chris and Kershaw, Nathan},
  title      = {Closed symmetric monoidal structures on the category of
                graphs},
  journal    = {Theory Appl. Categ.},
  fjournal   = {Theory and Applications of Categories},
  volume     = {41},
  year       = {2024},
  pages      = {Paper No. 23, 760--784},
  issn       = {1201-561X},
  mrclass    = {18M05 (05C76)},
  mrnumber   = {4774716},
  mrreviewer = {Marko\ \v Zivkovi\'c}
}

@article{Helene:Homology,
  author     = {Barcelo, H\'el\`ene and Capraro, Valerio and White, Jacob A.},
  title      = {Discrete homology theory for metric spaces},
  journal    = {Bull. Lond. Math. Soc.},
  fjournal   = {Bulletin of the London Mathematical Society},
  volume     = {46},
  year       = {2014},
  number     = {5},
  pages      = {889--905},
  issn       = {0024-6093,1469-2120},
  mrclass    = {55N35 (54E35)},
  mrnumber   = {3262192},
  mrreviewer = {Barbara\ Di Fabio},
  doi        = {10.1112/blms/bdu043},
  url        = {https://doi.org/10.1112/blms/bdu043}
}

@book{Dieck:AT,
  author    = {tom Dieck, Tammo},
  title     = {Algebraic topology},
  fseries   = {EMS Textbooks in Mathematics},
  series    = {EMS Textb. Math.},
  isbn      = {978-3-03719-048-7},
  year      = {2008},
  publisher = {Z{\"u}rich: European Mathematical Society (EMS)},
  language  = {English},
  doi       = {10.4171/048},
  zbmath    = {5318519},
  zbl       = {1156.55001}
}

@article{Helene:Vanishing,
  author   = {Barcelo, H{\'e}l{\`e}ne and Greene, Curtis and Jarrah, Abdul Salam and Welker, Volkmar},
  title    = {On the vanishing of discrete singular cubical homology for graphs},
  fjournal = {SIAM Journal on Discrete Mathematics},
  journal  = {SIAM J. Discrete Math.},
  issn     = {0895-4801},
  volume   = {35},
  number   = {1},
  pages    = {35--54},
  year     = {2021},
  language = {English},
  doi      = {10.1137/20M1338484},
  zbmath   = {7325839},
  zbl      = {1459.05054}
}

@article{Helene:Foundations,
  author   = {Barcelo, H{\'e}l{\`e}ne and Kramer, Xenia and Laubenbacher, Reinhard and Weaver, Christopher},
  title    = {Foundations of a connectivity theory for simplicial complexes},
  fjournal = {Advances in Applied Mathematics},
  journal  = {Adv. Appl. Math.},
  issn     = {0196-8858},
  volume   = {26},
  number   = {2},
  pages    = {97--128},
  year     = {2001},
  language = {English},
  doi      = {10.1006/aama.2000.0710},
  zbmath   = {1598991},
  zbl      = {0984.57014}
}

@article{Lutz:Higher,
  author   = {Lutz, Bob},
  title    = {Higher discrete homotopy groups of graphs},
  fjournal = {Algebraic Combinatorics},
  journal  = {Algebr. Comb.},
  issn     = {2589-5486},
  volume   = {4},
  number   = {1},
  pages    = {69--88},
  year     = {2021},
  language = {English},
  doi      = {10.5802/alco.151},
  zbmath   = {7329421},
  zbl      = {1460.05207}
}

@article{Welker:Connections,
  author   = {Barcelo, H{\'e}l{\`e}ne and Greene, Curtis and Jarrah, Abdul Salam and Welker, Volkmar},
  title    = {Homology groups of cubical sets with connections},
  fjournal = {Applied Categorical Structures},
  journal  = {Appl. Categ. Struct.},
  issn     = {0927-2852},
  volume   = {29},
  number   = {3},
  pages    = {415--429},
  year     = {2021},
  language = {English},
  doi      = {10.1007/s10485-020-09621-x},
  zbmath   = {7379925},
  zbl      = {1476.55019}
}

@article{Kapulkin:Homotopygroups,
  author   = {Carranza, Daniel and Kapulkin, Chris},
  title    = {Homotopy groups of cubical sets},
  fjournal = {Expositiones Mathematicae},
  journal  = {Expo. Math.},
  issn     = {0723-0869},
  volume   = {41},
  number   = {4},
  note     = {Id/No 125518},
  year     = {2023},
  language = {English},
  doi      = {10.1016/j.exmath.2023.125518},
  zbmath   = {7799885},
  zbl      = {1537.55016}
}

@article{Babson:CubicalApprox,
  author   = {Babson, Eric and Barcelo, H{\'e}l{\`e}ne and de Longueville, Mark and Laubenbacher, Reinhard},
  title    = {Homotopy theory of graphs},
  fjournal = {Journal of Algebraic Combinatorics},
  journal  = {J. Algebr. Comb.},
  issn     = {0925-9899},
  volume   = {24},
  number   = {1},
  pages    = {31--44},
  year     = {2006},
  language = {English},
  doi      = {10.1007/s10801-006-9100-0},
  zbmath   = {5046867},
  zbl      = {1108.05030}
}

@misc{Chris:ntypes,
  author       = {Daniel Carranza and Chris Kapulkin},
  title        = {Discrete homotopy hypothesis for n-types},
  year         = {2026},
  howpublished = {Preprint, {arXiv}:2602.19293 [math.{AT}] (2026)},
  url          = {https://arxiv.org/abs/2602.19293},
  arxiv        = {arXiv:2602.19293}
}

@article{Hardeman:Graphcoverings,
  author   = {Hardeman Morrill, Rachel},
  title    = {The lifting properties for {A}-homotopy theory},
  fjournal = {Contributions to Discrete Mathematics},
  journal  = {Contrib. Discrete Math.},
  issn     = {1715-0868},
  volume   = {19},
  number   = {3},
  pages    = {36--67},
  year     = {2024},
  language = {English},
  doi      = {10.55016/ojs/cdm.v19i3.69171},
  zbmath   = {7947067},
  zbl      = {1551.05390}
}

@article{Milnor:BundlesI,
  author   = {Milnor, John W.},
  title    = {Construction of universal bundles. {I}},
  fjournal = {Annals of Mathematics. Second Series},
  journal  = {Ann. Math. (2)},
  issn     = {0003-486X},
  volume   = {63},
  pages    = {272--284},
  year     = {1956},
  language = {English},
  doi      = {10.2307/1969609},
  zbmath   = {3118948},
  zbl      = {0071.17302}
}

@article{Milnor:BundlesII,
  author   = {Milnor, John W.},
  title    = {Construction of universal bundles. {II}},
  fjournal = {Annals of Mathematics. Second Series},
  journal  = {Ann. Math. (2)},
  issn     = {0003-486X},
  volume   = {63},
  pages    = {430--436},
  year     = {1956},
  language = {English},
  doi      = {10.2307/1970012},
  zbmath   = {3118949},
  zbl      = {0071.17401}
}

@misc{Nathan:Red,
  title         = {Discrete homology computations by reduction to zero differentials},
  author        = {Sterling Ebel and Chris Kapulkin and Nathan Kershaw},
  year          = {2026},
  eprint        = {2608.04262},
  archiveprefix = {arXiv},
  primaryclass  = {cs.CG},
  url           = {https://arxiv.org/abs/2608.04262}
}

@misc{Ender:Higher,
  title         = {Towards fast computation of higher discrete homology},
  author        = {Jacob Ender and Chris Kapulkin},
  year          = {2026},
  eprint        = {2606.00245},
  archiveprefix = {arXiv},
  primaryclass  = {cs.CG},
  url           = {https://arxiv.org/abs/2606.00245}
}

@misc{Ender:First,
  title         = {Fast computation of the first discrete homology group},
  author        = {Jacob Ender and Chris Kapulkin},
  year          = {2025},
  eprint        = {2512.13929},
  archiveprefix = {arXiv},
  primaryclass  = {cs.CG},
  url           = {https://arxiv.org/abs/2512.13929}
}

@misc{Nathan:TDA,
  title         = {Topological data analysis using persistent discrete homology},
  author        = {Chris Kapulkin and Nathan Kershaw},
  year          = {2026},
  eprint        = {2506.15020},
  archiveprefix = {arXiv},
  primaryclass  = {math.AT},
  url           = {https://arxiv.org/abs/2506.15020}
}

@article{Atkin:II,
  title    = {An algebra for patterns on a complex, II},
  journal  = {International Journal of Man-Machine Studies},
  volume   = {8},
  number   = {5},
  pages    = {483-498},
  year     = {1976},
  issn     = {0020-7373},
  doi      = {https://doi.org/10.1016/S0020-7373(76)80015-6},
  url      = {https://www.sciencedirect.com/science/article/pii/S0020737376800156},
  author   = {R.H. Atkin}
}

@article{Atkin:I,
  title    = {An Algebra for Patterns on a Complex, I},
  journal  = {International Journal of Man-Machine Studies},
  volume   = {6},
  number   = {3},
  pages    = {285-307},
  year     = {1974},
  issn     = {0020-7373},
  doi      = {https://doi.org/10.1016/S0020-7373(74)80024-6},
  url      = {https://www.sciencedirect.com/science/article/pii/S0020737374800246},
  author   = {R.H. Atkin}
}

@article{Hepworth:Magnitude,
  author   = {Hepworth, Richard and Willerton, Simon},
  title    = {Categorifying the magnitude of a graph},
  fjournal = {Homology, Homotopy and Applications},
  journal  = {Homology Homotopy Appl.},
  issn     = {1532-0073},
  volume   = {19},
  number   = {2},
  pages    = {31--60},
  year     = {2017},
  language = {English},
  doi      = {10.4310/HHA.2017.v19.n2.a3},
  zbmath   = {6826922},
  zbl      = {1377.05088}
}

@article{Helene:Perspectives,
  author   = {Barcelo, H{\'e}l{\`e}ne and Laubenbacher, Reinhard},
  title    = {Perspectives on {{\(A\)}}-homotopy theory and its applications},
  fjournal = {Discrete Mathematics},
  journal  = {Discrete Math.},
  issn     = {0012-365X},
  volume   = {298},
  number   = {1-3},
  pages    = {39--61},
  year     = {2005},
  language = {English},
  doi      = {10.1016/j.disc.2004.03.016},
  zbmath   = {2207449},
  zbl      = {1082.37050}
}

@article{Yau:Path,
  author   = {Grigor'yan, Alexander and Lin, Yong and Muranov, Yuri and Yau, Shing-Tung},
  title    = {Homotopy theory for digraphs},
  fjournal = {Pure and Applied Mathematics Quarterly},
  journal  = {Pure Appl. Math. Q.},
  issn     = {1558-8599},
  volume   = {10},
  number   = {4},
  pages    = {619--674},
  year     = {2014},
  language = {English},
  doi      = {10.4310/PAMQ.2014.v10.n4.a2},
  zbmath   = {6435208},
  zbl      = {1312.05063}
}

@article{Dochtermann:Cross,
  author   = {Dochtermann, Anton},
  title    = {Hom complexes and homotopy theory in the category of graphs},
  fjournal = {European Journal of Combinatorics},
  journal  = {Eur. J. Comb.},
  issn     = {0195-6698},
  volume   = {30},
  number   = {2},
  pages    = {490--509},
  year     = {2009},
  language = {English},
  doi      = {10.1016/j.ejc.2008.04.009},
  zbmath   = {5523101},
  zbl      = {1167.05017}
}
